\theoremstyle{plain}
\newtheorem{thm}{Theorem}[section]
\newtheorem{cor}[thm]{Corollary}
\newtheorem{lem}[thm]{Lemma}
\newtheorem{prop}[thm]{Proposition}
\theoremstyle{definition}
\newtheorem{defn}{Definition}[section]
\theoremstyle{remark}
\newtheorem{rem}{Remark}[section]
\numberwithin{equation}{section}
\newcommand{\ra}{\rightarrow}
\newcommand{\Ra}{\Rightarrow}
\newcommand{\Lra}{\Leftrightarrow}
\begin{document}

\title[Comparison of cutoffs for Markov chains]{Comparison of cutoffs between lazy walks and Markovian semigroups}

\author[G.-Y. Chen]{Guan-Yu Chen$^1$}

\author[L. Saloff-Coste]{Laurent Saloff-Coste$^2$}

\thanks{$^1$Partially supported by NSC grant NSC100-2918-I-009-010}

\address{$^1$Department of Applied Mathematics, National Chiao Tung University, Hsinchu 300, Taiwan}
\email{gychen@math.nctu.edu.tw}

\thanks{$^2$Partially supported by NSF grant DMS-1004771}

\address{$^2$Malott Hall, Department of Mathematics, Cornell University, Ithaca, NY 14853-4201}
\email{lsc@math.cornell.edu}

\keywords{Markov chains, cutoff phenomenon}

\subjclass[2000]{60J10,60J27}

\begin{abstract}
We make a connection between the continuous time and lazy discrete time Markov chains through the comparison of cutoffs and mixing time in total variation distance. For illustration, we consider finite birth and death chains and provide a criterion on cutoffs using eigenvalues of the transition matrix.
\end{abstract}

\maketitle

\section{Introduction}\label{s-intro}

Let $\Omega$ be a countable set and $(\Omega,K,\pi)$ be an irreducible Markov chain on $\Omega$ with transition matrix $K$ and stationary distribution $\pi$. Let
\[
 H_t=e^{-t(I-K)}=\sum_{i=0}^\infty e^{-t}t^iK^i/i!
\]
be the associated semigroup which describes the corresponding natural continuous time process on $\Omega$. For $\delta\in(0,1)$, set
\begin{equation}\label{eq-lazyK}
    K_\delta=\delta I+(1-\delta) K,
\end{equation}
where $I$ is the identity matrix indexed by $\Omega$. Clearly, $K_\delta$ is similar to $K$ but with an additional holding probability depending of $\delta$. We call $K_\delta$ the $\delta$-lazy walk or $\delta$-lazy chain of $K$. It is well-known that if $K$ is irreducible with stationary distribution $\pi$, then
\[
    \lim_{m\ra\infty}K_\delta^m(x,y)=\lim_{t\ra\infty}H_t(x,y)=\pi(y),\quad\forall  x,y\in\Omega,\,\delta\in(0,1).
\]

In this paper, we consider convergence in total variation. The total variation between two probabilities $\mu,\nu$ on $\Omega$ is defined by $\|\mu-\nu\|_{\text{\tiny TV}}=\sup\{\mu(A)-\nu(A)|A\subset\Omega\}$. For any irreducible $K$ with stationary distribution $\pi$, the (maximum) total variation distance is defined by
\begin{equation}\label{eq-tv}
    d_{\text{\tiny TV}}(m)=\sup_{x\in\Omega}\|K^m(x,\cdot)-\pi\|_{\text{\tiny TV}},
\end{equation}
and the corresponding mixing time is given by
\begin{equation}\label{eq-tvmix}
    T_{\text{\tiny TV}}(\epsilon)=\inf\{m\ge 0|d_{\text{\tiny TV}}(m)\le\epsilon\}.
\end{equation}
We write the total variation distance and mixing time as $d_{\text{\tiny TV}}^{(c)},T_{\text{\tiny TV}}^{(c)}$ for the continuous semigroup and as $d_{\text{\tiny TV}}^{(\delta)},T_{\text{\tiny TV}}^{(\delta)}$ for the $\delta$-lazy walk.

A sharp transition phenomenon, known as cutoff, was introduced by Aldous and Diaconis in early 1980s. See e.g. \cite{D96cutoff,CSal08} for an introduction and a general review of cutoffs. In total variation, a family of irreducible Markov chains $(\Omega_n,K_n,\pi_n)_{n=1}^\infty$ is said to present a cutoff if
\begin{equation}\label{eq-cutmix}
    \lim_{n\ra\infty}\frac{T_{n,\text{\tiny TV}}(\epsilon)}{T_{n,\text{\tiny TV}}(\eta)}=1,\quad\forall 0<\epsilon<\eta<1.
\end{equation}
The family is said to present a $(t_n,b_n)$ cutoff if $b_n=o(t_n)$ and
\[
 |T_{n,\text{\tiny TV}}(\epsilon)-t_n|=O(b_n),\quad\forall 0<\epsilon<1.
\]
The cutoff for the associated continuous semigroups is defined in a similar way.
This paper contains the following general result.
\begin{thm}\label{t-main}
Consider a family of irreducible and positive recurrent Markov chains $\mathcal{F}=\{(\Omega_n,K_n,\pi_n)|n=1,2,...\}$. For $\delta\in(0,1)$, let $\mathcal{F}_\delta$ be the family of associated $\delta$-lazy walks and let $\mathcal{F}_c$ be the family of associated continuous semigroups. Suppose $T_{n,\textnormal{\tiny TV}}^{(c)}(\epsilon_0)\ra\infty$ for some $\epsilon_0\in(0,1)$. Then, the following are equivalent.
\begin{itemize}
\item[(1)] $\mathcal{F}_\delta$ has a cutoff in total variation.

\item[(2)] $\mathcal{F}_c$ has a cutoff in total variation.
\end{itemize}
Furthermore, if $\mathcal{F}_c$ has a cutoff, then
\[
    \lim_{n\ra\infty}\frac{T_{n,\textnormal{\tiny TV}}^{(c)}(\epsilon)}{T_{n,\textnormal{\tiny TV}}^{(\delta)}(\epsilon)}=1-\delta,\quad\forall\epsilon\in(0,1).
\]
\end{thm}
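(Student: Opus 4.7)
The plan is to exploit the factorization $I-K=\frac{1}{1-\delta}(I-K_\delta)$, which realizes the continuous semigroup as a Poisson mixture over powers of the lazy kernel:
\[
 H_t = e^{-t(I-K)} = e^{-\frac{t}{1-\delta}(I-K_\delta)} = \sum_{k\geq 0} e^{-s}\frac{s^k}{k!}\,K_\delta^k = \mathbb{E}\bigl[K_\delta^{M_s}\bigr],
\]
where $s=t/(1-\delta)$ and $M_s\sim\mathrm{Poisson}(s)$. This reduces the comparison of the two families to a comparison of the deterministic lazy kernel at step $m$ with its Poissonization at rate $m$.

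Writing $f_\delta(m):=d^{(\delta)}_{\text{TV}}(m)$ and $h(t):=d^{(c)}_{\text{TV}}(t)$, I would apply Jensen's inequality to the identity above, together with the monotonicity of $m\mapsto f_\delta(m)$ (from the contraction of total variation by a Markov kernel, valid for the aperiodic lazy chain) and the standard Chernoff tail bound $P(M_s < s-a\sqrt{s}) \leq e^{-ca^2}$, to obtain the central comparison
\[
 h\bigl((1-\delta)s\bigr) \leq \mathbb{E}\bigl[f_\delta(M_s)\bigr] \leq f_\delta\bigl(\lceil s-a\sqrt{s}\,\rceil\bigr) + e^{-ca^2} \qquad (a>0).
\]
Translating this into mixing-time language yields two equivalent forms, the upper bound
\[
 T^{(c)}_{n,\text{TV}}(\epsilon) \leq (1-\delta)\Bigl(T^{(\delta)}_{n,\text{TV}}(\epsilon/2) + a\sqrt{T^{(\delta)}_{n,\text{TV}}(\epsilon/2)}\Bigr)
\]
and its contrapositive
\[
 T^{(\delta)}_{n,\text{TV}}(\epsilon) \geq \frac{T^{(c)}_{n,\text{TV}}(\epsilon + e^{-ca^2})}{1-\delta} - a\sqrt{\frac{T^{(c)}_{n,\text{TV}}(\epsilon + e^{-ca^2})}{1-\delta}}.
\]

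Under the assumption $T^{(c)}_{n,\text{TV}}(\epsilon_0)\to\infty$, the $\sqrt{\cdot}$ corrections are negligible compared to the main terms. Assuming the cutoff of $\mathcal F_c$ at $t_n$ and choosing $a$ large with $e^{-ca^2}<1-\epsilon$, the second display gives $T^{(\delta)}_{n,\text{TV}}(\epsilon) \geq t_n/(1-\delta)\cdot(1-o(1))$ for every $\epsilon\in(0,1)$. Symmetrically, assuming the cutoff of $\mathcal F_\delta$ at $m_n$, the first display gives $T^{(c)}_{n,\text{TV}}(\epsilon) \leq (1-\delta)m_n\cdot(1+o(1))$ for every $\epsilon\in(0,1)$. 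Together these half-bounds would identify the scaling factor as $1-\delta$.

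The main obstacle is that the Poissonization inequality is intrinsically one-sided: it relates $h$ above to $f_\delta$ at a slightly earlier step, and equivalently $f_\delta$ below to $h$ at a slightly later time, but does not directly yield an upper bound on $f_\delta$ in terms of $h$, nor a lower bound on $h$ in terms of $f_\delta$. Closing the equivalence (1)$\Leftrightarrow$(2) therefore requires the missing ``reverse'' inequalities: an upper bound $T^{(\delta)}_{n,\text{TV}}(\epsilon)\leq t_n/(1-\delta)\cdot(1+o(1))$ when the cutoff of $\mathcal F_c$ is assumed, and a lower bound $T^{(c)}_{n,\text{TV}}(\epsilon)\geq (1-\delta)m_n\cdot(1-o(1))$ when the cutoff of $\mathcal F_\delta$ is assumed. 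I would approach these by exploiting the sharp-threshold structure of cutoff (so that the distance crosses from near $1$ to near $\epsilon$ within a window of size $o(t_n)$ around the cutoff time) together with the monotonicity of both distances and a sub-multiplicative bound of the form $f_\delta(m+n)\leq C\,f_\delta(m)f_\delta(n)$; this combinatorial propagation across the cutoff window is the delicate heart of the argument.
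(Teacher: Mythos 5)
Your first half is sound and coincides with the paper's starting point: the identity $I-K=(I-K_\delta)/(1-\delta)$ gives the one-sided comparison (the paper's inequality (3.1)), and from it you correctly extract that small lazy distance forces small continuous-time distance at time scaled by $1-\delta$ (equivalently, continuous distance near $1$ forces lazy distance near $1$ slightly earlier). But the equivalence needs the two \emph{reverse} transfers as well --- ``$d^{(c)}_{n,\text{\tiny TV}}(r_n)\to 0$ implies $d^{(\delta)}_{n,\text{\tiny TV}}(\lceil r_n/(1-\delta)+cb_n\rceil)\to 0$'' and ``$d^{(\delta)}_{n,\text{\tiny TV}}(\lfloor t_n\rfloor)\to 1$ implies $d^{(c)}_{n,\text{\tiny TV}}((1-\delta)t_n)\to 1$'' (items (2) and (4) of the paper's Proposition \ref{p-main}) --- and these you explicitly leave open. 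This is the actual heart of the theorem, and the tools you propose to close it (monotonicity, sharp-threshold structure, and submultiplicativity $f_\delta(m+n)\le C f_\delta(m)f_\delta(n)$) do not reach it. The obstruction is not decay propagation: since $H_t$ (resp.\ $K_\delta^N$) is a Poisson (resp.\ binomial) average of the \emph{same} quantities $K^m(x,A)$, and averaging can only shrink total variation, smallness of the averaged (continuous-time) distance gives no pointwise control on the lazy chain; concretely, $m\mapsto K_n^m(x_n,A_n)$ could oscillate on the scale $\sqrt{t_n}$ in such a way that the wider Poisson smoothing (variance $\asymp t_n$) washes the oscillation out while the narrower binomial smoothing (variance $\asymp \delta t_n$) does not. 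Submultiplicativity only helps after the lazy distance is already known to be small somewhere near $r_n/(1-\delta)$, which is precisely what is missing.

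The paper closes this gap by an analytic deconvolution argument rather than by chain inequalities: Lemma \ref{l-meas} extracts subsequences along which both the Poisson and the binomial mixtures of $a_{n,m}=K_n^m(x_n,A_n)$ converge, after rescaling by $\sqrt{t_n}$, to Gaussian convolutions (variances $1$ and $\delta$) of one common limit profile $f$; the hypothesis that one of the distances tends to $0$ (or $1$) at all shifts $c'\sqrt{t_n}$ makes the corresponding Gaussian smoothing constant in the translation parameter; Lemma \ref{l-conv} (a characteristic-function argument) then forces $f$ to be constant a.e., so the \emph{other} Gaussian smoothing has the same constant value, and Lemma \ref{l-cut1} converts the subsequential statement back into convergence of the full sequence. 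Some such injectivity-of-Gaussian-smoothing input (or an alternative de-Poissonization device) is indispensable; without it your argument proves only the two half-bounds you state, i.e.\ $T^{(\delta)}_{n,\text{\tiny TV}}\gtrsim t_n/(1-\delta)$ under a continuous-time cutoff and $T^{(c)}_{n,\text{\tiny TV}}\lesssim (1-\delta)m_n$ under a lazy cutoff, not the equivalence nor the limit $T^{(c)}_{n,\text{\tiny TV}}(\epsilon)/T^{(\delta)}_{n,\text{\tiny TV}}(\epsilon)\to 1-\delta$.
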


\begin{thm}\label{t-main2}
Let $\mathcal{F}$ be the family in Theorem \ref{t-main}. Assume that $t_n\ra\infty$. Then, the following are equivalent.
\begin{itemize}
\item[(1)] $\mathcal{F}_c$ has a $(t_n,b_n)$ cutoff.

\item[(2)] For $\delta\in(0,1)$, $\mathcal{F}_\delta$ has a $(t_n/(1-\delta),b_n)$ cutoff.
\end{itemize}
\end{thm}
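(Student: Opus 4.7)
The strategy is to extend the comparisons underlying Theorem \ref{t-main} to the quantitative level required by a given cutoff window $b_n$. The key representations are $K_\delta^m=\mathbb{E}[K^{B_m}]$ with $B_m\sim\text{Bin}(m,1-\delta)$ and $H_s=\mathbb{E}[K^{N_s}]$ with $N_s\sim\text{Poisson}(s)$; both have mean $(1-\delta)m$ when $s=(1-\delta)m$, so $K_\delta^m$ and $H_{(1-\delta)m}$ should mix synchronously up to an error driven by the difference between the Binomial and Poisson laws.

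For (1)$\Rightarrow$(2): assume $\mathcal{F}_c$ has a $(t_n,b_n)$ cutoff. Theorem \ref{t-main} already yields \emph{some} cutoff for $\mathcal{F}_\delta$ with $T_{n,\text{\tiny TV}}^{(\delta)}(\epsilon)\sim t_n/(1-\delta)$; the task is to upgrade this to an $O(b_n)$ estimate. The plan is to establish a two-sided quantitative comparison
\[
 d_{\text{\tiny TV}}^{(c)}((1-\delta)m+\Delta_m)-\eta_m\le d_{\text{\tiny TV}}^{(\delta)}(m)\le d_{\text{\tiny TV}}^{(c)}((1-\delta)m-\Delta_m)+\eta_m,
\]
valid at the cutoff scale. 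Specializing to $m=t_n/(1-\delta)\pm Cb_n$ and using the monotonicity and the sharp transition of $d_{\text{\tiny TV}}^{(c)}$ around $t_n$ forces $d_{\text{\tiny TV}}^{(\delta)}$ through the same dichotomy at the shifted time, producing the $(t_n/(1-\delta),b_n)$ cutoff. The reverse implication (2)$\Rightarrow$(1) is handled by running the comparison in the other direction.

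The main obstacle is the Binomial--Poisson discrepancy: $\text{Bin}(m,1-\delta)$ and $\text{Poisson}((1-\delta)m)$ have the same mean but different variances (ratio $\delta:1$), and their total variation distance is bounded away from $0$ as $m\to\infty$. A pointwise comparison of the two averaged operators therefore introduces only an $O(1)$ error. One must instead exploit the cutoff dichotomy itself: for times well separated from $t_n$, $d_{\text{\tiny TV}}^{(c)}$ is essentially constant, so averaging it over $B_m$ or $N_{(1-\delta)m}$ moves the value by a negligible amount. Calibrating this averaging argument so that the resulting error matches the target window $b_n$, rather than the intrinsic $\sqrt{m}$ binomial fluctuation, is the technical heart of the proof.
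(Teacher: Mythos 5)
Your plan correctly identifies the central obstacle (the Binomial--Poisson variance mismatch, which makes any pointwise comparison of $K_\delta^m$ with $H_{(1-\delta)m}$ cost an error of order $1$ in total variation of the randomized time, spread over a scale $\sqrt{t_n}$), but it stops exactly where the real work begins: you say the heart of the proof is to ``calibrate'' the averaging error down to the target window $b_n$, without giving any mechanism for doing so. No such calibration is possible when $b_n=o(\sqrt{t_n})$: randomizing the time by $\mathrm{Bin}(m,1-\delta)$ or $\mathrm{Poisson}((1-\delta)m)$ genuinely smears the transition over a scale $\sqrt{t_n}$, so a window of smaller order could never survive the transfer. The paper's resolution is not a sharper comparison but a structural fact that you are missing: \emph{any} total variation $(t_n,b_n)$ cutoff for these families (lazy or continuized) automatically satisfies $\sqrt{t_n}=O(b_n)$ (Theorem \ref{t-window}). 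This is proved by the Gaussian-smoothing argument of Lemmas \ref{l-meas} and \ref{l-conv}: along subsequences the quantities $K_n^i(x_n,A_n)$, averaged against the Poisson (variance $t$) or binomial (variance $\delta t$) time distributions, converge to Gaussian convolutions of a single bounded function $f$, and if these convolutions were independent of a shift of order $b_n=o(\sqrt{t_n})$ one deduces $f$ is a.e.\ constant, contradicting the cutoff. Once $\sqrt{t_n}=O(b_n)$ is known, the intrinsic $\sqrt{t_n}$ smearing is harmless, and the same two lemmas give Proposition \ref{p-main} (transfer of $d\to 0$ and $d\to 1$ at times shifted by $cb_n$ with $\sqrt{t_n}=o(b_n)$).

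Two further points would still be needed even granting the above. First, the exact window $b_n$ (rather than some window $d_n\gg b_n$, which is what Proposition \ref{p-wcomp} delivers directly) is obtained in the paper by a separate argument: setting $c_n=|T_{n,\text{\tiny TV}}^{(c)}(\epsilon)-(1-\delta)t_n|$, assuming $c_{n_l}/b_{n_l}\to\infty$ along a subsequence, and deriving a contradiction with Proposition \ref{p-wcomp} applied to the intermediate window $\sqrt{b_{n_l}c_{n_l}}$, together with the characterization of a $(t_n,b_n)$ cutoff via $|T_n(\epsilon)-t_n|=O(b_n)$ from \cite[Proposition 2.3]{CSal08}; your sketch has no substitute for this step. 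Second, your heuristic ``averaging $d_{\text{\tiny TV}}^{(c)}$ over $B_m$ or $N_{(1-\delta)m}$'' is not quite legitimate as stated: the total variation distance is a supremum over events and starting states, so only the convexity inequality $\|\mu K_\delta^m-\pi\|_{\text{\tiny TV}}\le\mathbb{E}\,\|\mu K^{B_m}-\pi\|_{\text{\tiny TV}}$ holds, not a two-sided identity; the paper circumvents this by fixing sequences $x_n,A_n$ via Lemma \ref{l-cut1} and averaging the scalar quantities $K_n^i(x_n,A_n)$ instead. As written, your proposed two-sided inequality with errors $\eta_m\to0$ and shifts $\Delta_m$ matched to $b_n$ is exactly what fails in general, so the argument has a genuine gap rather than being an alternative route.
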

We refer the readers to Theorems \ref{t-cutcomp}, \ref{t-window}, \ref{t-wcomp} and \ref{t-ini} for more detailed discussions.

For an illustration, we consider finite birth and death chains. For $n\ge 1$, let $\Omega_n=\{0,1,...,n\}$ and $K_n$ be the transition kernel of a birth and death chain on $\Omega_n$ with birth rate $p_{n,i}$, death rate $q_{n,i}$ and holding rate $r_{n,i}$, where $p_{n,n}=q_{n,0}=0$ and $p_{n,i}+q_{n,i}+r_{n,i}=1$. Suppose that $K_n$ is irreducible with stationary distribution $\pi_n$. For the family $\{(\Omega_n,K_n,\pi_n)|n=1,2,...\}$, Ding {\it et al.} \cite{DLP10} showed that, in the discrete time case, if $\inf_{i,n}r_{n,i}>0$, then the cutoff in total variation exists if and only if the product of the total variation mixing time and the spectral gap, which is defined to be the smallest non-zero eigenvalue of $I-K$, tends to infinity. There is also a similar version for the continuous time case. The next theorem is an application of the above result and Theorem \ref{t-main}, which is summarized from Theorem \ref{t-mixingtvsep3}.

\begin{thm}\label{t-bdc-cut-main}
Let $\mathcal{F}=\{(\Omega_n,K_n,\pi_n)|n=1,2,...\}$ be a family of irreducible birth and death chains as above. For $n\ge 1$, let $0,\lambda_{n,1},...,\lambda_{n,n}$ be eigenvalues of $I-K_n$ and set
\[
 \lambda_n=\min_{1\le i\le n}\lambda_{n,i},\quad s_n=\sum_{i=1}^n\lambda_{n,i}^{-1}.
\]
Then, the following are equivalent.
\begin{itemize}
\item[(1)] $\mathcal{F}_c$ has a total variation cutoff.

\item[(2)] For $\delta\in(0,1)$, $\mathcal{F}_\delta$ has a total variation cutoff.

\item[(3)] $s_n\lambda_n\ra\infty$.
\end{itemize}
\end{thm}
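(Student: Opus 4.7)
The plan is to combine Theorem~\ref{t-main} of the present paper with the product-condition and mixing-time estimates for birth-and-death chains due to Ding, Lubetzky and Peres \cite{DLP10}. The equivalence $(1)\Leftrightarrow(2)$ will follow from Theorem~\ref{t-main} once the hypothesis $T^{(c)}_{n,\textnormal{\tiny TV}}(\epsilon_0)\to\infty$ is checked, so the core of the argument is $(2)\Leftrightarrow(3)$, which I would establish by working exclusively with the $\delta$-lazy walks.

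Since $K_{n,\delta}=\delta I+(1-\delta)K_n$ has holding probability at least $\delta>0$ uniformly in $n$ and $i$, the DLP product-condition theorem applies and gives that $\mathcal{F}_\delta$ admits a TV cutoff if and only if $T^{(\delta)}_{n,\textnormal{\tiny TV}}(\epsilon)\,\widetilde{\lambda}_n\to\infty$, where $\widetilde{\lambda}_n$ is the spectral gap of $I-K_{n,\delta}$. A direct eigenvalue computation shows that the nonzero eigenvalues of $I-K_{n,\delta}$ are exactly $\{(1-\delta)\lambda_{n,i}:1\le i\le n\}$, so $\widetilde{\lambda}_n=(1-\delta)\lambda_n$. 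The second key ingredient from \cite{DLP10}, specific to birth-and-death chains, is the two-sided bound
\[
 T^{(\delta)}_{n,\textnormal{\tiny TV}}(\epsilon)\asymp\sum_{i=1}^n\frac{1}{(1-\delta)\lambda_{n,i}}=\frac{s_n}{1-\delta},
\]
with constants depending only on $\epsilon$ and $\delta$. Substituting into the product condition, the criterion for $\mathcal{F}_\delta$ reduces to $s_n\lambda_n\to\infty$, establishing $(2)\Leftrightarrow(3)$.

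Finally, to invoke Theorem~\ref{t-main} and transfer back to the continuous semigroup, I need $T^{(c)}_{n,\textnormal{\tiny TV}}(\epsilon_0)\to\infty$ for some $\epsilon_0\in(0,1)$. If $(3)$ holds, then $\lambda_{n,i}\le 2$ forces $s_n\to\infty$, and the continuous-time analogue $T^{(c)}_{n,\textnormal{\tiny TV}}(\epsilon)\asymp s_n$ of the DLP bound yields divergence; if $(1)$ holds, the same bound again makes the continuous-time mixing time diverge. Theorem~\ref{t-main} then promotes $(2)\Leftrightarrow(3)$ to the full equivalence $(1)\Leftrightarrow(2)\Leftrightarrow(3)$. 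The hard part is really the two-sided estimate $T^{(\delta)}_{\textnormal{\tiny TV}}(\epsilon)\asymp s_n/(1-\delta)$: the upper bound follows from standard spectral arguments, but the matching lower bound exploits the hitting-time structure particular to birth-and-death chains developed in \cite{DLP10}, and is precisely the place where the birth-and-death hypothesis cannot be dropped.
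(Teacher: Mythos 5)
Your overall architecture is exactly the paper's: Theorem \ref{t-bdc-cut-main} is obtained there by combining Theorem \ref{t-main} (i.e.\ Theorem \ref{t-cutcomp}) with the Ding--Lubetzky--Peres product criterion (Theorem \ref{t-bdc-cut}) and with the identification of the mixing time with $s_n$ up to constants, which is the content of Theorem \ref{t-mixingtvsep3}; your check that the hypothesis $T^{(c)}_{n,\mathrm{TV}}(\epsilon_0)\to\infty$ holds is also fine (in fact $\lambda_{n,i}\le 2$ gives $s_n\ge n/2$ unconditionally, so no case analysis on (1) or (3) is needed).

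The one soft spot is the step you yourself call the hard part: quoting from \cite{DLP10} a two-sided bound $T^{(\delta)}_{n,\mathrm{TV}}(\epsilon)\asymp s_n/(1-\delta)$ valid for \emph{every} $\delta\in(0,1)$. The eigenvalue identity behind such bounds (separation distance as the tail of a sum of independent geometric/exponential variables with parameters $\lambda_{n,i}$, transferred to total variation via (\ref{eq-septv})) is available in discrete time only under the monotonicity condition $K(i,i+1)+K(i+1,i)\le 1$, i.e.\ for $\delta\ge 1/2$; see Proposition \ref{p-tvsep} and (\ref{eq-maxsep}). For $\delta<1/2$ the eigenvalues of $K_{n,\delta}$ may be negative, the quantity entering Theorem \ref{t-bdc-cut} is the absolute gap $1-\beta_{n,\delta}$ rather than $(1-\delta)\lambda_n$ (the paper bridges this with Lemma \ref{l-gapsing}), and the paper regards the total variation bound as only \emph{implicit} in \cite{DLP10}: it actually proves $T^{(c)}_{n,\mathrm{TV}}(\epsilon)\asymp s_n$ in continuous time (and in discrete time only for $\delta\ge1/2$) and transports it to all $\delta$ through the comparison results (Theorem \ref{t-bdc-cut3}). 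The repair inside your own plan is immediate and changes nothing essential: run the lazy argument for one fixed $\delta\in[1/2,1)$, obtaining cutoff for that lazy family iff $s_n\lambda_n\to\infty$, and then apply Theorem \ref{t-main} twice (lazy $1/2$ $\leftrightarrow$ continuous $\leftrightarrow$ lazy $\delta$) to get (1) and (2) for all $\delta\in(0,1)$. With that adjustment your proof is correct and is essentially the paper's route, differing only in that you work on the lazy side and cite the $\asymp s_n$ estimate, whereas the paper derives it on the continuous side via the separation spectral representation and one-sided Chebyshev inequality.
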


The remaining of this article is organized as follows. In Section 2, the concepts of cutoffs and mixing times are introduced and fundamental results are reviewed. In Section 3, a detailed comparison of the cutoff time and window size is made between the continuous time and lazy discrete time cases, where the state space is allowed to be infinite. In Section 4, we focus on finite birth and death chains and provide a criterion on total variation cutoffs using the eigenvalues of the transition matrices.

\section{Cutoffs in total variation}

Throughout this paper, for any two sequences $s_n,t_n$ of positive numbers, we write $s_n=O(t_n)$ if there are $C>0,N>0$ such that $|s_n|\le C|t_n|$ for $n\ge N$. If $s_n=O(t_n)$ and $t_n=O(s_n)$, we write $s_n\asymp t_n$. If $t_n/s_n\ra 1$ as $n\ra\infty$, we write $t_n\sim s_n$.

Consider the following definitions.
\begin{defn}\label{d-cutoff}
Referring to the notation in (\ref{eq-tv}), a family $\mathcal{F}=\{(\Omega_n,K_n,\pi_n)|n=1,2,...\}$ is said to present a total variation
\begin{itemize}
\item[(1)] precutoff if there is a sequence $t_n$ and $B>A>0$ such that
\[
    \lim_{n\ra\infty}d_{n,\text{\tiny TV}}(\lceil Bt_n\rceil)=0,\quad\liminf_{n\ra\infty}d_{n,\text{\tiny TV}}(\lfloor At_n\rfloor)>0.
\]

\item[(2)] cutoff if there is a sequence $t_n$ such that, for all $\epsilon>0$,
\[
    \lim_{n\ra\infty}d_{n,\text{\tiny TV}}(\lceil(1+\epsilon)t_n\rceil)=0,\quad\lim_{n\ra\infty}d_{n,\text{\tiny TV}}(\lfloor(1-\epsilon)t_n\rfloor)=1.
\]

\item[(3)] $(t_n,b_n)$ cutoff if $b_n=o(t_n)$ and
\[
    \lim_{c\ra\infty}\overline{F}(c)=0,\quad\lim_{c\ra-\infty}\underline{F}(c)=1,
\]
where
\[
    \overline{F}(c)=\limsup_{n\ra\infty}d_{n,\text{\tiny TV}}(\lceil t_n+cb_n\rceil),\quad
    \underline{F}(c)=\liminf_{n\ra\infty}d_{n,\text{\tiny TV}}(\lfloor t_n+cb_n\rfloor).
\]
\end{itemize}
\end{defn}
In definition \ref{d-cutoff}, $t_n$ is called a cutoff time and $b_n$ is called a window for $t_n$. The cutoffs for continuous semigroups is the same except the deletion of $\lceil\cdot\rceil$ and $\lfloor\cdot\rfloor$.

\begin{rem}
In Definition \ref{d-cutoff}, if $t_n\ra\infty$ (or equivalently $T_{n,\text{\tiny TV}}(\epsilon)\ra\infty$ for some $\epsilon\in(0,1)$), then the cutoff is consistent with (\ref{eq-cutmix}). This is also true for cutoffs in continuous semigroups without the assumption $t_n\ra\infty$.
\end{rem}

The following lemma characterizes the total variation convergence using specific subsequences of indices and events, which is useful in proving and disproving cutoffs.

\begin{lem}\label{l-cut1}
Consider a family of irreducible and positive recurrent Markov chains $\{(\Omega_n,K_n,\pi_n)|n=1,2,...\}$. Let $t_n$ be a sequence of nonnegative integers. Then, the following are equivalent.
\begin{itemize}
\item[(1)] $d_{n,\textnormal{\tiny TV}}(t_n)\ra 0$.

\item[(2)] For any increasing sequence of positive integers $n_k$, any $A_{n_k}\subset\Omega_{n_k}$ and any $x_{n_k}\in\Omega_{n_k}$, there is a subsequence $m_k$ such that
\[
    \lim_{k\ra\infty}\left|K_{m_k}^{t_{m_k}}(x_{m_k},A_{m_k})-\pi_{m_k}(A_{m_k})\right|=0.
\]
\end{itemize}
\end{lem}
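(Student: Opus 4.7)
The plan is to prove (1) $\Leftrightarrow$ (2) by unpacking the definition of total variation distance as a supremum over subsets and points, and then using a standard subsequence extraction argument for the converse. The implication (1) $\Ra$ (2) is essentially immediate; the content of the lemma lies in (2) $\Ra$ (1), which I would prove by contrapositive.

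For (1) $\Ra$ (2), I would observe that from the definitions in (\ref{eq-tv}) and the fact that $\|\mu-\nu\|_{\text{\tiny TV}}=\sup_{A}|\mu(A)-\nu(A)|$, one has
\[
\bigl|K_n^{t_n}(x,A)-\pi_n(A)\bigr|\le d_{n,\textnormal{\tiny TV}}(t_n)
\]
for every $n$, every $x\in\Omega_n$, and every $A\subset\Omega_n$. Hence if $d_{n,\textnormal{\tiny TV}}(t_n)\ra0$, then for any choice of $n_k$, $x_{n_k}$, $A_{n_k}$, the full sequence $m_k=n_k$ already satisfies the required convergence.

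For (2) $\Ra$ (1), I would argue by contraposition. Assume $d_{n,\textnormal{\tiny TV}}(t_n)\not\ra 0$, so there exist $\alpha>0$ and an increasing sequence $n_k$ with $d_{n_k,\textnormal{\tiny TV}}(t_{n_k})>\alpha$ for all $k$. Unfolding the supremum in (\ref{eq-tv}) and the definition of the total variation norm, for each $k$ I can select $x_{n_k}\in\Omega_{n_k}$ and $A_{n_k}\subset\Omega_{n_k}$ such that
\[
\bigl|K_{n_k}^{t_{n_k}}(x_{n_k},A_{n_k})-\pi_{n_k}(A_{n_k})\bigr|>\alpha.
\]
Then along any further subsequence $m_k$ of $n_k$ the above lower bound persists, so the limit in (2) cannot be zero, contradicting (2). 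Extending $x_{n_k},A_{n_k}$ arbitrarily to indices outside $\{n_k\}$ produces the family of data to which (2) is applied.

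I do not anticipate any serious obstacle here: the only small care needed is that the total variation distance is the supremum over both states and sets, so the choices $x_{n_k}$ and $A_{n_k}$ must be made jointly to realize (up to a small slack) the supremum defining $d_{n_k,\textnormal{\tiny TV}}(t_{n_k})$. The countability of $\Omega_n$ and positive recurrence play no role beyond ensuring that $\pi_n$ and the total variation distance are well defined, so no further structural input is used in the proof.
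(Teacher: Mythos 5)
Your proof is correct and uses essentially the same idea as the paper: realize $d_{n,\textnormal{\tiny TV}}(t_n)$ (up to slack) by a choice of $x_n$ and $A_n$ and invoke the subsequence characterization of convergence; the paper phrases this directly (every subsequence of the chosen data admits a further subsequence along which the gap vanishes, forcing $d_{n,\textnormal{\tiny TV}}(t_n)\ra 0$) while you argue the contrapositive, which is the same argument.
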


\begin{proof}[Proof of Lemma \ref{l-cut1}]
(1)$\Ra$(2) is obvious. For (2)$\Ra$(1), choose $A_n\subset\Omega_n$ and $x_n\in\Omega_n$ such that $d_{n,\text{\tiny TV}}(t_n)\le 2|K_n^{t_n}(x_n,A_n)-\pi_n(A_n)|$. Let $n_k$ be an increasing sequence of positive integers and choose a subsequence $m_k$ such that
\[
    \lim_{k\ra\infty}\left|K_{m_k}^{t_{m_k}}(x_{m_k},A_{m_k})-\pi_{m_k}(A_{m_k})\right|=0.
\]
This implies $d_{m_k,\text{\tiny TV}}(t_{m_k})\ra 0$, as desired.
\end{proof}

\begin{rem}\label{r-cut}
Lemma \ref{l-cut1} also holds in continuous time under the release of $t_n$ to positive real numbers. See \cite{GY-PhD,CSal08} for further discussions on cutoffs.
\end{rem}

\section{Comparisons of cutoffs}\label{s-cutoff}

In this section, we establish the relation of cutoffs between lazy walks and continuous semigroups. Let $\Omega$ be a countable set and $K$ be a transition matrix indexed by $\Omega$. In the notation of (\ref{eq-lazyK}), the $\delta$-lazy walk evolves in accordance with
\[
    (K_\delta)^t=\sum_{i=0}^t\binom{t}{i}\delta^{t-i}(1-\delta)^iK^i,\quad\forall \delta\in(0,1),\,t\ge 0,
\]
whereas the continuous time chain follows
\[
    H_t=e^{-t(I-K)}=\sum_{i=0}^\infty\left(e^{-t}\frac{t^i}{i!}\right)K^i.
\]
Observe that $I-K=(I-K_\delta)/(1-\delta)$. This implies
\begin{equation}\label{eq-upper}
    d_{\text{\tiny TV}}^{(c)}(t)\le e^{-t/(1-\delta)}\sum_{i=0}^m\frac{[t/(1-\delta)]^i}{i!}+d_{\text{\tiny TV}}^{(\delta)}(m).
\end{equation}
Concerning the cutoff times and windows, we discuss each of them in detail.

\subsection{Cutoff times}\label{ss-cutofftime}

\begin{thm}\label{t-cutcomp}
Let $\mathcal{F}=\{(\Omega_n,K_n,\pi_n)|n=1,2,...\}$ be a family of irreducible Markov chains on countable state spaces with stationary distributions. For $\delta\in(0,1)$, let $\mathcal{F}_\delta=\{(\Omega_n,K_{n,\delta},\pi_n)|n=1,2,...\}$ and $\mathcal{F}_c=\{(\Omega_n,H_{n,t},\pi_n)|n=1,2,...\}$. Suppose there is $\epsilon_0>0$ such that $T_{n,\textnormal{\tiny TV}}^{(\delta)}(\epsilon_0)\ra\infty$ or $T_{n,\textnormal{\tiny TV}}^{(c)}(\epsilon_0)\ra\infty$. Then, the following are equivalent.
\begin{itemize}
\item[(1)] $\mathcal{F}_\delta$ has a cutoff (resp. precutoff) in total variation.

\item[(2)] $\mathcal{F}_c$ has a cutoff (resp. precutoff) in total variation.
\end{itemize}
Furthermore, if $\mathcal{F}_c$ has a cutoff, then
\[
    \lim_{n\ra\infty}\frac{T_{n,\textnormal{\tiny TV}}^{(c)}(\epsilon)}{T_{n,\textnormal{\tiny TV}}^{(\delta)}(\epsilon)}=1-\delta,\quad\forall\epsilon\in(0,1).
\]
\end{thm}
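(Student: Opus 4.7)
The proof rests on the Poisson identity
\[
H_t = e^{-s(I-K_\delta)} = \sum_{j=0}^\infty e^{-s}\frac{s^j}{j!}K_\delta^j, \qquad s=\frac{t}{1-\delta},
\]
which follows from $(1-\delta)(I-K)=I-K_\delta$ and yields (\ref{eq-upper}) at once. The quantitative engine is Poisson concentration: $\mathrm{Poi}(s)$ lies within $L\sqrt{s}$ of $s$ with probability $1-o(1)$ for any $L\to\infty$, and the hypothesis that one of $T_{n,\textnormal{\tiny TV}}^{(\delta)}(\epsilon_0)$ or $T_{n,\textnormal{\tiny TV}}^{(c)}(\epsilon_0)$ tends to infinity forces $s\to\infty$ in the relevant regime, so this window is asymptotically negligible against the mixing time itself.

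Applying (\ref{eq-upper}) with $m=T_n^{(\delta)}(\epsilon')$ and $s=m+L_n\sqrt{m}$ (with $L_n\to\infty$ slowly) kills the Poisson term and gives $d_n^{(c)}(t)\le\epsilon'+o(1)$ for $t=(1-\delta)(m+L_n\sqrt{m})$, hence
\[
\limsup_n \frac{T_n^{(c)}(\epsilon)}{T_n^{(\delta)}(\epsilon')}\le 1-\delta \qquad (\epsilon>\epsilon').
\]
A matching bound $\liminf_n T_n^{(c)}(\epsilon)/T_n^{(\delta)}(\epsilon'')\ge 1-\delta$ for $\epsilon<\epsilon''$ requires a complementary inequality of the form $d^{(\delta)}(m)\le d^{(c)}(t)+\mathrm{(Poisson\ tail)}$ for $t\approx(1-\delta)m$, which does not follow directly from (\ref{eq-upper}). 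To obtain it, one exploits that $K_\delta$ is lazy: each step is the identity with probability at least $\delta$, so consecutive iterates $K_\delta^j$ vary slowly in $j$, and the difference $K_\delta^m-H_t$ can be controlled by Poisson concentration in an $L_n\sqrt{m}$-window around $m$. Alternatively, Lemma \ref{l-cut1} offers a contradiction route: if $d^{(\delta)}(m_n)\ge\epsilon$ along a subsequence for $m_n\ge(1+\eta)t_n^\ast/(1-\delta)$, extract witnesses $(x_n,A_n)$ and deduce $|H_{t_n}(x_n,A_n)-\pi_n(A_n)|\not\to 0$ from the Poisson representation plus concentration, contradicting cutoff of $\mathcal{F}_c$.

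With both one-sided inequalities in hand, the equivalence and the ratio formula follow by squeezing. Cutoff of $\mathcal{F}_\delta$ forces $T_n^{(\delta)}(\epsilon')/T_n^{(\delta)}(\epsilon'')\to 1$ uniformly in $\epsilon',\epsilon''\in(0,1)$, so both sides of the squeeze converge to $1-\delta$, giving $T_n^{(c)}(\epsilon)/T_n^{(\delta)}(\epsilon)\to 1-\delta$ and cutoff of $\mathcal{F}_c$ at scale $(1-\delta)m_n$; conversely, cutoff of $\mathcal{F}_c$ substituted into the same pair of inequalities forces the $T_n^{(\delta)}$ ratios to converge to $1$, giving cutoff of $\mathcal{F}_\delta$. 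Precutoff follows along the same lines with constants in place of the $1-\delta$ ratio. \textbf{The main obstacle} is the reverse inequality invoked in the second step: the Poisson averaging defining $H_t$ from $K_\delta^j$ is not straightforwardly invertible, and one must use the structural input of laziness together with a careful choice of window parameter $L_n$ so that both the Poisson tails vanish and the additive $(1-\delta)L_n\sqrt{m}$ correction is of lower order than $m=T_n^{(\delta)}$, a requirement precisely met by the divergence hypothesis.
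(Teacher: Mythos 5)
Your first half is sound and matches the easy half of the paper's argument: since $I-K=(I-K_\delta)/(1-\delta)$, the semigroup at time $t$ is the lazy chain run for a Poisson$(t/(1-\delta))$ number of steps, and (\ref{eq-upper}) together with monotonicity of $d^{(\delta)}$ and Poisson concentration gives $\limsup_n T_n^{(c)}(\epsilon)/T_n^{(\delta)}(\epsilon')\le 1-\delta$. The genuine gap is exactly where you flag "the main obstacle", and neither of your two proposed repairs closes it. Route (a) fails quantitatively: laziness gives $\|K_\delta^{j+1}(x,\cdot)-K_\delta^{j}(x,\cdot)\|_{\textnormal{\tiny TV}}=O(1/\sqrt{j})$ (the binomial smoothing window has width of order $\sqrt{j}$), so over the Poisson window of width $\sqrt{m}$ the accumulated variation is $O(1)$, not $o(1)$; one cannot conclude that $K_\delta^m-H_{(1-\delta)m}$ is small, and indeed no identity of the form $K_\delta^m=\int H_t\,\mu(dt)$ exists (a binomial is underdispersed and is not a mixture of Poissons), so the averaging is not invertible by concentration alone. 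Route (b) states the right target (a contradiction via Lemma \ref{l-cut1} with witness pairs $(x_n,A_n)$) but the phrase "deduce $|H_{t_n}(x_n,A_n)-\pi_n(A_n)|\not\to0$ from the Poisson representation plus concentration" is precisely the missing step: knowing the Poisson-smoothed values converge to $\pi_n(A_n)$ at all nearby times does not by itself control the binomially-smoothed value $K_\delta^{m}(x_n,A_n)$, because both are smoothings of the same underlying sequence $m\mapsto K_n^m(x_n,A_n)$ at comparable scales.

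What the paper supplies at this point, and what your proposal lacks, is a de-Poissonization mechanism: Lemma \ref{l-meas} extracts, along subsequences, a limiting profile $f$ of the rescaled sequence $K_n^m(x_n,A_n)$ so that the lazy chain values converge to $\frac{1}{\sqrt{2\pi\delta}}\int e^{-(x-c)^2/(2\delta)}f(x)\,dx$ and the continuous-time values to $\frac{1}{\sqrt{2\pi}}\int e^{-(x-c)^2/2}f(x)\,dx$ (Gaussian smoothings of the \emph{same} $f$ with different variances), and Lemma \ref{l-conv} (injectivity of Gaussian convolution: if the convolution is constant in the shift then $f$ is constant a.e.) transfers information from one smoothing to the other; this yields all four implications of Proposition \ref{p-main}, in particular the directions "continuous $\to0$ implies lazy $\to0$ at time $r_n/(1-\delta)+cb_n$" and "lazy $\to1$ implies continuous $\to1$", which are what your squeeze needs. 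Note also that your window correction $L_n\sqrt{m}$ must be of strictly larger order than $\sqrt{m}$ (the paper requires $\sqrt{t_n}=o(b_n)$, consistent with Theorem \ref{t-window}), and that the theorem's hypothesis is only that one of $T^{(\delta)}_{n}(\epsilon_0)$, $T^{(c)}_{n}(\epsilon_0)$ diverges, so the equivalence of the two divergence statements must itself be proved (the paper does this from Corollary \ref{c-main}); your write-up implicitly assumes both. Without a substitute for Lemmas \ref{l-meas}--\ref{l-conv}, the reverse inequality, and hence the equivalence and the ratio $1-\delta$, is not established.
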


The above theorem is in fact a simple corollary of the following proposition.

\begin{prop}\label{p-main}
Let $\mathcal{F}_\delta,\mathcal{F}_c$ be families in Theorem \ref{t-cutcomp} and $t_n,r_n$ be sequences tending to infinity. Fix $\delta\in(0,1)$.
\begin{itemize}
\item[(1)] If $d_{n,\textnormal{\tiny TV}}^{(\delta)}(\lceil t_n\rceil)\ra 0$, then
\[
    \lim_{n\ra\infty}d_{n,\textnormal{\tiny TV}}^{(c)}((1-\delta)t_n+cb_n)=0,
\]
for all $c>0$ and for any sequence $b_n$ satisfying $\sqrt{t_n}=o(b_n)$.

\item[(2)] If $d_{n,\textnormal{\tiny TV}}^{(c)}(r_n)\ra 0$, then
\[
    \lim_{n\ra\infty}d_{n,\textnormal{\tiny TV}}^{(\delta)}(\lceil r_n/(1-\delta)+cb_n\rceil)=0,
\]
for all $c>0$ and for any sequence $b_n$ satisfying $\sqrt{r_n}=o(b_n)$.

\item[(3)] If $d_{n,\textnormal{\tiny TV}}^{(c)}(r_n)\ra 1$, then
\[
    \lim_{n\ra\infty}d_{n,\textnormal{\tiny TV}}^{(\delta)}(\lfloor r_n/(1-\delta)\rfloor)=1.
\]

\item[(4)] If $d_{n,\textnormal{\tiny TV}}^{(\delta)}(\lfloor t_n\rfloor)\ra 1$, then
\[
    \lim_{n\ra\infty}d_{n,\textnormal{\tiny TV}}^{(c)}((1-\delta)t_n)=1.
\]
\end{itemize}
\end{prop}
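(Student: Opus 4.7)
The plan is to exploit the Poisson-mixture identity $H_t=\sum_{i\ge 0}p_i(t)K_\delta^i$, with $p_i(t)=e^{-t/(1-\delta)}[t/(1-\delta)]^i/i!$, already used to derive (\ref{eq-upper}), and to compare it to the single term $K_\delta^m$. Writing $N_t\sim\mathrm{Poi}(t/(1-\delta))$, the natural matching is $m\leftrightarrow t/(1-\delta)$ with fluctuation of order $\sqrt{t/(1-\delta)}$. The four parts are four consequences of this matching, sorted by whether the Poisson tail is controlled by Chebyshev (the ``$\to 0$'' statements) or by the central limit theorem (the ``$\to 1$'' statements).

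For part (1) I apply (\ref{eq-upper}) with $t=(1-\delta)t_n+cb_n$ and $m=\lceil t_n\rceil$. The Poisson mean $t/(1-\delta)=t_n+cb_n/(1-\delta)$ overshoots $m$ by $\Theta(b_n)$, so since $\mathrm{Var}(N_t)\asymp t_n$ and $b_n^2\gg t_n$ Chebyshev forces $P(N_t\le m)\to 0$, and combining with $d^{(\delta)}_{n,\textnormal{\tiny TV}}(\lceil t_n\rceil)\to 0$ yields the claim. For part (3) I invoke the continuous-time analog of Lemma~\ref{l-cut1} (Remark~\ref{r-cut}) to pick $(x_n,A_n)$ with $H_{n,r_n}(x_n,A_n)\to 1$ and $\pi_n(A_n)\to 0$ (replace $A_n$ by its complement if needed). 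Setting $m_n=\lfloor r_n/(1-\delta)\rfloor$ and using monotonicity of $d^{(\delta)}_{n,\textnormal{\tiny TV}}$ to bound $K_{n,\delta}^i(x_n,A_n)\le\pi_n(A_n)+d^{(\delta)}_{n,\textnormal{\tiny TV}}(m_n)$ for $i\ge m_n$ gives
\[
H_{n,r_n}(x_n,A_n)\le P(N_{r_n}<m_n)+P(N_{r_n}\ge m_n)\bigl[\pi_n(A_n)+d^{(\delta)}_{n,\textnormal{\tiny TV}}(m_n)\bigr].
\]
Since the Poisson mean is essentially $m_n$, the CLT gives $P(N_{r_n}\ge m_n)\to 1/2$, and letting $n\to\infty$ forces $d^{(\delta)}_{n,\textnormal{\tiny TV}}(m_n)\to 1$.

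Parts (2) and (4) are the mirror statements, and the plan is to mirror the arguments above via a reverse inequality of the shape
\[
d^{(\delta)}_{n,\textnormal{\tiny TV}}(m)\le P(N_t\ge m+1)+d^{(c)}_{n,\textnormal{\tiny TV}}(t),
\]
which plays the role of (\ref{eq-upper}) in the opposite direction. Given such an inequality, part (2) follows by Chebyshev applied with $t=r_n$ and $m=\lceil r_n/(1-\delta)+cb_n\rceil$, exactly parallel to part (1), and part (4) follows by the CLT argument of part (3) applied to a witness of $d^{(\delta)}_{n,\textnormal{\tiny TV}}(\lfloor t_n\rfloor)\to 1$.

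The hard part will be proving this reverse inequality, because $K_\delta^m$ is not literally a Poisson mixture of $H_t$'s. I expect to obtain it by bounding $\|K_{n,\delta}^m(x,\cdot)-H_{n,t}(x,\cdot)\|_{\textnormal{\tiny TV}}$ by the Poisson tail, using the laziness $K_{n,\delta}(y,y)\ge\delta$ together with contraction of total variation under Markov kernels and the triangle inequality; alternatively, via a subsequence/contradiction argument in the spirit of Lemma~\ref{l-cut1}, using Poisson concentration to transfer the ``far from stationarity'' behavior between the discrete and continuous chains.
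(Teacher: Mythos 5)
Your halves (1) and (3) are fine, and they are genuinely more elementary than the paper's treatment: since $I-K_n=(I-K_{n,\delta})/(1-\delta)$, the semigroup $H_{n,t}$ \emph{is} a Poisson($t/(1-\delta)$) mixture of powers of $K_{n,\delta}$, so (\ref{eq-upper}) plus Chebyshev (and monotonicity of $m\mapsto d^{(\delta)}_{n,\textnormal{\tiny TV}}(m)$) gives (1), and reading the same mixture identity at a witnessing pair $(x_n,A_n)$ together with the CLT for the Poisson clock gives (3). The paper instead proves all four parts through the weak-limit machinery of Lemmas \ref{l-meas} and \ref{l-conv}; for these two parts your shortcut is correct.

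The gap is in (2) and (4), which is precisely the direction in which no mixture identity is available, and your proposed bridge $d^{(\delta)}_{n,\textnormal{\tiny TV}}(m)\le P(N_t\ge m+1)+d^{(c)}_{n,\textnormal{\tiny TV}}(t)$ is not just unproved but false in general. The obstruction is a variance mismatch, not a tail issue: the number of $K$-steps performed by $K_\delta^m$ is $\mathrm{Bin}(m,1-\delta)$, with variance $\delta(1-\delta)m$, while the number performed by $H_t$ is $\mathrm{Poi}(t)$, with variance $t$; at matched means $t=(1-\delta)m$ the lazy chain's step count is strictly less spread out (by the factor $\delta$), and the extra $cb_n$ steps change the mean by $o(t_n)$ only, so they cannot compensate. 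Concretely, for the chain driven by the deterministic cycle $K(i,i+1\bmod L)=1$ on $\mathbb{Z}_L$, both distances are governed by the leading Fourier coefficient of the step count mod $L$, which gives $d^{(\delta)}(m)\approx\bigl(d^{(c)}((1-\delta)m)\bigr)^{\delta}$; choosing $L,t$ so that $d^{(c)}(t)$ is small and $m=\lceil t/(1-\delta)\rceil+\lceil C\sqrt{t}\,\rceil$ makes the Poisson tail negligible while $d^{(\delta)}(m)\gg d^{(c)}(t)$, violating your inequality by an arbitrarily large ratio. (The proposition itself survives because $\epsilon^{\delta}\to0$ when $\epsilon\to0$, but that is exactly why no single-time coupling or contraction bound on $\|K_{n,\delta}^m(x,\cdot)-H_{n,t}(x,\cdot)\|_{\textnormal{\tiny TV}}$ can prove it.) Your fallback remark about a subsequence argument ``using Poisson concentration'' points toward the paper's route, but concentration alone cannot bridge the variance mismatch: the paper's proof of (2) uses that the continuous-time distance is small along a whole continuum of shifted times $r_n+cb_n+c'\sqrt{r_n}$, extracts via Lemma \ref{l-meas} a limiting profile $f$ of $m\mapsto K_n^m(x_n,A_n)$ smeared by Gaussian windows of widths $\sqrt{r_n}$ and variances $1$ and $\delta$, and then needs the deconvolution Lemma \ref{l-conv} (injectivity of Gaussian convolution) to conclude $f$ is constant before transferring to the lazy chain; for (4) one still needs the Lemma \ref{l-meas} step, with positivity of $f$ replacing deconvolution. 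As it stands, your proposal does not prove (2) or (4).
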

\begin{proof}
We prove (1), while (2) goes in a similar way and is omitted. Suppose $d_{n,\text{\tiny TV}}^{(\delta)}(\lceil t_n\rceil)\ra 0$. Since $\sqrt{t_n}=o(b_n)$, it is clear that
\begin{equation}\label{eq-cc'}
    \lim_{n\ra\infty}d_{n,\text{\tiny TV}}^{(\delta)}(\lceil t_n+cb_n+c'\sqrt{t_n}\rceil)=0,\quad\forall c>0,\,c'\in\mathbb{R}.
\end{equation}
Fix $c>0$ and let $x_n\in\Omega_n,A_n\subset\Omega_n$. Given any increasing sequence $n_l$, we may choose, according to Lemma \ref{l-meas}, a subsequence $m_l$ such that $\pi_{m_l}(A_{m_l})\ra\alpha\in[0,1]$ and, for all $c'\in\mathbb{R}$,
\[
    \lim_{l\ra\infty}K_{m_l,\delta}^{\lceil t_{m_l}+cb_{m_l}+c'\sqrt{t_{m_l}}\rceil}(x_{m_l},A_{m_l})=\frac{1}{\sqrt{2\pi\delta}}
    \int_{-\infty}^\infty e^{-(x-c')^2/(2\delta)}f(x)dx,
\]
and
\[
    \lim_{l\ra\infty}H_{m_l,(1-\delta)(t_{m_l}+cb_{m_l})}(x_{m_l},A_{m_l})=\frac{1}{\sqrt{2\pi}}
    \int_{-\infty}^\infty e^{-x^2/2}f(x)dx,
\]
where $f$ is nonnegative and bounded by $1$. By (\ref{eq-cc'}) and Lemma \ref{l-conv}, $f$ equals to $\alpha$ almost everywhere and, by Lemma \ref{l-cut1}, this implies $d_{n,\text{\tiny TV}}^{(c)}((1-\delta)t_n+cb_n)\ra 0$ as $n\ra\infty$ for all $c>0$.

The proofs for (3) and (4) are similar and we only give the details for (4). First, we choose sequences $x_n\in\Omega_n$ and $A_n\subset\Omega_n$ such that
\[
    \lim_{n\ra\infty}\pi_n(A_n)=1,\quad \lim_{n\ra\infty}K_{n,\delta}^{\lfloor t_n\rfloor}(x_n,A_n)=0.
\]
Let $n_l$ be a sequence tending to infinity. Applying Lemma \ref{l-meas} with $c=0$ and $a_{n,m}=K_n^m(x_n,A_n)$, we may choose a subsequence, say $m_l$, such that
\[
    \lim_{l\ra\infty}H_{m_l,(1-\delta)t_{m_l}}(x_{m_l},A_{m_l})
    =\frac{1}{\sqrt{2\pi}}\int_{-\infty}^\infty e^{-x^2/2}g(x)dx
\]
and
\[
    \lim_{l\ra\infty}K_{m_l,\delta}^{\lfloor t_{m_l}\rfloor}(x_{m_l},A_{m_l})
    =\frac{1}{\sqrt{2\pi\delta}}\int_{-\infty}^\infty e^{-x^2/(2\delta)}g(x)dx,
\]
where $g$ is nonnegative measurable function bounded by $1$. This leads to $g=0$ almost everywhere and
\[
    \lim_{l\ra\infty}d_{m_l,\text{\tiny TV}}^{(c)}((1-\delta)t_{m_l})=1.
\]
\end{proof}

The following is a simple corollary of Proposition \ref{p-main} (1)-(2).
\begin{cor}\label{c-main}
Let $\mathcal{F}_\delta,\mathcal{F}_c$ be families in Theorem \ref{t-cutcomp} and $t_n,r_n$ be sequences tending to infinity. Fix $\delta\in(0,1)$.
\begin{itemize}
\item[(1)] If $d_{n,\textnormal{\tiny TV}}^{(\delta)}(\lceil t_n\rceil)\ra 0$, then
\[
    \lim_{n\ra\infty}d_{n,\textnormal{\tiny TV}}^{(c)}((1+\epsilon)(1-\delta)t_n)=0,\quad\forall \epsilon>0.
\]

\item[(2)] If $d_{n,\textnormal{\tiny TV}}^{(c)}(r_n)\ra 0$, then
\[
    \lim_{n\ra\infty}d_{n,\textnormal{\tiny TV}}^{(\delta)}(\lceil(1+\epsilon)r_n/(1-\delta)\rceil)=0,\quad\forall \epsilon>0.
\]
\end{itemize}
\end{cor}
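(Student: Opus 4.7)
The plan is to deduce the corollary directly from parts (1)--(2) of Proposition \ref{p-main} by choosing the window $b_n$ to grow linearly in the cutoff time, so that the square-root condition on $b_n$ is trivially satisfied. The key observation is that, in each part of the corollary, the argument of the target distance is exactly the Proposition's shifted argument $(1-\delta)t_n+cb_n$ (resp.\ $\lceil r_n/(1-\delta)+cb_n\rceil$), with a shift that is linear in $t_n$ (resp.\ $r_n$).

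For part (1), I would decompose
\[
(1+\epsilon)(1-\delta)t_n=(1-\delta)t_n+\epsilon(1-\delta)t_n,
\]
set $c=\epsilon(1-\delta)$ and $b_n=t_n$. Since by hypothesis $t_n\to\infty$, we have $\sqrt{t_n}=o(t_n)=o(b_n)$, so Proposition \ref{p-main}(1) applies and yields $d_{n,\textnormal{\tiny TV}}^{(c)}((1+\epsilon)(1-\delta)t_n)\to 0$. As $\epsilon>0$ is arbitrary, this is exactly the conclusion of (1).

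For part (2) the argument is symmetric: write
\[
(1+\epsilon)\frac{r_n}{1-\delta}=\frac{r_n}{1-\delta}+\frac{\epsilon\, r_n}{1-\delta},
\]
set $c=\epsilon/(1-\delta)$ and $b_n=r_n$, and note that $r_n\to\infty$ gives $\sqrt{r_n}=o(b_n)$. Proposition \ref{p-main}(2) then produces $d_{n,\textnormal{\tiny TV}}^{(\delta)}(\lceil(1+\epsilon)r_n/(1-\delta)\rceil)\to 0$, which is the stated limit.

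I do not expect any genuine obstacle. The corollary is simply a multiplicative reformulation of the additive conclusions of the Proposition: when the cutoff time tends to infinity, any factor $(1+\epsilon)$ perturbation exceeds the $\sqrt{\cdot}$ window required in Proposition \ref{p-main}. The only point to verify is that the standing hypotheses $t_n,r_n\to\infty$ of the corollary guarantee $\sqrt{t_n}=o(b_n)$ for the chosen $b_n$, which is immediate from the linear choice above.
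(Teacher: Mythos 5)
Your proof is correct and takes essentially the same route as the paper, which simply derives Corollary \ref{c-main} from Proposition \ref{p-main} (1)--(2); your choices $c=\epsilon(1-\delta)$, $b_n=t_n$ (resp.\ $c=\epsilon/(1-\delta)$, $b_n=r_n$) with $\sqrt{t_n}=o(t_n)$ fill in the intended details exactly.
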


\begin{proof}[Proof of Theorem \ref{t-cutcomp}]
Set $r_n=T_{n,\text{\tiny TV}}^{(\delta)}(\epsilon_0)$ and $s_n=T_{n,\text{\tiny TV}}^{(c)}(\epsilon_0)$. Suppose $r_n\ra\infty$. By Corollary \ref{c-main} (2), if
\[
    \liminf_{n\ra\infty}d_{n,\text{\tiny TV}}^{(c)}((1-\delta)r_n/2)=0,
\]
then
\[
    \liminf_{n\ra\infty}d_{n,\text{\tiny TV}}^{(\delta)}(\lceil(1+\epsilon)r_n/2\rceil)=0,\quad\forall \epsilon>0.
\]
But, taking $\epsilon=1/2$ implies that, for $n$ large enough,
\[
    d_{n,\text{\tiny TV}}^{(\delta)}(\lceil(1+\epsilon)r_n/2\rceil)\ge d_{n,\text{\tiny TV}}^{(\delta)}(r_n-1)>\epsilon_0>0.
\]
This makes a contradiction and, hence, if $r_n\ra\infty$, then
\[
    \liminf_{n\ra\infty}d_{n,\text{\tiny TV}}^{(c)}((1-\delta)r_n/2)>0.
\]
In a similar way, if $s_n\ra\infty$, then Corollary \ref{c-main} (1) implies
\[
    \liminf_{n\ra\infty}d_{n,\text{\tiny TV}}^{(\delta)}(\lceil s_n\rceil)>0.
\]
This proves the following equivalence.
\[
    T_{n,\text{\tiny TV}}^{(\delta)}(\epsilon_0)\ra\infty\quad\text{for some }\epsilon_0>0\quad\Lra\quad
    T_{n,\text{\tiny TV}}^{(c)}(\epsilon_0)\ra\infty\quad\text{for some }\epsilon_0>0.
\]

For the equivalence of (1) and (2), the proof for precutoffs is given by Corollary \ref{c-main} (1)-(2), while the proof for cutoffs also uses Proposition \ref{p-main} (3)-(4).
\end{proof}

\subsection{Cutoff windows}\label{ss-window}

This section is devoted to the comparison of cutoff windows introduced in Definition \ref{d-cutoff}.

\begin{thm}\label{t-window}
Let $\mathcal{F}$ be a family of irreducible positive recurrent Markov chains and $\mathcal{F}_\delta,\mathcal{F}_c$ be associated families of lazy walks and continuous semigroups. Let $t_n,b_n$ be sequences of positive reals and assume that $t_n\ra\infty$. If $\mathcal{F}_\delta$ (resp. $\mathcal{F}_c$) presents a $(t_n,b_n)$ cutoff in total variation, then $\sqrt{t_n}=O(b_n)$.
\end{thm}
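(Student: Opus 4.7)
The plan is to argue by contradiction. Suppose $\mathcal{F}_c$ presents a $(t_n,b_n)$ cutoff but $\sqrt{t_n}\ne O(b_n)$; passing to a subsequence, I may assume $b_n/\sqrt{t_n}\to 0$. The strategy is to compare $d_{n,\text{\tiny TV}}^{(c)}$ at the two times $t_n+cb_n$ and $t_n-cb_n$ and show that the gap vanishes, which will contradict the definition of cutoff once $c$ is taken large.

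The key inequality comes from the Poisson mixture $H_{n,t}=\sum_m e^{-t}(t^m/m!)K_n^m$: the triangle inequality gives, for any $x\in\Omega_n$ and any $s,t\ge 0$,
\[
\|H_{n,t}(x,\cdot)-H_{n,s}(x,\cdot)\|_{\text{\tiny TV}}\le \tfrac{1}{2}\sum_m\bigl|e^{-t}t^m/m!-e^{-s}s^m/m!\bigr|,
\]
and a second triangle inequality bounds $|d_{n,\text{\tiny TV}}^{(c)}(t)-d_{n,\text{\tiny TV}}^{(c)}(s)|$ by the same quantity, which is the total variation distance between the Poisson laws of means $t$ and $s$. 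The second ingredient is the standard smoothing estimate $\|\mathrm{Pois}(\lambda)-\mathrm{Pois}(\lambda+a)\|_{\text{\tiny TV}}=O(|a|/\sqrt{\lambda})$ as $\lambda\to\infty$ with $a=o(\sqrt{\lambda})$, which follows from the local CLT or from the Hellinger identity $d_H^2(\mathrm{Pois}(\lambda),\mathrm{Pois}(\mu))=1-e^{-(\sqrt{\lambda}-\sqrt{\mu})^2/2}$ combined with $d_{\text{\tiny TV}}\le\sqrt{2}\,d_H$. Applied with $\lambda=t_n-cb_n$ and $a=2cb_n$, the hypothesis $b_n=o(\sqrt{t_n})$ yields, for each fixed $c>0$,
\[
|d_{n,\text{\tiny TV}}^{(c)}(t_n+cb_n)-d_{n,\text{\tiny TV}}^{(c)}(t_n-cb_n)|\to 0.
\]
On the other hand, the $(t_n,b_n)$ cutoff forces $\overline{F}(c)\to 0$ and $\underline{F}(-c)\to 1$ as $c\to\infty$, so for some sufficiently large $c$, $\limsup_n d_{n,\text{\tiny TV}}^{(c)}(t_n+cb_n)<1/3$ while $\liminf_n d_{n,\text{\tiny TV}}^{(c)}(t_n-cb_n)>2/3$, contradicting the display above.

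The lazy walk case is entirely parallel, with the binomial mixture $(K_{n,\delta})^m=\sum_i\binom{m}{i}\delta^{m-i}(1-\delta)^iK_n^i$ replacing the Poisson mixture and the analogous bound $\|\mathrm{Bin}(m,1-\delta)-\mathrm{Bin}(m',1-\delta)\|_{\text{\tiny TV}}=O(|m-m'|/\sqrt{m})$ (valid when $|m-m'|=o(\sqrt{m})$, again via the local CLT) playing the role of Poisson smoothing. I expect the only technical obstacle to be verifying these smoothing bounds cleanly; the rest of the argument is two triangle inequalities and the definition of the $(t_n,b_n)$ cutoff.
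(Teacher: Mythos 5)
Your argument is correct, and it reaches the conclusion by a genuinely different route than the paper. You control the time-regularity of the distance directly: the identity $H_{n,t}=\sum_m e^{-t}(t^m/m!)K_n^m$ plus two triangle inequalities give $|d_{n,\text{\tiny TV}}^{(c)}(t)-d_{n,\text{\tiny TV}}^{(c)}(s)|\le\|\mathrm{Pois}(t)-\mathrm{Pois}(s)\|_{\text{\tiny TV}}$, and the Hellinger identity for Poisson laws (or a local CLT) shows this vanishes when $|t-s|=o(\sqrt{t\wedge s})$; then the definition of a $(t_n,b_n)$ cutoff forces a gap of definite size between $d_{n,\text{\tiny TV}}^{(c)}(t_n+cb_n)$ and $d_{n,\text{\tiny TV}}^{(c)}(t_n-cb_n)$ for large fixed $c$, contradicting $b_n=o(\sqrt{t_n})$ along the chosen subsequence. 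The paper instead recycles its Lemma \ref{l-meas}: it fixes a witness $(x_n,A_n)$ with $\liminf_n|H_{n,t_n+Cb_n}(x_n,A_n)-\pi_n(A_n)|>0$, extracts a subsequence along which $H_{n,t_n+cb_n}(x_n,A_n)$ converges to the same Gaussian integral for every $c$ (because the Poisson weights at means differing by $o(\sqrt{t_n})$ are uniformly comparable on the CLT window), and contradicts the cutoff via Lemma \ref{l-cut1}. Your approach is more elementary and quantitative — it yields an explicit modulus such as $|d_{n,\text{\tiny TV}}^{(c)}(t)-d_{n,\text{\tiny TV}}^{(c)}(s)|\le\sqrt{2}\bigl(1-e^{-(\sqrt{t}-\sqrt{s})^2/2}\bigr)^{1/2}\le|\sqrt{t}-\sqrt{s}|$ — and avoids the measure-theoretic subsequence machinery, whereas the paper's lemma is shared infrastructure also needed for Proposition \ref{p-main}. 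For the lazy case your binomial smoothing bound $\|\mathrm{Bin}(m,1-\delta)-\mathrm{Bin}(m',1-\delta)\|_{\text{\tiny TV}}=O(|m-m'|/\sqrt{m})$ is indeed valid (it even holds unconditionally, e.g.\ by writing $\mathrm{Bin}(m',1-\delta)$ as a convolution and bounding the effect of a unit shift by the maximal point mass, which is $O(1/\sqrt{m})$ by unimodality), and the rounding discrepancies $\lceil\cdot\rceil,\lfloor\cdot\rfloor$ only change $|m-m'|$ by $O(1)$, which is harmless since $t_n\to\infty$; so the parallel argument goes through.
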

\begin{rem}\label{r-window}
There are examples with cutoffs but the order of any window size must be bigger than $\sqrt{t_n}$. Consider the Ehrenfest chain on $\{0,...,n\}$, which is a birth and death chain with rates $p_{n,i}=1-i/n$, $q_{n,i}=i/n$ and $r_{n,i}=0$. It is obvious that $K_n$ is irreducible and periodic with stationary distribution $\pi_n(i)=2^{-n}\binom{n}{i}$. An application of the representation theory shows that, for $0\le i\le n$, $2i/n$ is an eigenvalue of $I-K_n$. Let $\lambda_n=2/n$ and $s_n=\sum_{i=1}^nn/(2i)=\frac{1}{2}n\log n+O(n)$. By Theorem \ref{t-bdc-cut}, since $\lambda_ns_n$ tends to infinity, both $\mathcal{F}_c$ and $\mathcal{F}_\delta$ have a total variation cutoff. For a detailed computation on the total variation and the $L^2$-distance, see e.g. \cite{D88}. It is well-known that $\mathcal{F}_c$ has a $(\frac{1}{4}n\log n,n)$ total variation cutoff. By Theorem \ref{t-wcomp}, $\mathcal{F}_\delta$ has a $(\frac{n\log n}{4(1-\delta)},n)$ total variation cutoff for $\delta\in(0,1)$, which is nontrivial. For the continuous time Ehrenfest chains, Theorem \ref{t-window} says that the window size is at least $\sqrt{n\log n}$, while $n$ is the correct order.
\end{rem}

\begin{proof}[Proof of Theorem \ref{t-window}]
We prove the continuous time case. The lazy discrete time case can be treated similarly. Assume the inverse that the sequence $\sqrt{t_n}/b_n$ is not bounded. By considering the subsequence of $\sqrt{t_n}/b_n$ which tends to infinity, it loses no generality to assume that $b_n=o(\sqrt{t_n})$. According to the definition of cutoffs, we may choose $C>0$, $x_n\in\Omega_n$ and $A_n\subset\Omega_n$ such that
\[
    \liminf_{n\ra\infty}|H_{n,t_n+Cb_n}(x_n,A_n)-\pi_n(A_n)|>0.
\]
By Lemma \ref{l-meas}, one may choose a sequence $n_l$ tending to infinity such that $\pi_{n_l}(A_{n_l})$ converges to $\alpha\in[0,1]$ and
\[
    \lim_{l\ra\infty}H_{n_l,t_{n_l}+Cb_{n_l}}(x_{n_l},A_{n_l})=\frac{1}{\sqrt{2\pi}}\int_{-\infty}^\infty
    e^{-x^2/2}f(x)dx\ne\alpha,
\]
where $f$ is positive and bounded by $1$. Let $c\in\mathbb{R}$. For any $\epsilon>0$, choose $N>0$ such that, for $n\ge N$,
\[
    \left|H_{n,t_n+cb_n}(x_n,A_n)-\sum_{i:|i-t_n|\le N\sqrt{t_n}}\left(e^{-(t_n+cb_n)}\frac{(t_n+cb_n)^i}{i!}\right)
    K_n^i(x_n,A_n)\right|<\epsilon.
\]
Note that
\[
    e^{-(t_n+cb_n)}\frac{(t_n+cb_n)^i}{i!}=e^{-(t_n+Cb_n)}\frac{(t_n+Cb_n)^i}{i!}(1+o(1))\quad\text{as }n\ra\infty,
\]
where $o(1)$ is uniform for $|i-t_n|\le N\sqrt{t_n}$. This implies
\[
    \lim_{l\ra\infty}H_{n_l,t_{n_l}+cb_{n_l}}(x_{n_l},A_{n_l})=\frac{1}{\sqrt{2\pi}}\int_{-\infty}^\infty e^{-x^2/2}f(x)dx,\quad\forall c\in\mathbb{R}.
\]
Since $\mathcal{F}_c$ presents a $(t_n,b_n)$ cutoff, the right-side integral is equal to $\alpha$, a contradiction.
\end{proof}

\begin{thm}\label{t-wcomp}
Let $\mathcal{F}_\delta,\mathcal{F}_c$ be families in Theorem \ref{t-window} and $t_n\ra\infty$. Then, the following are equivalent.
\begin{itemize}
\item[(1)] $\mathcal{F}_\delta$ has a $(t_n,b_n)$ cutoff.

\item[(2)] $\mathcal{F}_c$ has a $((1-\delta)t_n,b_n)$ cutoff.
\end{itemize}
\end{thm}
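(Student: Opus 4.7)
The plan is to prove $(1)\Rightarrow(2)$; the converse follows by a symmetric argument, swapping the roles of the two chains and invoking Proposition~\ref{p-main}(2),(3) together with the analogous Gaussian comparison. Assume $\mathcal F_\delta$ has a $(t_n,b_n)$ cutoff. By Theorem~\ref{t-window}, there is $c_0>0$ with $b_n\ge c_0\sqrt{t_n}$ for all large $n$. I must verify
\[
\lim_{C\to\infty}\limsup_n d_n^{(c)}((1-\delta)t_n+Cb_n)=0\quad\text{and}\quad \lim_{C\to-\infty}\liminf_n d_n^{(c)}((1-\delta)t_n+Cb_n)=1.
\]

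For the upper half I combine inequality (\ref{eq-upper}) with Chebyshev's inequality. Given $\epsilon>0$, pick $A>0$ with $\limsup_n d_n^{(\delta)}(\lceil t_n+Ab_n\rceil)\le\epsilon/2$ and, for $A'>A$, apply (\ref{eq-upper}) with $t=(1-\delta)(t_n+A'b_n)$ and $m=\lceil t_n+Ab_n\rceil$:
\[
d_n^{(c)}((1-\delta)(t_n+A'b_n))\le P(N_n\le m)+d_n^{(\delta)}(\lceil t_n+Ab_n\rceil),\quad N_n\sim\mathrm{Poisson}(t_n+A'b_n).
\]
Chebyshev gives $P(N_n\le m)\le(t_n+A'b_n)/((A'-A)b_n-1)^2=O((A'-A)^{-2})$ using $b_n\ge c_0\sqrt{t_n}$, so taking $A'-A$ large enough forces the right-hand side below $\epsilon$. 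Setting $C=(1-\delta)A'$ completes this half.

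For the lower half, which is the main technical step, I adapt the subsequence argument from the proof of Proposition~\ref{p-main}(4) in a quantitative form. Fix $\epsilon>0$ and choose $A<0$ with $\liminf_n d_n^{(\delta)}(\lfloor t_n+Ab_n\rfloor)\ge1-\epsilon$. Saturate the total variation by $x_n,A_n$ and, after passing to a subsequence and replacing $A_n$ with its complement if necessary, assume $\pi_n(A_n)\to\alpha\ge1-\epsilon$ and $K_{n,\delta}^{\lfloor t_n+Ab_n\rfloor}(x_n,A_n)\to\beta\le\epsilon$. Applying Lemma~\ref{l-meas} to $a_{n,i}=K_n^i(x_n,A_n)$ yields a further subsequence $m_l$ and a measurable $g:\mathbb R\to[0,1]$ with
\[
\beta=\frac{1}{\sqrt{2\pi\delta}}\int_{-\infty}^{\infty}e^{-x^2/(2\delta)}g(x)\,dx,\quad \lim_l H_{m_l,(1-\delta)(t_{m_l}+Ab_{m_l})}(x_{m_l},A_{m_l})=\frac{1}{\sqrt{2\pi}}\int_{-\infty}^{\infty}e^{-x^2/2}g(x)\,dx.
\]
The linear program of maximizing $\frac{1}{\sqrt{2\pi}}\int e^{-x^2/2}g\,dx$ over $g\in[0,1]$ subject to $\frac{1}{\sqrt{2\pi\delta}}\int e^{-x^2/(2\delta)}g\,dx\le\epsilon$ is solved, by Neyman--Pearson, by the bang-bang profile $g^*=\mathbf 1_{\{|x|>M\}}$ with $M=\sqrt{\delta}\,\Phi^{-1}(1-\epsilon/2)$, giving optimal value $h_\delta(\epsilon):=2(1-\Phi(M))$. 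Because $\delta<1$, $M\to\infty$ as $\epsilon\to0$, so $h_\delta(\epsilon)\to0$. It follows that $\liminf_l d_{m_l}^{(c)}((1-\delta)(t_{m_l}+Ab_{m_l}))\ge\alpha-h_\delta(\epsilon)\ge1-\epsilon-h_\delta(\epsilon)$, and since the starting subsequence was arbitrary, $\liminf_n d_n^{(c)}((1-\delta)t_n+(1-\delta)Ab_n)\ge1-\epsilon-h_\delta(\epsilon)$. Letting $\epsilon\to 0$ (hence $(1-\delta)A\to-\infty$) completes the lower half.

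The main obstacle is the quantitative Gaussian comparison $h_\delta(\epsilon)\to0$, which hinges on $\delta<1$: the density $e^{-x^2/(2\delta)}$ decays strictly faster than $e^{-x^2/2}$, so a profile $g\in[0,1]$ with small integral against the former must live where the latter is small as well. The converse $(2)\Rightarrow(1)$ is parallel: its upper half combines Proposition~\ref{p-main}(2) with the same Chebyshev estimate, and its lower half is in fact easier because the pointwise inequality $e^{-x^2/(2\delta)}\le e^{-x^2/2}$ directly yields $\frac{1}{\sqrt{2\pi\delta}}\int e^{-x^2/(2\delta)}g\le\epsilon/\sqrt{\delta}$ whenever $\frac{1}{\sqrt{2\pi}}\int e^{-x^2/2}g\le\epsilon$, a bound that also tends to $0$ with $\epsilon$.
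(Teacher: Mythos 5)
Your argument for $(1)\Rightarrow(2)$ is essentially correct and genuinely different from the paper's: instead of reducing to the mixing-time characterization of cutoff (\cite[Proposition 2.3]{CSal08}) and the interpolation trick used in the paper's proof, you verify the definition directly, handling the upper tail by (\ref{eq-upper}) plus Chebyshev (where $\sqrt{t_n}=O(b_n)$ from Theorem \ref{t-window} is exactly what makes the Poisson tail $O((A'-A)^{-2})$), and the lower tail by a quantitative Neyman--Pearson strengthening of the paper's Lemma \ref{l-meas}/Lemma \ref{l-conv} step. That half I believe, modulo routine monotonicity remarks.

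The gap is in the converse, which you dismiss as ``parallel.'' It is not, and the asymmetry is precisely where the difficulty of the theorem lies. First, ``the same Chebyshev estimate'' is unavailable: inequality (\ref{eq-upper}) comes from writing $H_t$ as a Poisson mixture of powers of $K_\delta$ (Poisson thinning), and there is no reverse analogue, because a binomial law is not a mixture of Poisson laws (its variance is smaller than its mean); so you cannot bound $d^{(\delta)}_n(\lceil t_n+Cb_n\rceil)$ by $d^{(c)}_n$ at a nearby time plus a small tail. Second, Proposition \ref{p-main}(2) cannot substitute for it at window $b_n$: it requires $\sqrt{r_n}=o(b_n)$, whereas Theorem \ref{t-window} only guarantees $\sqrt{t_n}=O(b_n)$, and the boundary case $b_n\asymp\sqrt{t_n}$ is admissible (and occurs). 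Third, the single-time Gaussian comparison also fails in this direction: knowing $\frac{1}{\sqrt{2\pi}}\int e^{-x^2/2}g\,dx=\alpha$ does not force $\frac{1}{\sqrt{2\pi\delta}}\int e^{-x^2/(2\delta)}g\,dx$ to be near $\alpha$; taking $g=\mathbf 1_{[-a,a]}$ with $a$ the crossing point of the two densities and $\alpha=\frac{1}{\sqrt{2\pi}}\int_{-a}^{a}e^{-x^2/2}dx$, the discrepancy equals the total variation distance between $N(0,\delta)$ and $N(0,1)$, a constant. So ``continuous mixed at one time $\Rightarrow$ lazy mixed at the corresponding time'' needs the full family of shifted times $c\sqrt{t_n}$, $c\in\mathbb{R}$, together with the rigidity Lemma \ref{l-conv} (this is what Proposition \ref{p-main}(1)--(2) actually prove), and that route only yields windows strictly larger than $\sqrt{t_n}$; recovering the exact window $b_n$ then requires an additional step, e.g.\ the paper's argument via \cite[Proposition 2.3]{CSal08} and the $\sqrt{b_nc_n}$ subsequence interpolation in the proof of Theorem \ref{t-wcomp}, or alternatively a quantitative version of Lemma \ref{l-conv} (approximate constancy of the variance-$1$ smoothing on long intervals implies approximate constancy of the variance-$\delta$ smoothing), which you have not supplied. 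As written, the upper half of $(2)\Rightarrow(1)$ is unproved.
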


To prove this theorem, we need the following proposition.

\begin{prop}\label{p-wcomp}
Let $\mathcal{F}_\delta,\mathcal{F}_c$ be as in Theorem \ref{t-wcomp} and $t_n,r_n$ be sequences tending to infinity.
\begin{itemize}
\item[(1)] If $\mathcal{F}_\delta$ has a $(t_n,b_n)$ cutoff, then $\mathcal{F}_c$ has a $((1-\delta)t_n,d_n)$ cutoff for any sequence satisfying $d_n=o(t_n)$ and $b_n=o(d_n)$.

\item[(2)]  If $\mathcal{F}_c$ has a $(r_n,b_n)$ cutoff, then $\mathcal{F}_\delta$ has a $(r_n/(1-\delta),d_n)$ cutoff for any sequence satisfying $d_n=o(r_n)$ and $b_n=o(d_n)$.
\end{itemize}
\end{prop}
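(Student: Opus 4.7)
The plan is to exploit the fact that the larger window $d_n$ leaves room to absorb both the $O(\sqrt{t_n})$ Poissonian error (Theorem \ref{t-window} gives $\sqrt{t_n}=O(b_n)=o(d_n)$) and the slack needed to promote the $\limsup$-type cutoff hypotheses into the full $\to 0$ or $\to 1$ convergence that Proposition \ref{p-main} requires.

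For the upper direction of (1), fix $c>0$, set $s_n=t_n+cd_n/(1-\delta)$, and apply (\ref{eq-upper}) with $m_n=\lceil t_n+cd_n/(2(1-\delta))\rceil$. The Poisson tail is controlled by Chebyshev, $s_n/(s_n-m_n)^2\asymp t_n/d_n^2\to 0$, since $\sqrt{t_n}=o(d_n)$. Given $\eta>0$, the $(t_n,b_n)$-cutoff of $\mathcal{F}_\delta$ lets us pick $c_0$ with $\limsup_n d_{n,\text{\tiny TV}}^{(\delta)}(\lceil t_n+c_0b_n\rceil)<\eta$; since $b_n=o(d_n)$, eventually $m_n\ge\lceil t_n+c_0b_n\rceil$, so monotonicity gives $d_{n,\text{\tiny TV}}^{(\delta)}(m_n)<\eta$. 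Hence $\limsup d_{n,\text{\tiny TV}}^{(c)}((1-\delta)t_n+cd_n)\le\eta$, and $\eta\downarrow 0$ closes the upper direction. For the lower direction, write $(1-\delta)t_n+cd_n=(1-\delta)\tilde t_n$ with $\tilde t_n=t_n+cd_n/(1-\delta)$; for fixed $c<0$, $\tilde t_n=t_n+C_nb_n$ with $C_n=cd_n/((1-\delta)b_n)\to-\infty$, so monotonicity together with the lower half of the $\mathcal{F}_\delta$-cutoff gives $d_{n,\text{\tiny TV}}^{(\delta)}(\lfloor\tilde t_n\rfloor)\to 1$, and Proposition \ref{p-main}(4) delivers $d_{n,\text{\tiny TV}}^{(c)}((1-\delta)\tilde t_n)\to 1$.

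Part (2) proceeds symmetrically. Its lower direction is immediate from Proposition \ref{p-main}(3): for $\tilde c<0$, rewrite $r_n/(1-\delta)+\tilde cd_n=\tilde r_n/(1-\delta)$ with $\tilde r_n=r_n+\tilde c(1-\delta)d_n=r_n+C_nb_n$, $C_n\to-\infty$; the $\mathcal{F}_c$-cutoff and monotonicity of $d_{n,\text{\tiny TV}}^{(c)}$ in time give $d_{n,\text{\tiny TV}}^{(c)}(\tilde r_n)\to 1$, and Proposition \ref{p-main}(3) transfers this to $d_{n,\text{\tiny TV}}^{(\delta)}(\lfloor\tilde r_n/(1-\delta)\rfloor)\to 1$.

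The main obstacle is the upper direction of (2), where Proposition \ref{p-main}(2) requires $d_{n,\text{\tiny TV}}^{(c)}(r_n')\to 0$ rather than merely $\limsup<\eta$. I would resolve this by a diagonal extraction: since $\lim_{k\to\infty}\limsup_n d_{n,\text{\tiny TV}}^{(c)}(r_n+kb_n)=0$, one can pick $c_n\to\infty$ slowly enough that $c_nb_n=o(d_n)$ and $d_{n,\text{\tiny TV}}^{(c)}(r_n+c_nb_n)\to 0$. Then Proposition \ref{p-main}(2) applied to $r_n^\ast=r_n+c_nb_n$ with window $d_n$ (valid since $\sqrt{r_n^\ast}=o(d_n)$ by Theorem \ref{t-window}) gives $d_{n,\text{\tiny TV}}^{(\delta)}(\lceil r_n^\ast/(1-\delta)+c'd_n\rceil)\to 0$ for every $c'>0$, and monotonicity absorbs the $c_nb_n/(1-\delta)=o(d_n)$ shift to yield $d_{n,\text{\tiny TV}}^{(\delta)}(\lceil r_n/(1-\delta)+cd_n\rceil)\to 0$ for any fixed $c>0$.
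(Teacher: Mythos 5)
Your argument is correct, and it is essentially the route the paper intends: the paper's own proof of Proposition \ref{p-wcomp} is the one-line citation ``immediately from Theorem \ref{t-window} and Proposition \ref{p-main}'', and what you supply is precisely the bookkeeping that makes this immediate --- Theorem \ref{t-window} to get $\sqrt{t_n}=O(b_n)=o(d_n)$, Proposition \ref{p-main}(3)--(4) combined with monotonicity of the distances in time for the lower halves, and the diagonal choice $c_n\to\infty$ with $c_nb_n=o(d_n)$ that upgrades the $\limsup$-type cutoff hypothesis into the genuine convergence $d_{n,\mathrm{TV}}^{(c)}(r_n+c_nb_n)\to 0$ which Proposition \ref{p-main}(2) actually demands; this last step is genuinely needed and the paper leaves it implicit. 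The one place you deviate is the upper half of part (1): instead of running the same diagonal argument through Proposition \ref{p-main}(1), you use the explicit comparison (\ref{eq-upper}) with a Chebyshev estimate on the Poisson tail, valid because $s_n/(s_n-m_n)^2\asymp t_n/d_n^2\to 0$ once $\sqrt{t_n}=o(d_n)$. That variant is more elementary, as it bypasses the Gaussian-limit machinery of Lemmas \ref{l-meas}--\ref{l-conv} for that half, at the cost of a slight asymmetry with your treatment of part (2); either way the verification is complete, and in fact for each fixed $c>0$ (resp.\ $c<0$) you obtain the full limits $0$ (resp.\ $1$) at the $cd_n$-shifted times, which is stronger than the $(\cdot\,,d_n)$ cutoff definition requires.
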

\begin{proof}
Immediately from Theorem \ref{t-window} and Proposition \ref{p-main}.
\end{proof}

\begin{proof}[Proof of Theorem \ref{t-wcomp}]
We prove (1)$\Ra$(2), while the reasoning for (2)$\Ra$(1) is similar. Suppose that $\mathcal{F}_\delta$ has a $(t_n,b_n)$ cutoff with $t_n\ra\infty$. Fix $\epsilon\in(0,1)$ and set $c_n=|T_{n\text{\tiny TV}}^{(c)}(\epsilon)-(1-\delta)t_n|$. By \cite[Proposition 2.3]{CSal08}, it remains to show that $c_n=O(b_n)$. Assume the inverse, that is, there is a subsequence $\xi=\{n_l|l=1,2,...\}$ such that $c_{n_l}/b_{n_l}\ra\infty$ as $l\ra\infty$. Let $\mathcal{F}_\delta(\xi),\mathcal{F}_c(\xi)$ be families of $\mathcal{F}_\delta,\mathcal{F}_c$ restricted to $\xi$. This implies $\mathcal{F}_\delta(\xi)$ has a $(t_{n_l},b_{n_l})$ cutoff, but $\mathcal{F}_c(\xi)$ has no $\left((1-\delta)t_{n_l},\sqrt{b_{n_l}c_{n_l}}\right)$ cutoff, a contradiction with Proposition \ref{p-wcomp}.
\end{proof}

\subsection{Chains with specified initial states}\label{ss-initial}

For any probability $\mu$ on a countable set $\Omega$, we write $(\mu,\Omega,K,\pi)$ as an irreducible Markov chain on $\Omega$ with transition matrix $K$, stationary distribution $\pi$ and initial distribution $\mu$. The total variation distances for the associated $\delta$-lazy walk and continuous time chain are defined by
\begin{equation}\label{eq-initial}
    d_{\text{\tiny TV}}^{(\delta)}(\mu,n)=\|\mu K_\delta^n-\pi\|_{\text{\tiny TV}},\quad
    d_{\text{\tiny TV}}^{(c)}(\mu,t)=\|\mu H_t-\pi\|_{\text{\tiny TV}}.
\end{equation}
Denoted by $T_{\text{\tiny TV}}^{(\delta)}(\mu,\epsilon),T_{\text{\tiny TV}}^{(c)}(\mu,\epsilon)$ are the corresponding mixing times and the concept of cutoffs can be defined similarly as Definition \ref{d-cutoff} according to (\ref{eq-initial}). It is an easy exercise to achieve a similar version of Lemma \ref{l-cut1} for cutoffs with specified initial distributions. The proofs for Propositions \ref{p-main}-\ref{p-wcomp} and Corollary \ref{c-main} can be adapted to the case when the initial distribution is prescribed. This gives the following theorems.

\begin{thm}\label{t-ini}
Let $\mathcal{F}=\{(\mu_n,\Omega_n,K_n,\pi_n)|n=1,2,...\}$ be a family of irreducible Markov chains and  $\mathcal{F}_\delta,\mathcal{F}_c$ be families of associated $\delta$-lazy walks and continuous time chains.
\begin{itemize}
\item[(1)] $\mathcal{F}_\delta$ has a cutoff (resp. precutoff) iff $\mathcal{F}_c$ has a cutoff (resp. precutoff).

\item[(2)] If $\mathcal{F}_\delta$ has a cutoff, then $T_{n,\textnormal{\tiny TV}}^{(c)}(\mu_n,\epsilon)\sim(1-\delta)T_{n,\textnormal{\tiny TV}}^{(\delta)}(\mu_n,\epsilon)$ as $n$ tends to $\infty$ for all $\epsilon\in(0,1)$.
\end{itemize}
Let $t_n\ra\infty$ and $b_n>0$.
\begin{itemize}
\item[(3)] $\mathcal{F}_\delta$ has a $(t_n,b_n)$ cutoff iff $\mathcal{F}_c$ has a $((1-\delta)t_n,b_n)$ cutoff.

\item[(4)] If $\mathcal{F}_\delta$ has a $(t_n,b_n)$ cutoff, then $\sqrt{t_n}=O(b_n)$.
\end{itemize}
\end{thm}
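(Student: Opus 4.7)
The plan is to adapt the proofs of Propositions \ref{p-main} and \ref{p-wcomp}, Corollary \ref{c-main}, and Theorems \ref{t-cutcomp}, \ref{t-window}, \ref{t-wcomp} by replacing the worst-case quantity $d_{n,\text{\tiny TV}}(m)=\sup_x\|K_n^m(x,\cdot)-\pi_n\|_{\text{\tiny TV}}$ with the prescribed-start quantity $d_{n,\text{\tiny TV}}(\mu_n,m)=\|\mu_n K_n^m-\pi_n\|_{\text{\tiny TV}}$ throughout. The only preparatory step is a version of Lemma \ref{l-cut1} for fixed initial distributions: one checks that $d_{n,\text{\tiny TV}}(\mu_n,t_n)\ra 0$ is equivalent to the statement that, for every increasing sequence $n_k$ and every choice of events $A_{n_k}\subset\Omega_{n_k}$, there is a subsequence $m_k$ along which $|\mu_{m_k}K_{m_k}^{t_{m_k}}(A_{m_k})-\pi_{m_k}(A_{m_k})|\ra 0$. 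This is actually easier than Lemma \ref{l-cut1}, since $\|\mu_n K_n^m-\pi_n\|_{\text{\tiny TV}}=\sup_A(\mu_n K_n^m(A)-\pi_n(A))$ is realized exactly by a single event, without the factor-of-$2$ slack used in the sup-over-starting-states version.

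With this adapted characterization in hand, the proof of Proposition \ref{p-main} goes through verbatim: the Poisson/Gaussian subsequence-extraction device (Lemma \ref{l-meas}) is applied to the bounded sequence $a_{n,m}=\mu_n K_n^m(A_n)\in[0,1]$ instead of to $K_n^m(x_n,A_n)$, and nothing else changes. Corollary \ref{c-main} then follows from the same one-line deductions. Parts (1) and (2) of Theorem \ref{t-ini} now come out of the argument of Theorem \ref{t-cutcomp}: Corollary \ref{c-main} provides the precutoff equivalence and the asymptotic comparison $T_{n,\text{\tiny TV}}^{(c)}(\mu_n,\epsilon)\sim(1-\delta)T_{n,\text{\tiny TV}}^{(\delta)}(\mu_n,\epsilon)$, while parts (3)--(4) of Proposition \ref{p-main} upgrade the precutoff conclusion to a cutoff conclusion.

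Part (4) is proved exactly as Theorem \ref{t-window}, again using Lemma \ref{l-meas} applied to $\mu_n H_{n,t_n+cb_n}(A_n)$: assuming $b_n=o(\sqrt{t_n})$, the Poisson weights at offsets $Cb_n$ and $cb_n$ from $t_n$ differ by a factor $1+o(1)$ uniformly on the essential range $|i-t_n|\le N\sqrt{t_n}$, so the limits obtained at every offset $c\in\mathbb{R}$ coincide, contradicting the $(t_n,b_n)$ cutoff. Finally, part (3) follows from the adapted Proposition \ref{p-wcomp}, whose proof is the same two-line combination of Proposition \ref{p-main} with part (4), followed by the subsequence-contradiction argument of Theorem \ref{t-wcomp}.

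There is no real obstacle here: the whole exercise is bookkeeping rather than a conceptual extension. The only point meriting a quick sanity check is the adapted Lemma \ref{l-cut1}, where one must confirm that fixing the initial distribution does not force a re-selection of the event $A_n$ as in the sup-over-starting-states case; as noted above, it does not, and the rest of the adaptation is mechanical.
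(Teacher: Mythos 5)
Your proposal matches the paper's own proof: the paper proves Theorem \ref{t-ini} precisely by noting that Lemma \ref{l-cut1} has an easy analogue for a prescribed initial distribution and that the proofs of Propositions \ref{p-main}--\ref{p-wcomp}, Corollary \ref{c-main}, and Theorems \ref{t-cutcomp}, \ref{t-window}, \ref{t-wcomp} adapt verbatim, with Lemma \ref{l-meas} applied to $a_{n,m}=\mu_n K_n^m(A_n)$. Your write-up supplies the same adaptation, just with the bookkeeping made explicit.
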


\subsection{Proofs}

This subsection collects required techniques for the proof of theorems in Sections \ref{ss-cutofftime}-\ref{ss-window}.

\begin{lem}\label{l-meas}
Let $a_{n,m}\in[0,1]$, $t_n>0$ and $c\in\mathbb{R}$. Suppose that $t_n\ra\infty$. Then, there is a subsequence $n_k$ of positive integers and a nonnegative measurable function $f$ bounded by $1$ such that
\[
    \lim_{k\ra\infty}\sum_{m=0}^\infty\left(e^{-t_{n_k}-c\sqrt{t_{n_k}}}\frac{(t_{n_k}+c\sqrt{t_{n_k}})^m}
    {m!}\right)a_{n_k,m}=\frac{1}{\sqrt{2\pi}}\int_{-\infty}^\infty e^{-(x-c)^2/2}f(x)dx,
\]
for all $c\in\mathbb{R}$, and
\begin{align}
    &\lim_{k\ra\infty}\sum_{m\ge 0}\binom{[(t_{n_k}+c\sqrt{t_{n_k}})/(1-\delta)]}{m}(1-\delta)^m
    \delta^{[(t_{n_k}+c\sqrt{t_{n_k}})/(1-\delta)]-m}a_{n_k,m}\notag\\
    =&\frac{1}{\sqrt{2\pi\delta}}\int_{-\infty}^\infty e^{-(x-c)^2/(2\delta)}f(x)dx,\notag
\end{align}
for all $c\in\mathbb{R},\,\delta\in(0,1)$, where $[z]$ is any of $\lceil z\rceil,\lfloor z\rfloor$.
\end{lem}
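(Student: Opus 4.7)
The plan is to combine weak-$*$ sequential compactness of the unit ball of $L^\infty(\mathbb{R})$ with local central limit theorems for the Poisson and binomial laws (obtained via Stirling's formula). Define $g_n\in L^\infty(\mathbb{R})$ by $g_n(x)=a_{n,\lfloor t_n+x\sqrt{t_n}\rfloor}$ when $t_n+x\sqrt{t_n}\ge 0$ and $g_n(x)=0$ otherwise, so $\|g_n\|_\infty\le 1$. Since $L^1(\mathbb{R})$ is separable, the closed unit ball of $L^\infty(\mathbb{R})=L^1(\mathbb{R})^*$ is weak-$*$ metrizable, hence sequentially compact by Banach-Alaoglu; extract a subsequence $n_k$ and a measurable $f:\mathbb{R}\to[0,1]$ with $g_{n_k}\to f$ in the weak-$*$ topology. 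Because weak-$*$ convergence tests against \emph{all} of $L^1(\mathbb{R})$, this single subsequence will serve for every $c\in\mathbb{R}$ and every $\delta\in(0,1)$ simultaneously.

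The two sums are then rewritten as integrals of $g_n$ against piecewise-constant probability densities. For the Poisson case, with $\lambda_n=t_n+c\sqrt{t_n}$ put
\[
 \phi_n^{(c)}(x)=\sqrt{t_n}\,e^{-\lambda_n}\frac{\lambda_n^{\lfloor t_n+x\sqrt{t_n}\rfloor}}{\lfloor t_n+x\sqrt{t_n}\rfloor!},
\]
interpreted as $0$ where the floor is negative. Because $x\mapsto\lfloor t_n+x\sqrt{t_n}\rfloor$ advances by $1$ on intervals of length $1/\sqrt{t_n}$, one has $\int \phi_n^{(c)}\,dx=1$ and the first sum of the lemma equals $\int g_n\phi_n^{(c)}\,dx$. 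A direct Stirling computation gives the pointwise limit $\phi_n^{(c)}(x)\to (2\pi)^{-1/2}e^{-(x-c)^2/2}=:\phi^{(c)}(x)$, and since both prelimit and limit are probability densities, Scheff\'e's lemma promotes this to convergence in $L^1(\mathbb{R})$. The splitting
\[
 \int g_n\phi_n^{(c)}-\int f\phi^{(c)}=\int g_n\bigl(\phi_n^{(c)}-\phi^{(c)}\bigr)+\int(g_n-f)\phi^{(c)}
\]
has both terms tending to $0$ along $n_k$: the first by $\|g_n\|_\infty\le 1$ and $L^1$ convergence of $\phi_n^{(c)}$, the second by weak-$*$ convergence of $g_{n_k}$. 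This yields the first displayed identity.

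The binomial identity is obtained by the same recipe with $\phi_n^{(c)}$ replaced by
\[
 \psi_n^{(c,\delta)}(x)=\sqrt{t_n}\binom{N_n}{\lfloor t_n+x\sqrt{t_n}\rfloor}(1-\delta)^{\lfloor t_n+x\sqrt{t_n}\rfloor}\delta^{N_n-\lfloor t_n+x\sqrt{t_n}\rfloor},\qquad N_n=[\lambda_n/(1-\delta)].
\]
The law $\mathrm{Bin}(N_n,1-\delta)$ has mean $\sim\lambda_n$ and variance $\sim\delta t_n$, so Stirling yields the pointwise limit $(2\pi\delta)^{-1/2}e^{-(x-c)^2/(2\delta)}$, Scheff\'e gives $L^1$ convergence, and the same weak-$*$/$L^1$ pairing delivers the second identity. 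The one step that requires genuine care is the pointwise local CLT via Stirling's formula (a short but mildly tedious asymptotic expansion in $x,c$, uniform on compact sets); once that is in hand, Scheff\'e's lemma neatly eliminates any need for uniform tail estimates and a single subsequence handles every $(c,\delta)$.
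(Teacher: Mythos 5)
Your proof is correct, and it reaches the lemma by a genuinely different (more functional-analytic) route than the paper. The paper also encodes the array $a_{n,m}$ as a rescaled object on $\mathbb{R}$, but it works with the atomic measures $\mu_n(A)=t_n^{-1/2}\sum_{m:(m-t_n)/\sqrt{t_n}\in A}a_{n,m}$: it extracts a subsequence along which $\mu_n((a,b])$ converges for all rational $a<b$, uses the bound $\mu((a,b])\le b-a$ together with the Carath\'eodory extension theorem to produce a limit measure absolutely continuous with respect to Lebesgue measure, and takes $f$ to be its Radon--Nikodym derivative; the convergence of the Poisson (and binomial) averages is then proved by truncating the tails, applying Stirling's formula uniformly on the compact window $|m-t_n|\le N\sqrt{t_n}$, and squeezing the sum between upper and lower Riemann-type sums against $\mu$ before letting $N\to\infty$ and the tail error tend to $0$. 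You instead realize the same data as step functions $g_n$ with $\|g_n\|_\infty\le 1$, get $f$ in one stroke from weak-$*$ sequential compactness of the unit ball of $L^\infty=(L^1)^*$ (Banach--Alaoglu plus separability of $L^1$, with the routine remark that the limit lies in $[0,1]$ a.e.), and replace the bracketing argument by the local CLT pointwise limit of the rescaled Poisson/binomial densities upgraded to $L^1$ convergence via Scheff\'e's lemma, after which the splitting $\int g_n\phi_n-\int f\phi=\int g_n(\phi_n-\phi)+\int(g_n-f)\phi$ finishes the job. What your approach buys is the elimination of explicit tail estimates and Riemann-sum bookkeeping (Scheff\'e absorbs both), and the fact that a single subsequence automatically serves every $c$ and $\delta$ because weak-$*$ convergence tests against all of $L^1$; the paper secures the same uniformity more hands-on, since its subsequence is chosen only from the countable family of rational intervals and the limit $f$ is then independent of $c,\delta$. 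What the paper's argument buys is elementarity: it needs only basic measure theory and Stirling, with no appeal to duality or weak-$*$ metrizability. Both proofs share the same analytic core, namely the Gaussian local limit computation for the Poisson and binomial weights at scale $\sqrt{t_n}$, and your observation that the choice $\lceil\cdot\rceil$ versus $\lfloor\cdot\rfloor$ in $N_n$ only shifts the mean by $O(1)=o(\sqrt{t_n})$ disposes of the bracket ambiguity in the statement.
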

\begin{proof}
For $n\ge 1$ and any Borel set $A\subset\mathbb{R}$, set
\[
    \mu_n(A)=\frac{1}{\sqrt{t_n}}\sum_{m:m-t_n/\sqrt{t_n}\in A}a_{n,m}.
\]
Let $n_k$ be a subsequence of $\mathbb{N}$ such that
\begin{equation}\label{eq-meas}
    \lim_{k\ra\infty}\mu_{n_k}((a,b])=\mu((a,b]),\quad\forall a,b\in\mathbb{Q},\,a<b.
\end{equation}
Clearly, $\mu((a,b])\le b-a$ for $a<b$ and $a,b\in\mathbb{Q}$. This implies the convergence in (\ref{eq-meas}) holds for all $a<b$ and $\mu((a,b])\le b-a$. As a consequence of the Carath\'{e}odory extension theorem, $\mu$ can be extended to a measure on $\mathbb{R}$. It is obvious that $\mu$ is absolutely continuous with respect to the Lebesgue measure and we write $f$ as the Radon-Nykodym derivative.

Let $\epsilon>0$ and choose $M>0$ such that, for $n\ge M$,
\[
    \sum_{m:|m-t_n|/\sqrt{t_n}\notin(-M,M]}e^{-t_n-c\sqrt{t_n}}\frac{(t_n+c\sqrt{t_n})^m}{m!}<\epsilon.
\]
For any integer $N>1$, set $x_i=iM/N$ and $A_{n,i}=\{m\ge 0||m-t_n|/\sqrt{t_n}\in(x_i,x_{i+1}]\}$. By Stirling's formula, it is easy to see that
\[
    e^{-t_n-c\sqrt{t_n}}\frac{(t_n+c\sqrt{t_n})^m}{m!}=\frac{1+o(1)}{\sqrt{2\pi t_n}}\exp\left\{-\frac{1}{2}\left(\frac{m-t_n}{\sqrt{t_n}}-c\right)^2\right\}\quad\text{as }n\ra\infty,
\]
where $o(1)$ is uniformly for $m\in A_{n,i}$ and $-N\le i<N$. This implies
\[
    \sum_{m\in A_{n,i}}\left(e^{-t_n-c\sqrt{t_n}}\frac{(t_n+c\sqrt{t_n})^m}{m!}\right)a_{n,m}\begin{cases}\le
    M_i\mu_n(A_{n,i})/\sqrt{2\pi}+o(1)\\\ge m_i\mu_n(A_{n,i})/\sqrt{2\pi}+o(1)\end{cases},
\]
where $U_i=\sup\{e^{-(x-c)^2/2}|x\in(x_i,x_{i+1}]$ and $L_i=\inf\{e^{-(x-c)^2/2}|x\in(x_i,x_{i+1}]\}$. Summing up $i$ and replacing $n$ with $n_k$ yields
\[
    \limsup_{k\ra\infty}\sum_{m=0}^\infty\left(e^{-t_{n_k}-c\sqrt{t_{n_k}}}\frac{(t_{n_k}+c\sqrt{t_{n_k}})^m}
    {m!}\right)a_{n_k,m}\le\frac{1}{\sqrt{2\pi}}\sum_{i=-N}^{N-1}M_i\mu((x_i,x_{i+1}])+\epsilon
\]
and
\[
    \liminf_{k\ra\infty}\sum_{m=0}^\infty\left(e^{-t_{n_k}-c\sqrt{t_{n_k}}}\frac{(t_{n_k}+c\sqrt{t_{n_k}})^m}
    {m!}\right)a_{n_k,m}\ge\frac{1}{\sqrt{2\pi}}\sum_{i=-N}^{N-1}m_i\mu((x_i,x_{i+1}])-\epsilon.
\]
Letting $N\ra\infty$ and then $\epsilon\ra 0$ gives the desired limit. The proof of the second limit is similar and omitted.
\end{proof}

\begin{lem}\label{l-conv}
Let $f$ be a bounded nonnegative measurable function and set $F(t)=\int_{-\infty}^\infty e^{-(x-t)^2}f(x)dx$. If $F$ is constant, then $f$ is constant almost everywhere.
\end{lem}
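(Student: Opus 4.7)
The plan is to reduce the statement to the classical completeness of the Hermite polynomials in the Gaussian-weighted Hilbert space $L^2(\mathbb{R},e^{-x^2}\,dx)$. First, write $F\equiv C$ and set $c=C/\sqrt{\pi}$ and $\tilde f=f-c$; then $\tilde f$ is bounded (no longer necessarily nonnegative), and
\[
 \int_{-\infty}^\infty e^{-(x-t)^2}\tilde f(x)\,dx=0\qquad\forall\,t\in\mathbb{R}.
\]
So it suffices to show that if $\tilde f$ is bounded and this vanishing convolution holds, then $\tilde f=0$ a.e.

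Next I will differentiate under the integral sign repeatedly in $t$. This is justified because $\tilde f$ is bounded while $(d/dt)^n e^{-(x-t)^2}$ is a polynomial in $(x-t)$ times $e^{-(x-t)^2}$, yielding $t$-uniform integrable majorants on bounded intervals. This produces, for every $n\ge 0$ and every $t\in\mathbb{R}$,
\[
 \int_{-\infty}^\infty P_n(x-t)\,e^{-(x-t)^2}\tilde f(x)\,dx=0,
\]
where $P_n$ is a polynomial of degree $n$ (a scalar multiple of the $n$th Hermite polynomial). Evaluating at $t=0$ and varying $n$ gives $\int_{-\infty}^\infty Q(x)\,e^{-x^2}\tilde f(x)\,dx=0$ for every polynomial $Q$.

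Finally, since $\tilde f$ is bounded and $e^{-x^2}\,dx$ is a finite measure, $\tilde f\in L^2(\mathbb{R},e^{-x^2}\,dx)$, and the previous identity says $\tilde f$ is orthogonal in this Hilbert space to every polynomial. Because the Hermite polynomials form an orthogonal basis of $L^2(\mathbb{R},e^{-x^2}\,dx)$, this forces $\tilde f=0$ in $L^2(e^{-x^2}\,dx)$, and hence (since $e^{-x^2}>0$) Lebesgue-almost everywhere, which is the claim.

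The only nontrivial ingredient is the completeness of the Hermite polynomials in the Gaussian-weighted $L^2$, which is classical. A slicker alternative would be to view $F=f*G$ with $G(x)=e^{-x^2}$, pass to Fourier transforms in the sense of tempered distributions, and use that $\widehat G$ is a nowhere-vanishing Gaussian to deduce directly that $\widehat f$ is supported at the origin, i.e.\ that $f$ is a constant; but the Hermite-polynomial route is more elementary and sidesteps distributional subtleties arising from $f$ being merely bounded.
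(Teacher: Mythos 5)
Your proof is correct, but it follows a genuinely different route from the paper. You subtract the constant $C/\sqrt{\pi}$, differentiate the identity repeatedly in $t$ (legitimately, by the uniform integrable majorants you describe), and conclude that $\tilde f=f-c$ is orthogonal in $L^2(\mathbb{R},e^{-x^2}dx)$ to every Hermite polynomial, whence $\tilde f=0$ a.e.\ by completeness of the Hermite system. The paper instead exploits the nonnegativity of $f$: it normalizes $Be^{-x^2/2}f(x)$ into a probability density, observes that (after the reparametrization $t\mapsto t/2$) the hypothesis says the density of $X+Y$, with $X$ standard normal independent of $Y$, is proportional to $e^{-t^2/4}$, and then divides characteristic functions, $e^{-u^2}=e^{-u^2/2}\,\mathbb{E}(e^{iuY})$, to identify $Y$ as standard normal and hence $f$ as constant a.e. So the paper's key ingredient is uniqueness of characteristic functions for probability measures, while yours is density of polynomials in the Gaussian-weighted $L^2$; your alternative remark about tempered distributions is in fact the closest analogue of the paper's argument, which sidesteps distributional issues precisely by using $f\ge 0$ to work with honest probability laws. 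Your Hermite route buys a little extra generality (it never uses nonnegativity, only enough integrability for $\tilde f\in L^2(e^{-x^2}dx)$, and it handles the degenerate case $f=0$ a.e.\ without separate comment), at the cost of invoking the classical completeness theorem; the paper's argument is shorter given standard probability facts but is tied to $f$ being a nonnegative bounded function.
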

\begin{proof}
Set $A=F(t)$, $B^{-1}=\int_{-\infty}^\infty e^{-x^2/2}f(x)dx$ and write
\[
    e^{-(x-t/2)^2}f(x)=B^{-1}\sqrt{2\pi}e^{t^2/4}\left(\frac{1}{\sqrt{2\pi}}
    e^{-(t-x)^2/2}\right)\left(Be^{-x^2/2}f(x)\right).
\]
Note that $AB/(\sqrt{2\pi}e^{t^2/4})$ is the density of $X+Y$, where $X$ has the standard normal distribution, $Y$ is continuous with density function $Be^{-x^2/2}f(x)$ and $X,Y$ are independent. This implies $AB=1/\sqrt{2}$ and
\[
    e^{-u^2}=\mathbb{E}(e^{iu(X+Y)})=e^{-u^2/2}\mathbb{E}(e^{iuY}),\quad\forall u\in\mathbb{R}.
\]
Clearly, $Y$ has the standard normal distribution and, thus, $f$ is a constant a.e.
\end{proof}

\subsection{A remark on the spectral gap and mixing time}
In this subsection, we make a comparison of spectral gaps between continuous time chains and $\delta$-lazy discrete time chains. Let $(\Omega,K,\pi)$ be an irreducible and reversible finite Markov chain with spectral gap $\lambda$, the smallest non-zero eigenvalue of $I-K$. First, we consider the continuous time case. Since $(K,\pi)$ is reversible, there is a function $f$ defined on $\{0,1,...,n\}$ such that $Kf=(1-\lambda)f$. This implies
\[
 d_{\text{\tiny TV}}^{(c)}(t)=\frac{1}{2}\|H_t-\pi\|_{\infty\ra\infty}\ge\frac{\|(H_t-\pi)f\|_\infty}{2\|f\|_\infty}=\frac{e^{-\lambda t}}{2},
\]
where $\|A\|_{\infty\ra\infty}:=\sup\{\|Ag\|_{\infty}:\|g\|_\infty=1\}$. Consequently, we obtain
\[
 T_{\text{\tiny TV}}^{(c)}(\epsilon)\ge\frac{-\log(2\epsilon)}{\lambda}.
\]
For the lazy discrete time case, a similar discussion yields
\[
 d_{\text{\tiny TV}}^{(\delta)}(t)\ge \beta_\delta^t/2,\quad T_{\text{\tiny TV}}^{(\delta)}(\epsilon)\ge\left\lfloor\frac{\log(2\epsilon)}{\log \beta_\delta}\right\rfloor,
\]
where $\beta_\delta$ is the second largest absolute value of all nontrivial eigenvalue values of $K_\delta$. By setting $\delta_0=\inf\{\delta\in(0,1)|\beta_\delta=1-(1-\delta)\lambda\}$, it is easy to see that $\delta_0\le 1/2$ and, for $\delta\in[\delta_0,1)$, $\beta_\delta=1-(1-\delta)\lambda$. As a function of $\delta$, $\beta_\delta$ is decreasing on $(0,\delta_0)$ and increasing on $(\delta_0,1)$. Note that $|1-(1-\delta)\lambda|\le \beta_\delta\le\max\{1-2\delta,1-(1-\delta)\lambda\}$. The first inequality implies $1-\beta_\delta\le(1-\delta)\lambda$. Using the second inequality, if $\beta_{\delta}>1-2\delta$, then $1-\beta_\delta=(1-\delta)\lambda$. If $\beta_\delta\le 1-2\delta$, then $1-\beta_\delta\ge 2\delta\ge \delta\lambda$, where the last inequality uses the fact $\lambda\le 2$. We summarize the discussion in the following lemma.
\begin{lem}\label{l-gapsing}
Let $K$ be an irreducible transition matrix on a finite set $\Omega$ with stationary distribution $\pi$. For $\delta\in(0,1)$, let $K_\delta$ be the $\delta$-lazy walk given by \textnormal{(\ref{eq-lazyK})}. Suppose $(\pi,K)$ is reversible, that is, $\pi(x)K(x,y)=\pi(y)K(y,x)$ for all $x,y\in\Omega$ and let $\lambda$ be the smallest non-zero eigenvalue of $I-K$ and $\beta_\delta$ be the largest absolute value of all nontrivial eigenvalues of $K_\delta$. Then, it holds true that
\[
 \min\left\{1-\delta,\delta\right\}\lambda\le 1-\beta_\delta\le 1-|1-(1-\delta)\lambda|\le(1-\delta)\lambda,\quad\forall\delta\in(0,1).
\]
Furthermore, for $\epsilon\in(0,1/2)$,
\[
 T_{\textnormal{\tiny TV}}^{(c)}(\epsilon)\ge\frac{-\log(2\epsilon)}{\lambda},\quad T_{\textnormal{\tiny TV}}^{(\delta)}(\epsilon)\ge\left\lfloor\frac{\log(2\epsilon)}{\log \beta_\delta}\right\rfloor\ge\left\lfloor\frac{-\log(2\epsilon)}{2\max\{1-\delta,\log(2/\delta)\}\lambda}\right\rfloor,
\]
where the last inequality assumes $|\Omega|\ge 2/\delta$.
\end{lem}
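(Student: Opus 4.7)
The plan is to assemble the pieces of the discussion preceding the lemma into a formal proof. The spectral input is that $K_\delta=\delta I+(1-\delta)K$ has eigenvalues $\delta+(1-\delta)\mu$ as $\mu$ ranges over the eigenvalues of $K$, which by reversibility are real and lie in $[-1,1]$, with $1$ and $1-\lambda$ among them. The nontrivial eigenvalues of $K$ sit in $[-1,1-\lambda]$, on which $\mu\mapsto|\delta+(1-\delta)\mu|$ is convex, so its maximum is attained at an endpoint. This pins $\beta_\delta$ between $|1-(1-\delta)\lambda|$ (the value at $\mu=1-\lambda$) and $\max\{1-(1-\delta)\lambda,|1-2\delta|\}$.

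For the sandwich on $1-\beta_\delta$, the right two inequalities follow immediately from the lower bound above and the elementary $1-|1-x|\le x$ for $x\ge 0$. The left inequality splits into two cases: if $\beta_\delta>1-2\delta$, the upper envelope forces $\beta_\delta=1-(1-\delta)\lambda$, hence $1-\beta_\delta=(1-\delta)\lambda\ge\min\{1-\delta,\delta\}\lambda$; otherwise $1-\beta_\delta\ge 2\delta\ge\delta\lambda$, where the last step uses $\lambda\le 2$ (since eigenvalues of $I-K$ lie in $[0,2]$).

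The mixing lower bounds follow the route already sketched in the text: take a real eigenfunction $f$ of $K$ with $Kf=(1-\lambda)f$ (which exists by reversibility), observe $H_tf=e^{-\lambda t}f$ and $\pi(f)=0$, and compare $\|(H_t-\pi)f\|_\infty$ to $2d_{\textnormal{\tiny TV}}^{(c)}(t)\|f\|_\infty$ to get $d_{\textnormal{\tiny TV}}^{(c)}(t)\ge e^{-\lambda t}/2$. The same argument with an eigenfunction $g$ of $K_\delta$ of eigenvalue $\pm\beta_\delta$ yields $d_{\textnormal{\tiny TV}}^{(\delta)}(n)\ge\beta_\delta^n/2$, hence the stated floor bound by inverting the exponential.

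The final step, turning $\log\beta_\delta$ into a bound involving only $\lambda$, is where the hypothesis $|\Omega|\ge 2/\delta$ enters, and is the main technical point. I would split on whether $\beta_\delta\ge 1/2$. When $\beta_\delta\ge 1/2$, the elementary inequality $-\log(1-y)\le 2y$ for $y\in[0,1/2]$ combined with the sandwich gives $-\log\beta_\delta\le 2(1-\beta_\delta)\le 2(1-\delta)\lambda$. When $\beta_\delta<1/2$, the sandwich $1-\beta_\delta\le(1-\delta)\lambda$ forces $\lambda\ge 1/(2(1-\delta))\ge 1/2$; and a trace-averaging argument—$\textnormal{tr}(K_\delta)=|\Omega|\delta+(1-\delta)\textnormal{tr}(K)\ge|\Omega|\delta$ together with the fact that $1$ is among the eigenvalues—shows that the largest remaining eigenvalue of $K_\delta$ is at least $(|\Omega|\delta-1)/(|\Omega|-1)\ge\delta/2$ once $|\Omega|\ge 2/\delta$. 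Hence $\beta_\delta\ge\delta/2$ and $-\log\beta_\delta\le\log(2/\delta)\le 2\log(2/\delta)\lambda$. Taking the worse of the two cases yields the claimed bound.
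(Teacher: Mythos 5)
Your proof is correct and follows essentially the same route as the paper: the spectral-mapping sandwich $|1-(1-\delta)\lambda|\le\beta_\delta\le\max\{1-2\delta,\,1-(1-\delta)\lambda\}$ with the case split at $1-2\delta$, the eigenfunction lower bounds $d_{\text{\tiny TV}}^{(c)}(t)\ge e^{-\lambda t}/2$ and $d_{\text{\tiny TV}}^{(\delta)}(m)\ge\beta_\delta^m/2$, and the trace-averaging bound $\beta_\delta\ge(|\Omega|\delta-1)/(|\Omega|-1)\ge\delta/2$ under $|\Omega|\ge 2/\delta$. The only cosmetic difference is that you split the final estimate on $\beta_\delta\ge 1/2$ versus $\beta_\delta<1/2$, whereas the paper splits on $\lambda\le 1/2$ versus $\lambda\ge 1/2$; the two case analyses are interchangeable here.
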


\begin{proof}
It remains to prove the second inequality in the lower bound of the mixing time for the $\delta$-lazy chain. Note that if $\lambda\le 1/2$, then
\[
 -\log\beta_\delta\le -\log(1-(1-\delta)\lambda)\le 2(1-\delta)\lambda,
\]
where the last inequality uses the fact $\log(1-x)\ge -2x$ for $x\in(0,1/2)$. For $\lambda\ge 1/2$, let $\theta_1(\delta),...,\theta_{|\Omega|}(\delta)$ be eigenvalues of $K_\delta$. Then, $\theta_i(\delta)=\delta+(1-\delta)\theta_i(0)$ and
$\sum_{i=1}^{|\Omega|}\theta_i(0)\ge 0$. See \cite{SC97} for a reference on the second inequality. This implies
\[
 1+(|\Omega|-1)\beta_\delta\ge \sum_{i=1}^{|\Omega|}\theta_i(\delta)\ge|\Omega|\delta.
\]
Assuming $|\Omega|\ge 2/\delta$, the above inequality yields
\[
 \beta_\delta\ge\frac{|\Omega|\delta-1}{|\Omega|-1}\ge\frac{\delta}{2},\quad -\log\beta_\delta\le\left(2\log\frac{2}{\delta}\right)\lambda.
\]
\end{proof}

\section{Finite birth and death chains}

In this section, we consider the total variation cutoff for birth and death chains. A birth and death chain on $\{0,1,...,n\}$ with birth rate $p_i$, death rate $q_i$ and holding rate $r_i$ is a Markov chain with transition matrix $K$ given by
\begin{equation}\label{eq-bdc}
 K(i,i+1)=p_i,\quad K(i,i-1)=q_i,\quad K(i,i)=r_i,\quad\forall 0\le i\le n,
\end{equation}
where $p_i+q_i+r_i=1$ and $p_n=q_0=0$. It is obvious that $K$ is irreducible if and only if $p_iq_{i+1}>0$ for $0\le i<n$. Under the assumption of irreducibility, the unique stationary distribution $\pi$ of $K$ is given by $\pi(i)=c(p_0\cdots p_{i-1})/(q_1\cdots q_i)$, where $c$ is a positive constant such that $\sum_{i=0}^n\pi(i)=1$.

In the next two subsections, we recall some results developed in \cite{DS06,DLP10} and make an improvement on them using the result in Section 3. In the third subsection, we go back to the issue of cutoffs and make a comparison of total variation and separation cutoffs.

\subsection{The total variation cutoff}
Throughout this subsection, we let
\begin{equation}\label{eq-bdcfamily}
 \mathcal{F}=\{(\Omega_n,K_n,\pi_n)|n=1,2,...\}
\end{equation}
denote a family of irreducible birth and death chains with $\Omega_n=\{0,1,...,n\}$ and transition matrix
\begin{equation}\label{eq-nbdc}
 K_n(i,i+1)=p_{n,i},\quad K_n(i,i-1)=q_{n,i},\quad K_n(i,i)=r_{n,i},\quad\forall 0\le i\le n,
\end{equation}
where $p_{n,i}+q_{n,i}+r_{n,i}=1$ and $p_{n,n}=q_{n,0}=0$. Write $\lambda_n=\lambda(K_n)$ as the spectral gap of $K_n$. As before, $\mathcal{F}_c$ denotes the family of associated continuous semigroups and, for $\delta\in(0,1)$, $\mathcal{F}_\delta$ denotes the family of $\delta$-lazy chains. Recall one of the main results in \cite{DLP10} as follows.
\begin{thm}[Theorems 3-3.1 in \cite{DLP10}]\label{t-bdc-cut}
Consider the family in \textnormal{(\ref{eq-bdcfamily})}. For $n\ge 1$, let $\lambda_n$ be the smallest nonzero eigenvalue of $I-K_n$ and let $\beta_{n,\delta}$ be the second largest absolute value of all nontrivial eigenvalues of $K_{n,\delta}$. Then, $\mathcal{F}_c$ (resp. $\mathcal{F}_\delta$ with $\delta\in(0,1)$) has a total variation cutoff if and only if $T_{n,\textnormal{\tiny TV}}^{(c)}(1/4)\lambda_n\ra\infty$ (resp. $T_{n,\textnormal{\tiny TV}}^{(\delta)}(1/4)(1-\beta_{n,\delta})\ra\infty$). Moreover, if $\mathcal{F}_c$ (resp. $\mathcal{F}_\delta$) has a cutoff, then the window has size at most $\sqrt{T_{n,\textnormal{\tiny TV}}^{(c)}(1/4)/\lambda_n}$ (resp. $\sqrt{T_{n,\textnormal{\tiny TV}}^{(\delta)}(1/4)/(1-\beta_{n,\delta})}$).
\end{thm}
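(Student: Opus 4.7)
The plan is to use the comparison machinery of Section \ref{s-cutoff} to reduce the two halves of the theorem to one another, handle the easy (necessary) direction with Lemma \ref{l-gapsing}, and then appeal to the Karlin--McGregor/Keilson spectral representation of birth and death chains, as in \cite{DS06,DLP10}, for the hard (sufficient) direction.

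First I would show that, granted the continuous-time statement, the lazy statement follows from the paper's earlier results. By Theorem \ref{t-cutcomp}, $\mathcal{F}_\delta$ has a total variation cutoff iff $\mathcal{F}_c$ does, and when a cutoff exists $T_{n,\textnormal{\tiny TV}}^{(c)}(\epsilon)\sim(1-\delta)T_{n,\textnormal{\tiny TV}}^{(\delta)}(\epsilon)$. Lemma \ref{l-gapsing} gives $\min\{1-\delta,\delta\}\lambda_n\le 1-\beta_{n,\delta}\le(1-\delta)\lambda_n$, so $T_{n,\textnormal{\tiny TV}}^{(c)}(1/4)\lambda_n\to\infty$ iff $T_{n,\textnormal{\tiny TV}}^{(\delta)}(1/4)(1-\beta_{n,\delta})\to\infty$. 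When both product conditions fail the common mixing time is $O(1/\lambda_n)$ and there can be no cutoff, so the equivalence reads correctly in the negative direction too. The window comparison is identical, using Theorem \ref{t-wcomp} in place of Theorem \ref{t-cutcomp}; thus it is enough to prove the continuous-time assertion.

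For necessity of the product condition in continuous time I would invoke Lemma \ref{l-gapsing}: the bound $T_{n,\textnormal{\tiny TV}}^{(c)}(\epsilon)\ge -\log(2\epsilon)/\lambda_n$ shows that if $T_{n,\textnormal{\tiny TV}}^{(c)}(1/4)\lambda_n$ stays bounded by some $C$, then for each $\epsilon\in(0,1/4)$ one has $T_{n,\textnormal{\tiny TV}}^{(c)}(\epsilon)/T_{n,\textnormal{\tiny TV}}^{(c)}(1/4)\ge \log(1/(2\epsilon))/C$, which blows up as $\epsilon\to 0$ and rules out cutoff. For sufficiency I would use the spectral representation: started from the endpoint farthest from the mode of $\pi_n$, the hitting time of the opposite endpoint is distributed as $\tau_n=\sum_{i=1}^n E_{\lambda_{n,i}}$ with independent exponentials of rates equal to the nonzero eigenvalues $\lambda_{n,i}$ of $I-K_n$ (Keilson's theorem). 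Then $\mathbb{E}\tau_n=s_n=\sum\lambda_{n,i}^{-1}$ and $\mathrm{Var}(\tau_n)=\sum\lambda_{n,i}^{-2}\le s_n/\lambda_n$, so Chebyshev yields concentration at scale $\sqrt{s_n/\lambda_n}=o(s_n)$ exactly when $s_n\lambda_n\to\infty$. Matching this with $T_{n,\textnormal{\tiny TV}}^{(c)}(1/4)\asymp s_n$ delivers the cutoff at time $s_n$ with window of the claimed order.

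The main obstacle is the last link: converting the hitting-time concentration of $\tau_n$ into a total-variation (rather than separation) cutoff. The separation version is exactly the Diaconis--Saloff-Coste picture from \cite{DS06}, but the upgrade to total variation is the delicate contribution of \cite{DLP10} and relies on structural features specific to birth and death chains (stochastic monotonicity, censoring inequalities, and Anderson-type comparisons) rather than on the reversibility used in the present paper. I would therefore cite \cite{DLP10} for that step, and use the paper's own Theorems \ref{t-cutcomp}, \ref{t-wcomp} and Lemma \ref{l-gapsing} to package the continuous-time result into the symmetric statement of Theorem \ref{t-bdc-cut}.
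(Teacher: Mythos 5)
The first thing to say is that the paper does not prove Theorem \ref{t-bdc-cut}: it is quoted from \cite{DLP10} (Theorems 3 and 3.1 there), so your decision to defer the genuinely hard step --- upgrading hitting-time/separation concentration to a \emph{total variation} cutoff --- to \cite{DLP10} is consistent with the paper's treatment, and your necessity argument via the eigenvalue lower bound of Lemma \ref{l-gapsing} together with the Chebyshev computation for $\mathbb{E}S=s_n$, $\mathrm{Var}(S)\le s_n/\lambda_n$ is indeed the standard \cite{DS06,DLP10} picture.

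The part of your write-up that is new, however, has a gap. You assert that Lemma \ref{l-gapsing} yields the equivalence $T_{n,\textnormal{\tiny TV}}^{(c)}(1/4)\lambda_n\to\infty$ $\Leftrightarrow$ $T_{n,\textnormal{\tiny TV}}^{(\delta)}(1/4)(1-\beta_{n,\delta})\to\infty$. Lemma \ref{l-gapsing} only gives $1-\beta_{n,\delta}\asymp\lambda_n$ for fixed $\delta$; to pass between the two products you also need a two-sided, cutoff-free comparison of $T_{n,\textnormal{\tiny TV}}^{(c)}(1/4)$ and $T_{n,\textnormal{\tiny TV}}^{(\delta)}(1/4)$. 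Theorem \ref{t-cutcomp} gives $T^{(c)}\sim(1-\delta)T^{(\delta)}$ only \emph{after} a cutoff is known, and the general inequality (\ref{eq-upper}) only bounds $T^{(c)}$ above in terms of $T^{(\delta)}$; the direction your reduction really needs (lazy product condition $\Rightarrow$ continuous product condition) requires $T^{(\delta)}=O(T^{(c)})$, which none of the cited results supply. In the paper that comparison is Theorem \ref{t-bdc-cut3}, whose proof uses Theorem \ref{t-bdc-cut} itself, so invoking it (or the claim ``when both product conditions fail the common mixing time is $O(1/\lambda_n)$'') would be circular; for $\delta\in[1/2,1)$ one can extract it from Proposition \ref{p-tvsep} and \cite{DS06}, but for $\delta\in(0,1/2)$ the monotonicity hypothesis there fails, and the honest fix is simply to cite the discrete-time theorem of \cite{DLP10} directly, as the paper does. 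A smaller inaccuracy: the hitting time of the far endpoint started from $0$ is a sum of independent exponentials whose rates are the eigenvalues of the \emph{principal submatrix} of $I-K_n$ (Brown--Shao, cf.\ (\ref{eq-passage})); the sum of exponentials with rates the nonzero eigenvalues of $I-K_n$ is the spectral representation of the separation distance $1-H_t(0,n)/\pi_n(n)$ from \cite{DS06,DLP10}, so the concentration step should be phrased in terms of the latter.
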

\begin{rem}
By Lemma \ref{l-gapsing}, the total variation cutoff in discrete time case is equivalent to $T_{n,\text{\tiny TV}}^{(\delta)}(1/4)\lambda_n\ra\infty$. By Theorems \ref{t-cutcomp}-\ref{t-bdc-cut} and Lemma \ref{l-gapsing}, if $\mathcal{F}_c$ or $\mathcal{F}_\delta$ has a cutoff, then the window size is at most $\sqrt{T_{n,\text{\tiny TV}}^{(c)}(1/4)/\lambda_n}$ or $\sqrt{T_{n,\text{\tiny TV}}^{(\delta)}(1/4)/\lambda_n}$.
\end{rem}

\begin{rem}
There are examples with cutoffs, but the order of the optimal window size is less than $\sqrt{T_{n,\text{\tiny TV}}^{(c)}(1/4)\lambda_n}$. See Remark \ref{r-window}.
\end{rem}

The combination of the above theorem and Theorem \ref{t-cutcomp} yields
\begin{thm}\label{t-bdc-cut2}
Referring to Theorem \ref{t-bdc-cut}, the following are equivalent.
\begin{itemize}
\item[(1)] $\mathcal{F}_c$ has a total variation cutoff.

\item[(2)] $\mathcal{F}_\delta$ has a total variation cutoff.

\item[(3)] $\mathcal{F}_c$ has a total variation precutoff.

\item[(4)] $\mathcal{F}_\delta$ has a total variation precutoff.

\item[(5)] $T_{n,\textnormal{\tiny TV}}^{(c)}(\epsilon)\lambda_n\ra\infty$ for some $\epsilon\in(0,1)$.

\item[(6)] $T_{n,\textnormal{\tiny TV}}^{(\delta)}(\epsilon)\lambda_n\ra\infty$ for some $\epsilon\in(0,1)$.
\end{itemize}
\end{thm}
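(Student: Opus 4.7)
The strategy is to reduce the six conditions to the single spectral condition $T_{n,\textnormal{\tiny TV}}^{(c)}(1/4)\lambda_n\ra\infty$ furnished by Theorem \ref{t-bdc-cut}, and to use Theorem \ref{t-cutcomp} both to bridge the continuous and lazy-discrete settings and to match precutoffs with cutoffs. Two standard observations drive the argument: the submultiplicative estimate $d_{\textnormal{\tiny TV}}(s+t)\le 2d_{\textnormal{\tiny TV}}(s)d_{\textnormal{\tiny TV}}(t)$, which compares mixing times at different thresholds; and the fact that $\lambda_n\le 2$, so any growth $T_n\lambda_n\ra\infty$ already forces $T_n\ra\infty$.

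First I would prove (1)$\Lra$(5). The forward direction is immediate from Theorem \ref{t-bdc-cut} by taking $\epsilon=1/4$. Conversely, suppose (5) holds with some $\epsilon\in(0,1)$. If $\epsilon\ge 1/4$, monotonicity of $T_{n,\textnormal{\tiny TV}}^{(c)}(\cdot)$ gives $T_{n,\textnormal{\tiny TV}}^{(c)}(1/4)\lambda_n\ge T_{n,\textnormal{\tiny TV}}^{(c)}(\epsilon)\lambda_n\ra\infty$. If $\epsilon<1/4$, iterating the submultiplicative inequality yields $T_{n,\textnormal{\tiny TV}}^{(c)}(\epsilon)\le \lceil\log_2(1/(2\epsilon))\rceil\, T_{n,\textnormal{\tiny TV}}^{(c)}(1/4)$, so $T_{n,\textnormal{\tiny TV}}^{(c)}(1/4)\lambda_n\ra\infty$ as well. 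Theorem \ref{t-bdc-cut} then yields (1). The same argument, with $1-\beta_{n,\delta}$ in place of $\lambda_n$ followed by the equivalence $1-\beta_{n,\delta}\asymp\lambda_n$ from Lemma \ref{l-gapsing}, gives (2)$\Lra$(6). Finally (1)$\Lra$(2) is Theorem \ref{t-cutcomp}, whose hypothesis $T_{n,\textnormal{\tiny TV}}(\epsilon_0)\ra\infty$ is automatic from (5) (or (6)) since $\lambda_n\le 2$.

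For the precutoff conditions, (1)$\Ra$(3) and (2)$\Ra$(4) are immediate from Definition \ref{d-cutoff}. The nontrivial step is (3)$\Ra$(5); the analogous (4)$\Ra$(6) follows from the same argument applied to $1-\beta_{n,\delta}$ combined with Lemma \ref{l-gapsing}. Suppose $\mathcal{F}_c$ presents a precutoff with parameters $(t_n,A,B)$. I would first show $t_n\lambda_n\ra\infty$ by contradiction: if $t_{n_k}\lambda_{n_k}\le M$ along a subsequence, then for any $\eta>0$ we have $T_{n_k,\textnormal{\tiny TV}}^{(c)}(\eta)\le Bt_{n_k}$ eventually (since $d_{n_k,\textnormal{\tiny TV}}^{(c)}(Bt_{n_k})\ra 0$); combining this with the lower bound $T_{n,\textnormal{\tiny TV}}^{(c)}(\eta)\lambda_n\ge -\log(2\eta)$ from Lemma \ref{l-gapsing} gives $-\log(2\eta)\le BM$ for every $\eta>0$, a contradiction as $\eta\ra 0$. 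Then choosing $\eta<\liminf_n d_{n,\textnormal{\tiny TV}}^{(c)}(At_n)$ yields $T_{n,\textnormal{\tiny TV}}^{(c)}(\eta)\lambda_n\ge At_n\lambda_n\ra\infty$, so (5) holds. With $T_{n,\textnormal{\tiny TV}}^{(c)}\ra\infty$ now available, (3)$\Lra$(4) follows from Theorem \ref{t-cutcomp}, closing the cycle.

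The main obstacle is the threshold-shift in conditions (5) and (6): Theorem \ref{t-bdc-cut} is tailored to $\epsilon=1/4$, so passing to arbitrary $\epsilon$ requires the submultiplicative upper bound (for $\epsilon<1/4$), while Lemma \ref{l-gapsing}'s spectral lower bound drives the parallel (3)$\Ra$(5) step. Once both bounds are secured, the remaining implications are a routine closing of the equivalence cycle.
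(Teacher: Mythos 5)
Your proposal is correct and takes essentially the same route as the paper: all implications are reduced to Theorem \ref{t-bdc-cut}, Lemma \ref{l-gapsing} and Theorem \ref{t-cutcomp}, and the one genuinely new step is (3)$\Rightarrow$(5), which you obtain from the spectral lower bound $T_{n,\textnormal{\tiny TV}}^{(c)}(\eta)\ge-\log(2\eta)/\lambda_n$, i.e.\ exactly the inequality $d_{n,\textnormal{\tiny TV}}^{(c)}(t)\ge e^{-\lambda_n t}/2$ that the paper's proof invokes. The only difference is that you make explicit the standard submultiplicativity argument transferring the threshold from an arbitrary $\epsilon$ to $1/4$ in (5)$\Rightarrow$(1) and (6)$\Rightarrow$(2), a step the paper leaves implicit.
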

\begin{proof}[Proof of Theorem \ref{t-bdc-cut2}]
It remains to show (3)$\Ra$(5) and this is given by the inequality $d_{n,\text{\tiny TV}}^{(c)}(t)\ge e^{-\lambda_nt}/2$.
\end{proof}

\begin{thm}\label{t-bdc-cut3}
Consider the family in \textnormal{(\ref{eq-bdcfamily})}. It holds true that $T_{n,\textnormal{\tiny TV}}^{(c)}(\epsilon/2)\asymp T_{n,\textnormal{\tiny TV}}^{(\delta)}(\eta/2)$ for all $\epsilon,\eta,\delta\in(0,1)$. Furthermore, if there is $\epsilon_0\in(0,1)$ such that $T_{n,\textnormal{\tiny TV}}^{(c)}(\epsilon_0/2)\lambda_n$ or $T_{n,\textnormal{\tiny TV}}^{(\delta)}(\epsilon_0/2)\lambda_n$ is bounded, then $T_{n,\textnormal{\tiny TV}}^{(c)}(\epsilon/2)\asymp 1/\lambda_n$ and $T_{n,\textnormal{\tiny TV}}^{(\delta)}(\epsilon/2)\asymp 1/\lambda_n$ for all $\epsilon,\delta\in(0,1)$.
\end{thm}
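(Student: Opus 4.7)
The ``furthermore'' clause will follow easily from the first one together with Lemma~\ref{l-gapsing}. Assuming $T_{n,\text{\tiny TV}}^{(c)}(\epsilon_0/2)\lambda_n$ is bounded, the lower bound $T_{n,\text{\tiny TV}}^{(c)}(\epsilon_0/2)\ge -\log(\epsilon_0)/\lambda_n$ of Lemma~\ref{l-gapsing} already gives $T_{n,\text{\tiny TV}}^{(c)}(\epsilon_0/2)\asymp 1/\lambda_n$. Submultiplicativity of $\bar d$ (which yields $T(\epsilon/2)\asymp T(1/4)$ for every $\epsilon\in(0,1)$ within each model) propagates this to all $\epsilon$, and the first clause then transfers the same asymptotic to $T_{n,\text{\tiny TV}}^{(\delta)}(\epsilon/2)$. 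The case in which $T_{n,\text{\tiny TV}}^{(\delta)}(\epsilon_0/2)\lambda_n$ is bounded is handled identically.

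For the first clause, submultiplicativity reduces the problem to showing $T_{n,\text{\tiny TV}}^{(c)}(1/4)\asymp T_{n,\text{\tiny TV}}^{(\delta)}(1/4)$. The plan is to argue by contradiction with a subsequence extraction. Suppose the ratio $T_{n,\text{\tiny TV}}^{(c)}(1/4)/T_{n,\text{\tiny TV}}^{(\delta)}(1/4)$ is not bounded above; along some subsequence $n_k$ it tends to infinity. Passing to a further subsequence, I may assume $T_{n_k,\text{\tiny TV}}^{(c)}(1/4)\lambda_{n_k}\ra L$ for some $L\in[0,\infty]$.

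If $L=\infty$, then $T_{n_k,\text{\tiny TV}}^{(c)}(1/4)\ra\infty$ (since $\lambda_{n_k}\le 2$) and, by Theorem~\ref{t-bdc-cut2} applied to the sub-family indexed by $n_k$, a total variation cutoff occurs in both models; Theorem~\ref{t-cutcomp} then gives $T_{n_k,\text{\tiny TV}}^{(c)}(1/4)\sim(1-\delta)T_{n_k,\text{\tiny TV}}^{(\delta)}(1/4)$, so the ratio tends to $1-\delta$, contradicting divergence to infinity. If $L<\infty$, then $T_{n_k,\text{\tiny TV}}^{(c)}(1/4)\lambda_{n_k}$ is bounded and in particular does not tend to infinity, so by Theorem~\ref{t-bdc-cut2} the sub-family has no cutoff in either model, whence $T_{n_k,\text{\tiny TV}}^{(\delta)}(1/4)\lambda_{n_k}$ also fails to tend to infinity; a further subsequential extraction yields indices on which both $T^{(c)}(1/4)\lambda_n$ and $T^{(\delta)}(1/4)\lambda_n$ are simultaneously bounded. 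Combined with the Lemma~\ref{l-gapsing} lower bounds, this forces $T_{n,\text{\tiny TV}}^{(c)}(1/4)\asymp 1/\lambda_n\asymp T_{n,\text{\tiny TV}}^{(\delta)}(1/4)$ on this final subsequence, contradicting the assumption that the ratio diverges. The direction in which the ratio fails to be bounded below is entirely symmetric.

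The finite-limit case is the step I expect to be the main obstacle. The equivalences (5)--(6) in Theorem~\ref{t-bdc-cut2} are phrased in terms of ``$T\lambda_n\ra\infty$,'' not in terms of ``$T\lambda_n$ is bounded,'' and one has to bridge ``does not tend to infinity'' in one model with a genuine uniform upper bound in the other by a further subsequential extraction. Once both $T^{(c)}\lambda_n$ and $T^{(\delta)}\lambda_n$ are uniformly bounded on the same indices, however, the spectral lower bounds of Lemma~\ref{l-gapsing} close the argument cleanly through $T\asymp 1/\lambda_n$.
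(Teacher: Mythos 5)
Your argument is correct and rests on the same ingredients as the paper's proof (Lemma~\ref{l-gapsing}, the criterion of Theorem~\ref{t-bdc-cut}/\ref{t-bdc-cut2}, and Theorem~\ref{t-cutcomp}), but it is organized differently, and the organization matters here. The paper does not reduce to $\epsilon=\eta=1/4$ and does not split according to the limit $L$ of $T^{(c)}_{n_k}(1/4)\lambda_{n_k}$; it observes instead that the spectral lower bound of Lemma~\ref{l-gapsing} applied to the \emph{denominator} already does all the work: if $T^{(c)}_{n_k}(\epsilon)/T^{(\delta)}_{n_k}(\eta)\ra\infty$, then since $T^{(\delta)}_{n_k}(\eta)\gtrsim 1/\lambda_{n_k}$ one gets $T^{(c)}_{n_k}(\epsilon)\lambda_{n_k}\ra\infty$ directly (and symmetrically in the other direction), so the subfamily has a cutoff in both models and Theorem~\ref{t-cutcomp} forces the ratio to converge to $1-\delta$, a contradiction. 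In particular the case you single out as the main obstacle, $L<\infty$, is vacuous: divergence of the ratio is incompatible with $T^{(c)}(1/4)\lambda$ staying bounded, precisely because the other mixing time is bounded below by a constant multiple of $1/\lambda$. Your treatment of that case (no cutoff transfers between the two models via Theorem~\ref{t-bdc-cut2}, hence both products $T\lambda$ are bounded along a further subsequence, hence both mixing times are $\asymp 1/\lambda_n$ there, contradicting divergence of the ratio) is valid, just unnecessary; likewise the submultiplicativity step $T(\epsilon/2)\asymp T(1/4)$ is sound but can be bypassed, since the cutoff conclusion of Theorem~\ref{t-cutcomp} already identifies mixing times at different levels asymptotically. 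Your derivation of the ``furthermore'' clause from the first clause plus the lower bounds of Lemma~\ref{l-gapsing} matches what the paper intends.
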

\begin{proof}[Proof of Theorem \ref{t-bdc-cut3}]
Assume that there is a subsequence $n_k$ and $\epsilon,\eta\in(0,1/2)$ such that either $T_{n_k,\text{\tiny TV}}^{(c)}(\epsilon)/T_{n_k,\text{\tiny TV}}^{(\delta)}(\eta)\ra\infty$ or $T_{n_k,\text{\tiny TV}}^{(\delta)}(\eta)/T_{n_k,\text{\tiny TV}}^{(c)}(\epsilon)\ra\infty$. By Lemma \ref{l-gapsing}, we have  $T_{n_k,\text{\tiny TV}}^{(c)}(\epsilon)\lambda_{n_k}\ra\infty$ or $T_{n_k,\text{\tiny TV}}^{(\delta)}(\eta)\lambda_{n_k}\ra\infty$. In either case, Theorems \ref{t-cutcomp}-\ref{t-bdc-cut} imply that the subfamily indexed by $(n_k)_{k=1}^\infty$ has a cutoff in both continuous time and $\delta$-lazy discrete time cases. As a consequence of Theorem \ref{t-cutcomp}, we obtain $T_{n_k,\text{\tiny TV}}^{(c)}(\epsilon)\sim(1-\delta)T_{n_k,\text{\tiny TV}}^{(\delta)}(\eta)$, which contradicts with the assumption.
\end{proof}

Concerning the window size, a combination of Theorem \ref{t-window} and Theorem \ref{t-bdc-cut} yields
\begin{thm}\label{t-bdc-win}
Let $\mathcal{F},\lambda_n$ be as in Theorem \ref{t-bdc-cut}. Suppose that $\mathcal{F}_c$ or $\mathcal{F}_\delta$ has a total variation cutoff and $\lambda_n\asymp 1$. Then, for any $\epsilon,\eta\in(0,1)$ with $\epsilon\ne\eta$,
\[
 \left|T_{n,\textnormal{\tiny TV}}^{(c)}(\epsilon)-T_{n,\textnormal{\tiny TV}}^{(c)}(\eta)\right|\asymp \sqrt{T_{n,\textnormal{\tiny TV}}^{(c)}(\epsilon)}\asymp \left|T_{n,\textnormal{\tiny TV}}^{(\delta)}(\epsilon)-T_{n,\textnormal{\tiny TV}}^{(\delta)}(\eta)\right|.
\]
\end{thm}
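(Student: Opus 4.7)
The plan is to combine the window upper bound from Theorem \ref{t-bdc-cut} with the $\sqrt{t_n}$ window lower bound from Theorem \ref{t-window}: under $\lambda_n\asymp 1$ these collapse to the same order, forcing the cutoff window (and hence the gap between mixing times for distinct thresholds) to be of order $\sqrt{t_n}$ exactly. First, by Theorem \ref{t-bdc-cut2} both $\mathcal{F}_c$ and $\mathcal{F}_\delta$ have a total variation cutoff, and Theorem \ref{t-cutcomp} gives $T_{n,\text{\tiny TV}}^{(c)}(\epsilon)\sim t_n$ for every $\epsilon\in(0,1)$ where $t_n:=T_{n,\text{\tiny TV}}^{(c)}(1/4)$, while $T_{n,\text{\tiny TV}}^{(\delta)}(\epsilon)\sim t_n/(1-\delta)$. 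For the upper bound, Theorem \ref{t-bdc-cut} and $\lambda_n\asymp 1$ produce a $(t_n,C\sqrt{t_n})$ cutoff for $\mathcal{F}_c$; by the definition of a cutoff window, $|T_{n,\text{\tiny TV}}^{(c)}(\epsilon)-t_n|=O(\sqrt{t_n})$ for each fixed $\epsilon$, and the triangle inequality gives $|T_{n,\text{\tiny TV}}^{(c)}(\epsilon)-T_{n,\text{\tiny TV}}^{(c)}(\eta)|=O(\sqrt{T_{n,\text{\tiny TV}}^{(c)}(\epsilon)})$. The analogous upper bound for $\mathcal{F}_\delta$ is obtained with $1-\beta_{n,\delta}$ in place of $\lambda_n$, using Lemma \ref{l-gapsing} to see that $1-\beta_{n,\delta}\asymp\lambda_n\asymp 1$.

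For the lower bound in the continuous case, I adapt the argument from the proof of Theorem \ref{t-window}. Suppose, for contradiction, that along some subsequence and for a fixed pair $0<\epsilon<\eta<1$,
\[
b_n:=T_{n,\text{\tiny TV}}^{(c)}(\epsilon)-T_{n,\text{\tiny TV}}^{(c)}(\eta)=o(\sqrt{t_n}).
\]
Let $s_n:=T_{n,\text{\tiny TV}}^{(c)}(\eta)$; since $d_{n,\text{\tiny TV}}^{(c)}$ is continuous and non-increasing, $d_{n,\text{\tiny TV}}^{(c)}(s_n)=\eta$, so I may pick $x_n\in\Omega_n$, $A_n\subset\Omega_n$ with $|H_{n,s_n}(x_n,A_n)-\pi_n(A_n)|=\eta$. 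The key observation, identical to the one used in the proof of Theorem \ref{t-window}, is that because $b_n=o(\sqrt{s_n})$, the Poisson weights
\[
e^{-(s_n+cb_n)}\frac{(s_n+cb_n)^i}{i!}
\]
differ from $e^{-s_n}s_n^i/i!$ by a factor $1+o(1)$ uniformly for $|i-s_n|\le M\sqrt{s_n}$, and the tails outside this range are negligible; hence $H_{n,s_n+cb_n}(x_n,A_n)=H_{n,s_n}(x_n,A_n)+o(1)$ for each fixed $c\in\mathbb{R}$. Taking $c=1$ so that $s_n+b_n=T_{n,\text{\tiny TV}}^{(c)}(\epsilon)$ and using $|H_{n,T^{(c)}(\epsilon)}(x_n,A_n)-\pi_n(A_n)|\le\epsilon$ forces $\eta\le\epsilon+o(1)$, a contradiction.

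The lazy version is obtained by the same scheme, replacing the Poisson weights with the binomial weights $\binom{m}{i}(1-\delta)^i\delta^{m-i}$ and using their $\sqrt{m}$-concentration (as packaged in the second half of Lemma \ref{l-meas}). The only extra care is that in discrete time $d^{(\delta)}$ may jump at $T_{n,\text{\tiny TV}}^{(\delta)}(\eta)$, so I choose $x_n,A_n$ realizing $|K_{n,\delta}^{T^{(\delta)}(\eta)-1}(x_n,A_n)-\pi_n(A_n)|>\eta$ and run the concentration argument against time $T_{n,\text{\tiny TV}}^{(\delta)}(\epsilon)$. Combining everything, $|T_{n,\text{\tiny TV}}^{(c)}(\epsilon)-T_{n,\text{\tiny TV}}^{(c)}(\eta)|\asymp|T_{n,\text{\tiny TV}}^{(\delta)}(\epsilon)-T_{n,\text{\tiny TV}}^{(\delta)}(\eta)|\asymp\sqrt{t_n}\asymp\sqrt{T_{n,\text{\tiny TV}}^{(c)}(\epsilon)}$, giving the claimed triple comparison. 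The main obstacle is precisely this lower bound: assuming only that one pair $(\epsilon,\eta)$ has a small gap does not yield a $(t_n,b_n)$-cutoff with $b_n=o(\sqrt{t_n})$, so Theorem \ref{t-window} cannot be applied as a black box, and one must extract its Poisson/binomial concentration mechanism and apply it by hand around $s_n$.
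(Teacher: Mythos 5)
Your proof is correct, and it is essentially the argument the paper intends: the paper gives no details beyond the remark that the theorem follows from ``a combination of Theorem \ref{t-window} and Theorem \ref{t-bdc-cut}''. Your upper bound is exactly that combination: the window bound $\sqrt{T^{(c)}_{n,\mathrm{TV}}(1/4)/\lambda_n}$ from Theorem \ref{t-bdc-cut} (and its lazy analogue, with $1-\beta_{n,\delta}\asymp\lambda_n$ via Lemma \ref{l-gapsing}), together with the mixing-time characterization of $(t_n,b_n)$ cutoffs (Proposition 2.3 of \cite{CSal08}), gives $|T_{n,\mathrm{TV}}(\epsilon)-T_{n,\mathrm{TV}}(\eta)|=O(\sqrt{t_n})$ in both cases. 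Where you add genuine value is the per-pair lower bound, and your diagnosis is accurate: Theorem \ref{t-window} as a black box only constrains bona fide cutoff windows, and smallness of the gap at one pair of levels $(\epsilon,\eta)$ does not yield a cutoff with window $o(\sqrt{t_n})$, so one must rerun the Poisson/binomial concentration mechanism from its proof (equivalently, $\|\mathrm{Poi}(t)-\mathrm{Poi}(t+s)\|_{\mathrm{TV}}=O(s/\sqrt{t})$ and its binomial counterpart, i.e.\ the content of Lemma \ref{l-meas}) locally around $T_n(\eta)$; your contradiction $\eta\le\epsilon+o(1)$ then closes the argument, and your handling of the possible jump of $d^{(\delta)}_n$ at integer times is the right fix in the lazy case. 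Two small points you should make explicit: $T^{(c)}_{n,\mathrm{TV}}(1/4)\to\infty$, which follows from Theorem \ref{t-bdc-cut} together with $\lambda_n\asymp1$ and is needed both for the smoothing step and for invoking the window/mixing-time equivalence; and $d^{(c)}_n(0)>\eta$ for all large $n$ (immediate from the cutoff), so that continuity and monotonicity of $d^{(c)}_n$ on the finite state space indeed give $d^{(c)}_n\bigl(T^{(c)}_{n,\mathrm{TV}}(\eta)\bigr)=\eta$.
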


\subsection{The separation cutoff}
In this subsection, we apply the results obtained in the previous subsection to the separation cutoff. First, we give a definition of the separation in the following. Given an irreducible finite Markov chain $K$ on $\Omega$ with initial distribution $\mu$ and stationary distribution $\pi$, the separation distance at time $m$ is defined by
\[
 d_{\text{sep}}(\mu,m):=\max_{x\in\Omega}\left\{1-\frac{\mu K^m(x)}{\pi(x)}\right\}.
\]
Aldous and Diaconis \cite{AD87} introduce the concept of the strong stationary time to identify the separation distance. Set $d_{\text{sep}}(m)=\max_id_{\text{sep}}(i,m)$. A well-known bound on the separation is achieved by Aldous and Fill in Lemma 7 of \cite[Chapter 4]{AF}, which says
\begin{equation}\label{eq-sep}
 \bar{d}(m)\le d_{\text{sep}}(m),\quad d_{\text{sep}}(2m)\le 1-(1-\bar{d}(m))^2,
\end{equation}
where $\bar{d}(m):=\max_{i,j}\|K^m(i,\cdot)-K^m(j,\cdot)\|_{\text{\tiny TV}}$. It is clear from the definitions that $d_{\text{\tiny TV}}(m)\le \bar{d}(m)\le 2d_{\text{\tiny TV}}(m)$. Let $T_{\text{sep}}(\epsilon)$ be the separation mixing time. The above inequalities imply
\begin{equation}\label{eq-septv}
 T_{\text{\tiny TV}}(\epsilon)\le T_{\text{sep}}(\epsilon)\le 2T_{\text{\tiny TV}}(\epsilon/4),\quad\forall \epsilon\in(0,1).
\end{equation}
Note that the above discussions are also valid for the continuous time case. As the separation distance is between $(0,1)$, the separation cutoff is similar to the total variation cutoff as in Definition \ref{d-cutoff}. By (\ref{eq-septv}), we obtain the following lemma.
\begin{lem}\label{l-precut}
Let $\mathcal{F}$ be a family of finite Markov chains in either discrete or continuous time case. Assume that $T_{n,\textnormal{\tiny TV}}(\epsilon)\ra\infty$ or $T_{n,\textnormal{sep}}(\epsilon)\ra\infty$ for some $\epsilon\in(0,1)$ in discrete time case. Then, $\mathcal{F}$ has a total variation precutoff if and only if $\mathcal{F}$ has a separation precutoff.
\end{lem}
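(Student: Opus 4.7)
The plan is to read the equivalence off the two-sided comparison (\ref{eq-septv}), or equivalently off the pointwise inequalities (\ref{eq-sep}), which hold verbatim in both discrete and continuous time. The role of the hypothesis is purely to guarantee that, in any precutoff description, the scale $t_n$ must tend to infinity: if $T_{n,\text{\tiny TV}}(\epsilon_0)\ra\infty$ then (\ref{eq-septv}) forces $T_{n,\text{sep}}(\epsilon_0)\ra\infty$ and conversely, so both distances genuinely mix on a growing time-scale. Throughout I argue the discrete case; the continuous case is identical after deleting ceilings and floors.

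For (TV precutoff) $\Ra$ (sep precutoff), take $(t_n,B,A)$ witnessing the TV precutoff. The lower side is automatic since $d_{\text{sep}}\ge\bar d\ge d_{\text{\tiny TV}}$ gives $\liminf_n d_{n,\text{sep}}(\lfloor At_n\rfloor)>0$. For the upper side, apply (\ref{eq-sep}) at $m=\lceil Bt_n\rceil$ to get
\[
    d_{n,\text{sep}}(2\lceil Bt_n\rceil)\le 1-(1-\bar d_n(\lceil Bt_n\rceil))^2\le 4\,d_{n,\text{\tiny TV}}(\lceil Bt_n\rceil)\ra 0.
\]
For any $B'>2B$, the relation $\lceil B't_n\rceil\ge 2\lceil Bt_n\rceil$ for $n$ large (this is where $t_n\ra\infty$ enters) combined with the monotonicity of $d_{\text{sep}}$ then yields $d_{n,\text{sep}}(\lceil B't_n\rceil)\ra 0$, so $(t_n,B',A)$ witnesses a separation precutoff.

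For the converse, take $(t_n,B,A)$ witnessing a separation precutoff. The upper side is immediate from $d_{\text{\tiny TV}}\le d_{\text{sep}}$. For the lower side, set $m_n=\lfloor\lfloor At_n\rfloor/2\rfloor$, so $2m_n\le\lfloor At_n\rfloor$ and hence $\liminf_n d_{n,\text{sep}}(2m_n)>0$ by monotonicity; the contrapositive of $d_{\text{sep}}(2m)\le 4\,d_{\text{\tiny TV}}(m)$ then yields $\liminf_n d_{n,\text{\tiny TV}}(m_n)>0$. Since $m_n\ge (A/3)t_n$ for $n$ large, one further appeal to monotonicity gives $\liminf_n d_{n,\text{\tiny TV}}(\lfloor(A/3)t_n\rfloor)>0$, and $(t_n,B,A/3)$ witnesses a TV precutoff.

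The only genuinely delicate point is the bookkeeping of ceilings and floors when passing between the scales $m$ and $2m$; this is routine but honestly requires $t_n\ra\infty$, which is exactly what the hypothesis supplies via (\ref{eq-septv}).
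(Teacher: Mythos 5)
Your proposal is correct and takes essentially the paper's route: the paper deduces the lemma directly from the two-sided comparison (\ref{eq-septv}) (equivalently the pointwise bounds (\ref{eq-sep})), and your argument is simply a careful expansion of that comparison at the level of the distances, with the factor-two time change, monotonicity of $d_{\textnormal{\tiny TV}}$ and $d_{\textnormal{sep}}$, and the integer rounding handled explicitly — which is precisely where the hypothesis forcing the scale $t_n\to\infty$ is needed in the discrete-time case. No gaps.
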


For birth and death chains, the application of (\ref{eq-septv}) to Theorem \ref{t-bdc-cut3} leads to the following theorem.

\begin{thm}\label{t-bdc-sep}
Theorem \ref{t-bdc-cut3} also holds in separation. Furthermore, for $\epsilon,\eta\in(0,1/2)$,
$T_{n,\textnormal{\tiny TV}}^{(c)}(\epsilon)\asymp T_{n,\textnormal{sep}}^{(c)}(\eta)$.
\end{thm}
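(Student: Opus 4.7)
The plan is to derive everything from the Aldous--Fill sandwich (\ref{eq-septv}), namely $T_{n,\text{\tiny TV}}(\epsilon) \le T_{n,\text{sep}}(\epsilon) \le 2 T_{n,\text{\tiny TV}}(\epsilon/4)$, which applies verbatim in both $\mathcal{F}_c$ and $\mathcal{F}_\delta$. The first observation is that the first half of Theorem \ref{t-bdc-cut3} gives, unconditionally, that the whole collection $\{T_{n,\text{\tiny TV}}^{(c)}(\alpha),\, T_{n,\text{\tiny TV}}^{(\delta)}(\alpha)\}$ indexed by $\alpha\in(0,1/2)$ and $\delta\in(0,1)$ consists of sequences that are pairwise $\asymp$-equivalent.

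Applying the sandwich with any threshold $\epsilon\in(0,1/2)$, both the lower bound at $\epsilon$ and the upper bound at $\epsilon/4$ lie inside this mutually equivalent family (since $\epsilon/4\in(0,1/8)\subset(0,1/2)$). Squeezing gives
\[
 T_{n,\text{sep}}^{(c)}(\epsilon)\asymp T_{n,\text{\tiny TV}}^{(c)}(\epsilon),\qquad T_{n,\text{sep}}^{(\delta)}(\epsilon)\asymp T_{n,\text{\tiny TV}}^{(\delta)}(\epsilon),
\]
for every $\epsilon\in(0,1/2)$ and $\delta\in(0,1)$. This reduces every separation comparison to a total variation one.

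From here the argument is bookkeeping. For the first assertion (Theorem \ref{t-bdc-cut3} in separation), chain
\[
 T_{n,\text{sep}}^{(c)}(\epsilon/2)\asymp T_{n,\text{\tiny TV}}^{(c)}(\epsilon/2)\asymp T_{n,\text{\tiny TV}}^{(\delta)}(\eta/2)\asymp T_{n,\text{sep}}^{(\delta)}(\eta/2),
\]
using the squeeze at the two ends and Theorem \ref{t-bdc-cut3} in the middle. The ``furthermore'' statement about $1/\lambda_n$ transfers the same way: the hypothesis ``$T_{n,\text{\tiny TV}}^{(\cdot)}(\epsilon_0/2)\lambda_n$ bounded'' is equivalent under the squeeze to its separation analogue, and the TV conclusion $T_{n,\text{\tiny TV}}^{(\cdot)}(\epsilon/2)\asymp 1/\lambda_n$ then forces $T_{n,\text{sep}}^{(\cdot)}(\epsilon/2)\asymp 1/\lambda_n$. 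Finally, the additional assertion $T_{n,\text{\tiny TV}}^{(c)}(\epsilon)\asymp T_{n,\text{sep}}^{(c)}(\eta)$ for $\epsilon,\eta\in(0,1/2)$ follows by the single chain $T_{n,\text{\tiny TV}}^{(c)}(\epsilon)\asymp T_{n,\text{\tiny TV}}^{(c)}(\eta)\asymp T_{n,\text{sep}}^{(c)}(\eta)$.

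There is no real obstacle; the proof is essentially a composition of the Aldous--Fill sandwich with the strong TV statement in Theorem \ref{t-bdc-cut3}. The only point requiring attention is verifying that the two thresholds $\epsilon$ and $\epsilon/4$ appearing in (\ref{eq-septv}) both fall inside the range $(0,1/2)$ where Theorem \ref{t-bdc-cut3} supplies the mutual $\asymp$ equivalence, which is automatic once $\epsilon\in(0,1/2)$.
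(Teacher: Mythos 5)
Your proposal is correct and is exactly the paper's intended argument: the paper proves this theorem simply by applying the Aldous--Fill sandwich (\ref{eq-septv}) to Theorem \ref{t-bdc-cut3}, using (as you do) that the total variation mixing times at all thresholds in $(0,1/2)$ are mutually comparable so the two endpoints $\epsilon$ and $\epsilon/4$ can be squeezed together. Nothing is missing; your bookkeeping of the ``furthermore'' transfer via the squeeze matches the paper's (unwritten but evident) reasoning.
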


Let $K$ be an irreducible birth and death chain on $\{0,1,...,n\}$ with stationary distribution $\pi$. The authors in \cite{DLP10} obtain the following fact
\begin{equation}\label{eq-maxsep}
d_{\text{sep}}^{(c)}(t)=1-\frac{H_t(0,n)}{\pi(n)},\quad d_{\text{sep}}^{(\delta)}(m)=1-\frac{K_\delta^m(0,n)}{\pi(n)}\quad\forall\delta\in[1/2,1).
\end{equation}
The authors in \cite{DS06} provide a criterion on the separation cutoff for continuous time chains and monotone discrete time chains. The result says that a separation cutoff exists if and only if the product of the spectral gap and the separation mixing time tends to infinity. The next theorem is a consequence of this fact and Theorems \ref{t-bdc-cut2} and \ref{t-bdc-sep}, which is also obtained in \cite{DLP10}.

\begin{thm}\label{t-bdc-sep2}
Let $\mathcal{F}$ be a family of birth and death chains given by \textnormal{(\ref{eq-bdcfamily})}. The following are equivalent.
\begin{itemize}
\item[(1)] $\mathcal{F}_c$ has a cutoff in total variation.

\item[(2)] For $\delta\in(0,1)$, $\mathcal{F}_\delta$ has a cutoff in total variation.

\item[(3)] $\mathcal{F}_c$ has a cutoff in separation.

\item[(4)] For $\delta\in[1/2,1)$, $\mathcal{F}_\delta$ has a cutoff in separation.
\end{itemize}
\end{thm}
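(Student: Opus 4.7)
The plan is to reduce Theorem \ref{t-bdc-sep2} to three ingredients already on the table: Theorem \ref{t-bdc-cut2} on the total variation side, Theorem \ref{t-bdc-sep} relating total variation and separation for birth and death chains, and the \cite{DS06} criterion that a separation cutoff exists precisely when the product of the separation mixing time and the spectral gap tends to infinity (valid for continuous time chains and for monotone discrete time chains). The equivalence (1)$\Leftrightarrow$(2) is an immediate restatement of Theorem \ref{t-bdc-cut2}, so the work is to establish (1)$\Leftrightarrow$(3) and (2)$\Leftrightarrow$(4).

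For (1)$\Leftrightarrow$(3), I would apply \cite{DS06} to $\mathcal{F}_c$ to obtain that a separation cutoff exists iff $T_{n,\textnormal{sep}}^{(c)}(\epsilon)\lambda_n\ra\infty$ for some $\epsilon\in(0,1)$. Theorem \ref{t-bdc-sep} supplies $T_{n,\textnormal{sep}}^{(c)}(\epsilon)\asymp T_{n,\textnormal{\tiny TV}}^{(c)}(\epsilon)$, turning this into $T_{n,\textnormal{\tiny TV}}^{(c)}(\epsilon)\lambda_n\ra\infty$, which is condition (5) of Theorem \ref{t-bdc-cut2} and hence equivalent to (1).

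For (2)$\Leftrightarrow$(4), the restriction $\delta\in[1/2,1)$ is what makes $K_{n,\delta}$ monotone and makes the identity (\ref{eq-maxsep}) available, so that the discrete time half of \cite{DS06} applies: $\mathcal{F}_\delta$ has a separation cutoff iff $T_{n,\textnormal{sep}}^{(\delta)}(\epsilon)(1-\beta_{n,\delta})\ra\infty$. For fixed $\delta\in[1/2,1)$, Lemma \ref{l-gapsing} gives $1-\beta_{n,\delta}\asymp\lambda_n$, and the lazy analog of Theorem \ref{t-bdc-sep} (obtained from the same use of (\ref{eq-septv})) gives $T_{n,\textnormal{sep}}^{(\delta)}(\epsilon)\asymp T_{n,\textnormal{\tiny TV}}^{(\delta)}(\epsilon)$. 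Chaining these reduces the criterion to $T_{n,\textnormal{\tiny TV}}^{(\delta)}(\epsilon)\lambda_n\ra\infty$, which is (2) by Theorem \ref{t-bdc-cut2}.

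The main obstacle I foresee is the lazy extension of Theorem \ref{t-bdc-sep}: one must verify that the TV/separation comparison and the bounded case dichotomy (where $T_{n,\textnormal{\tiny TV}}^{(\delta)}\asymp 1/\lambda_n$ precludes any cutoff) transfer uniformly in $n$ to $\mathcal{F}_\delta$ for $\delta\in[1/2,1)$, which is the step where $\delta\ge 1/2$ is genuinely used through (\ref{eq-maxsep}). Once this point is in place, the remainder is bookkeeping along the chain of equivalences above.
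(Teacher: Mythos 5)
Your proposal is correct and follows essentially the same route as the paper, which derives Theorem \ref{t-bdc-sep2} precisely from the \cite{DS06} separation-cutoff criterion (continuous time and monotone discrete time, the latter covered by $\delta\ge 1/2$), Theorem \ref{t-bdc-cut2}, and the total variation/separation comparison of Theorem \ref{t-bdc-sep} via (\ref{eq-septv}), with Lemma \ref{l-gapsing} handling $1-\beta_{n,\delta}\asymp\lambda_n$. The "obstacle" you flag is only bookkeeping, since (\ref{eq-septv}) holds for any finite chain and the role of $\delta\ge 1/2$ is exactly the monotonicity needed for the discrete-time half of \cite{DS06}.
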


The next theorem is a simple corollary of Theorems \ref{t-bdc-cut2}-\ref{t-bdc-sep2} and Lemma \ref{l-precut}.

\begin{thm}
Theorem \ref{t-bdc-cut2} also holds in separation distance with $\delta\in[1/2,1)$.
\end{thm}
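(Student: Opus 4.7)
The plan is to reduce each of the six separation statements to the corresponding total variation statement in Theorem \ref{t-bdc-cut2}, after which the mutual equivalence follows. The two cutoff statements (items (1) and (2) in the separation version) coincide with items (3) and (4) of Theorem \ref{t-bdc-sep2}, which are already equivalent to the TV cutoff conditions. So no additional work is required for them.

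For the spectral-gap product conditions (items (5) and (6) in the separation version), I would multiply the sandwich inequality (\ref{eq-septv}) through by $\lambda_n$ to obtain
\[
T_{n,\text{\tiny TV}}^{(\cdot)}(\epsilon)\lambda_n \;\le\; T_{n,\text{sep}}^{(\cdot)}(\epsilon)\lambda_n \;\le\; 2\,T_{n,\text{\tiny TV}}^{(\cdot)}(\epsilon/4)\lambda_n,
\]
which is valid in both the continuous time and the $\delta$-lazy discrete time settings (for $\delta\in[1/2,1)$, so that (\ref{eq-maxsep}) and the Aldous--Fill bound behave well). This sandwich makes the ``for some $\epsilon$'' versions of the separation and TV conditions interchangeable, giving the equivalences with items (5) and (6) of Theorem \ref{t-bdc-cut2}.

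For the precutoff statements (items (3) and (4) in the separation version), I would invoke Lemma \ref{l-precut}, which gives the TV/separation precutoff equivalence directly in continuous time and, in the discrete $\delta$-lazy case, under the hypothesis that $T_{n,\text{\tiny TV}}^{(\delta)}(\epsilon)$ or $T_{n,\text{sep}}^{(\delta)}(\epsilon)$ tends to infinity for some $\epsilon$. The only delicate point, and the main obstacle to organize cleanly, is handling this hypothesis: if both mixing times stay bounded along a subsequence, then since $\lambda_n\le 2$ the products $T_{n,\text{\tiny TV}}^{(\delta)}\lambda_n$ and $T_{n,\text{sep}}^{(\delta)}\lambda_n$ are also bounded there, so on such a subsequence conditions (5)--(6) and their separation analogues all fail and the ``cutoff''/``precutoff'' notions trivialize; otherwise Lemma \ref{l-precut} applies directly. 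Combining these reductions with the mutual equivalence of (1)--(6) established in Theorem \ref{t-bdc-cut2} closes the argument.
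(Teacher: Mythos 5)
Your overall architecture is the paper's own: the paper disposes of this statement as a direct corollary of Theorems \ref{t-bdc-cut2}--\ref{t-bdc-sep2} together with Lemma \ref{l-precut}, and your use of Theorem \ref{t-bdc-sep2} for the two cutoff items and of (\ref{eq-septv}) multiplied by $\lambda_n$ for the spectral-product items is exactly right. The gap is in the step you yourself flag as the main obstacle, the hypothesis of Lemma \ref{l-precut} for the $\delta$-lazy precutoff items. Your resolution of the ``bad'' case --- if the mixing times stay bounded along a subsequence then the cutoff/precutoff notions ``trivialize'' --- is not a proof and is false as a general principle: Definition \ref{d-cutoff} does not require $t_n\ra\infty$, and a family with bounded mixing times can perfectly well exhibit a precutoff (take $t_n$ bounded, $\lfloor At_n\rfloor=0$ and $d_n(0)$ bounded away from $0$). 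Worse, in that case the implication you actually need is ``separation precutoff fails,'' and the only route you have from separation precutoff to total variation precutoff is Lemma \ref{l-precut} itself, whose hypothesis is precisely what is in doubt --- so the argument is circular there rather than closed.

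The repair is easy and stays inside the paper's toolkit: for birth and death chains on $\Omega_n=\{0,1,\dots,n\}$ the hypothesis of Lemma \ref{l-precut} is automatic, so the bad case never occurs. Indeed, every nonzero eigenvalue of $I-K_n$ lies in $(0,2]$, hence $s_n=\sum_i\lambda_{n,i}^{-1}\ge n/2$, and Proposition \ref{p-tvsep} (which applies to $K_{n,\delta}$ for $\delta\in[1/2,1)$ since $K_{n,\delta}(i,i+1)+K_{n,\delta}(i+1,i)\le 1-\delta\le 1/2$) gives $T_{n,\textnormal{\tiny TV}}^{(\delta)}(\epsilon)\ge c_\epsilon\, n\ra\infty$ for small $\epsilon$; alternatively, a bare-hands argument works: started from an endpoint, the chain cannot cross a median of $\pi_n$ in fewer than order $n$ steps, so $d_{n,\textnormal{\tiny TV}}^{(\delta)}(m)\ge 1/2$ for $m\le n/2$, and likewise in continuous time and in separation. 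With this observation, Lemma \ref{l-precut} applies outright, and your reduction of all six separation statements to Theorem \ref{t-bdc-cut2} goes through; no case analysis on subsequences is needed.
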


\subsection{The cutoff time in total variation and separation}

In this subsection, we introduce a spectral representation of the total variation mixing time. Let $K$ be the transition kernel of an irreducible birth and death chain on $\{0,1,...,n\}$. Suppose that $K$ is irreducible with stationary distribution $\pi$ and let $0<\lambda_1<\cdots<\lambda_n$ be the eigenvalues of $I-K$. Consider the continuous time case. Using \cite[Theorem 4.1]{DS06} and \cite[Corollary 4.5]{DLP10}, we have
\[
 d_{\text{sep}}^{(c)}(t)=1-\frac{H_t(0,n)}{\pi(n)}=1-\frac{H_t(n,0)}{\pi(0)}=\mathbb{P}(S>t),
\]
where $S$ is a sum of $n$ independent exponential random variables with parameters $\lambda_1,...,\lambda_n$.
By the one-sided Chebyshev inequality, one has
\[
 \mathbb{E}S-\sqrt{\text{Var}(S)/(1/\epsilon-1)}\le T_{\text{sep}}^{(c)}(\epsilon)\le\mathbb{E}S+\sqrt{(1/\epsilon-1)\text{Var}(S)},\quad\forall \epsilon\in(0,1).
\]
Note that
\[
 \mathbb{E}S=\sum_{i=1}^n\frac{1}{\lambda_i},\quad\text{Var}(S)=\sum_{i=1}^n\frac{1}{\lambda_i^2}\le(\mathbb{E}S)^2.
\]
Clearly, this implies
\begin{equation}\label{eq-sepES}
 \frac{(\sqrt{1-\epsilon}-\sqrt{\epsilon})\mathbb{E}S}{\sqrt{1-\epsilon}}\le T_{\text{sep}}^{(c)}(\epsilon)\le\frac{(\sqrt{\epsilon}+\sqrt{1-\epsilon})\mathbb{E}S}{\sqrt{\epsilon}},\quad\forall\epsilon\in(0,1).
\end{equation}
The above equation says that, given $\epsilon\in(0,1/2)$, the separation mixing time is bounded by $\sum_{i=1}^n\lambda_i^{-1}$ up to universal constants. The above discussion is also valid for discrete time case with the assumption that $K(i,i+1)+K(i+1,i)\le 1$ for $0\le i<n$. See \cite{DS06} for the details. The next proposition is an application of (\ref{eq-septv}) and (\ref{eq-sepES}).

\begin{prop}\label{p-tvsep}
Let $K$ be an irreducible birth and death chain on $\{0,1,...,n\}$. Let $0,\lambda_1,...,\lambda_n$ be eigenvalues of $K$ and set $s=\sum_{i=1}^n\lambda_i^{-1}$. Then,
\[
 \left(\frac{\sqrt{1-\epsilon}-\sqrt{\epsilon}}{\sqrt{1-\epsilon}}\right)s\le T_{\textnormal{sep}}^{(c)}(\epsilon)\le\left(\frac{\sqrt{\epsilon}+\sqrt{1-\epsilon}}{\sqrt{\epsilon}}\right)s,\quad \forall \epsilon\in(0,1/2),
\]
and
\[
 \frac{1}{2}\left(\frac{\sqrt{1-4\epsilon}-\sqrt{4\epsilon}}{\sqrt{1-4\epsilon}}\right)s\le T_{\textnormal{\tiny TV}}^{(c)}(\epsilon)\le\left(\frac{\sqrt{\epsilon}+\sqrt{1-\epsilon}}{\sqrt{\epsilon}}\right)s,\quad\forall \epsilon\in(0,1/8).
\]
The above also holds in discrete time case with the assumption that $K(i,i+1)+K(i+1,i)\le 1$ for $0\le i<n$.
\end{prop}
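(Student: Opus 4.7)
The plan is to derive both assertions as direct consequences of the two facts already assembled immediately before the statement: the two-sided bound (\ref{eq-sepES}) on $T^{(c)}_{\textnormal{sep}}(\epsilon)$ in terms of $\mathbb{E} S$, and the separation--total variation comparison (\ref{eq-septv}). No new technical ingredient is needed; the work is purely bookkeeping of the admissible range of $\epsilon$.

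First I would handle the separation estimate. Since $S$ is the sum of independent exponential variables with rates $\lambda_1,\dots,\lambda_n$, one has $\mathbb{E} S = \sum_{i=1}^{n} \lambda_i^{-1} = s$. Plugging this identity into (\ref{eq-sepES}) gives the claimed double inequality for $T^{(c)}_{\textnormal{sep}}(\epsilon)$ on the whole range $\epsilon \in (0,1/2)$ (the lower bound requires $\sqrt{1-\epsilon} > \sqrt{\epsilon}$, hence the restriction $\epsilon < 1/2$).

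Next I would bootstrap to total variation using (\ref{eq-septv}). For the upper bound, chain $T^{(c)}_{\textnormal{\tiny TV}}(\epsilon) \le T^{(c)}_{\textnormal{sep}}(\epsilon)$ with the separation upper bound just proved; since $(0,1/8) \subset (0,1/2)$, the same expression is valid on the smaller range appearing in the statement. For the lower bound, invert the right inequality of (\ref{eq-septv}) to get $T^{(c)}_{\textnormal{\tiny TV}}(\epsilon) \ge \tfrac{1}{2} T^{(c)}_{\textnormal{sep}}(4\epsilon)$, and then feed in the separation lower bound at level $4\epsilon$; this requires $4\epsilon < 1/2$, i.e.\ $\epsilon < 1/8$, which is exactly the range stipulated in the proposition, and produces the prefactor $\tfrac12 \cdot (\sqrt{1-4\epsilon}-\sqrt{4\epsilon})/\sqrt{1-4\epsilon}$.

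For the discrete time case, the text explicitly asserts that, under the hypothesis $K(i,i+1) + K(i+1,i) \le 1$ for $0 \le i < n$, the identity (\ref{eq-maxsep}) and the sum-of-exponentials-style Chebyshev bound (\ref{eq-sepES}) remain valid (with the appropriate geometric-type variables from \cite{DS06}), while (\ref{eq-septv}) is a purely general fact. Thus the very same two-line derivation yields both inequalities in discrete time as well. The only potential subtlety — and the sole point I would double-check carefully in a full write-up — is confirming that the inverted form of (\ref{eq-septv}) one uses for the lower bound, $T_{\textnormal{\tiny TV}}(\epsilon) \ge \tfrac{1}{2} T_{\textnormal{sep}}(4\epsilon)$, is applied with the correct integer conventions in the lazy discrete setting; this is the only place where the ranges of $\epsilon$ matter, and no further obstacle arises.
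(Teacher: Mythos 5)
Your proposal is correct and is essentially the paper's own argument: the proposition is stated there precisely as an application of (\ref{eq-septv}) and (\ref{eq-sepES}), i.e.\ substituting $\mathbb{E}S=s$ into (\ref{eq-sepES}) for the separation bounds and then using $T_{\textnormal{\tiny TV}}(\epsilon)\le T_{\textnormal{sep}}(\epsilon)$ together with $T_{\textnormal{sep}}(4\epsilon)\le 2T_{\textnormal{\tiny TV}}(\epsilon)$ (valid for $\epsilon<1/8$ to keep $4\epsilon<1/2$) for total variation, with the discrete-time case handled verbatim under the monotonicity hypothesis via \cite{DS06}. Your bookkeeping of the $\epsilon$-ranges and the factor $\tfrac12$ matches the stated constants, so nothing is missing.
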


Applying Proposition \ref{p-tvsep} to Theorems \ref{t-bdc-cut2}-\ref{t-bdc-sep2} yields the following theorem, where the result in separation is included in \cite{DS06} and the result in total variation is implicitly obtained in \cite{DLP10}.

\begin{thm}[Cutoffs from the spectrum]\label{t-mixingtvsep3}
Let $\mathcal{F}$ be the family in \textnormal{(\ref{eq-bdcfamily})}. For $n\ge 1$, let $\lambda_{n,1},...,\lambda_{n,n}$ be non-zero eigenvalues of $I-K_n$ and set
\[
 \lambda_n=\min_{1\le i\le n}\lambda_{n,i},\quad s_n=\frac{1}{\lambda_{n,1}}+\cdots+\frac{1}{\lambda_{n,n}}.
\]
Then, the following are equivalent.
\begin{itemize}
\item[(1)] $\mathcal{F}_c$ has a total variation cutoff.

\item[(2)] For $\delta\in(0,1)$, $\mathcal{F}_\delta$ has a total variation cutoff.

\item[(3)] $\mathcal{F}_c$ has a total variation precutoff.

\item[(4)] For $\delta\in(0,1)$, $\mathcal{F}_\delta$ has a total variation precutoff.

\item[(5)] $s_n\lambda_n\ra\infty$.
\end{itemize}

The above also holds in separation with $\delta\in[1/2,1)$. In particular, if \textnormal{(5)} holds, then, for $\epsilon\in(0,1)$,
\[
 \frac{1}{2}\le\liminf_{n\ra\infty}\frac{T_{n,\textnormal{\tiny TV}}^{(c)}(\epsilon)}{s_n}\le\limsup_{n\ra\infty}\frac{T_{n,\textnormal{\tiny TV}}^{(c)}(\epsilon)}{s_n}\le 1.
\]
\end{thm}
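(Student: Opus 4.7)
The plan is to combine the already-established equivalences of Theorem \ref{t-bdc-cut2} (and its separation counterpart stated just above) with the spectral bound $T_{n,\textnormal{\tiny TV}}^{(c)}(\epsilon)\asymp s_n$ from Proposition \ref{p-tvsep} to close the loop with condition (5); the quantitative $\liminf/\limsup$ bounds then come from a direct Chebyshev argument on the exponential-sum representation of the separation distance together with the Aldous--Fill comparison (\ref{eq-septv}).

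First I would note that Theorem \ref{t-bdc-cut2} already gives (1)$\Leftrightarrow$(2)$\Leftrightarrow$(3)$\Leftrightarrow$(4) and moreover reduces each to the spectral condition $T_{n,\textnormal{\tiny TV}}^{(c)}(\epsilon)\lambda_n\to\infty$ for some $\epsilon\in(0,1)$. Fixing any $\epsilon\in(0,1/8)$, Proposition \ref{p-tvsep} yields $T_{n,\textnormal{\tiny TV}}^{(c)}(\epsilon)\asymp s_n$ with implicit constants depending only on $\epsilon$, so $T_{n,\textnormal{\tiny TV}}^{(c)}(\epsilon)\lambda_n\to\infty$ if and only if $s_n\lambda_n\to\infty$, giving (3)$\Leftrightarrow$(5). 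The separation analogue is handled identically by substituting the separation extension of Theorem \ref{t-bdc-cut2} stated just before the present theorem, together with the separation half of Proposition \ref{p-tvsep} (valid for $\epsilon\in(0,1/2)$).

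For the quantitative bound, assume (5). Let $S_n$ be a sum of $n$ independent exponential random variables with parameters $\lambda_{n,1},\ldots,\lambda_{n,n}$, so that $\mathbb{E}S_n=s_n$ and $\mathrm{Var}(S_n)=\sum_i\lambda_{n,i}^{-2}\le\lambda_n^{-1}s_n$. Hence $\mathrm{Var}(S_n)/s_n^2\le 1/(s_n\lambda_n)\to 0$, so by Chebyshev $S_n/s_n\to 1$ in probability. Invoking the identity $d_{n,\textnormal{sep}}^{(c)}(t)=\mathbb{P}(S_n>t)$ from (\ref{eq-maxsep}), this translates directly to $T_{n,\textnormal{sep}}^{(c)}(\eta)/s_n\to 1$ for every $\eta\in(0,1)$.

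To transfer this to total variation I would use the Aldous--Fill bounds (\ref{eq-septv}). The direction $T_{n,\textnormal{\tiny TV}}^{(c)}(\epsilon)\le T_{n,\textnormal{sep}}^{(c)}(\epsilon)$ gives $\limsup T_{n,\textnormal{\tiny TV}}^{(c)}(\epsilon)/s_n\le 1$ for every $\epsilon\in(0,1)$. For the lower bound, pick $\epsilon_0\in(0,1/4)$; then $T_{n,\textnormal{sep}}^{(c)}(4\epsilon_0)\le 2T_{n,\textnormal{\tiny TV}}^{(c)}(\epsilon_0)$ yields $\liminf T_{n,\textnormal{\tiny TV}}^{(c)}(\epsilon_0)/s_n\ge 1/2$, and since (1) holds the TV cutoff gives $T_{n,\textnormal{\tiny TV}}^{(c)}(\epsilon)\sim T_{n,\textnormal{\tiny TV}}^{(c)}(\epsilon_0)$ for all $\epsilon\in(0,1)$, extending the liminf bound to every $\epsilon$. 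The only nontrivial obstacle is the factor-of-$2$ loss inherent in the Aldous--Fill passage from separation to TV, which is precisely what produces the gap between the limsup bound $1$ (sharp, from concentration) and the liminf bound $1/2$ in the statement.
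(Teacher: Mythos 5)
Your proposal is correct and follows essentially the same route as the paper: the equivalences come from Theorem \ref{t-bdc-cut2} (and its separation analogue) combined with the spectral bounds of Proposition \ref{p-tvsep}, and the quantitative $1/2$ and $1$ constants come from the same one-sided Chebyshev estimate on the exponential-sum representation of $d_{\textnormal{sep}}^{(c)}$ together with (\ref{eq-septv}), sharpened under (5) by $\mathrm{Var}(S_n)\le s_n/\lambda_n=o(s_n^2)$. The paper states this proof only as a one-line reference to those results; your write-up merely makes explicit the concentration step that yields the asymptotic constants, which is exactly the intended argument.
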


The last result establishes a relation between the mixing time and birth and death rates. Consider an irreducible birth and death chain $(X_m)_{m=0}^\infty$ on $\{0,1,...,n\}$ with transition matrix $K$ and stationary distribution $\pi$. Let $N_t$ be a Poisson process of parameter 1 that is independent of $X_m$ and set, for $0\le i\le n$,
\[
 \tau_i:=\inf\{t\ge 0|X_{N_t}=i\}.
\]
Brown and Shao discuss the distribution of $\tau_i$ in \cite{BS87} and obtain the following result.
\[
 \mathbb{P}_0(\tau_n>t)=\sum_{j=1}^n\left(\prod_{k\ne j}\frac{\theta_k}{\theta_k-\theta_j}\right)e^{-\theta_jt},
\]
where $\mathbb{P}_i$ is the conditional probability given $X_0=i$ and $\theta_1,...,\theta_n$ are eigenvalues of the submatrix of $I-K$ indexed by $\{0,1,...,n-1\}$. Let $\mathbb{E}_i$ be the conditional expectation given $X_0=i$. Clearly, this implies $\mathbb{E}_0\tau_n=\sum_{j=1}^n1/\theta_j$. Note that $\mathbb{E}_0\tau_n$ can be formulated by the birth and death rates using the strong Markov property. This leads to
\begin{equation}\label{eq-passage}
 \mathbb{E}_0\tau_n=\sum_{j=1}^n\frac{1}{\theta_j}=\sum_{k=0}^{n-1}\frac{\pi([0,k])}{\pi(k)p_k},
\end{equation}
where $\pi(A):=\sum_{i\in A}\pi(i)$.

Fix $0\le i_0\le n$. By (\ref{eq-passage}), we have
\[
 \mathbb{E}_0\tau_{i_0}=\sum_{i=1}^{i_0}\frac{1}{\lambda_i'},\quad\mathbb{E}_n\tau_{i_0}=\sum_{i=1}^{n-i_0}\frac{1}{\lambda_i''},
\]
where $\lambda_1',...,\lambda_{i_0}'$ and $\lambda_1'',...,\lambda_{n-i_0}''$ are eigenvalues of the submatrices of $I-K$ indexed respectively by $\{0,...,i_0-1\}$ and $\{i_0+1,...,n\}$. Let $\bar{\lambda}_1\le\cdots\le\bar{\lambda}_n$ be a rearrangement of $\lambda'_1,...,\lambda_{i_0}',\lambda_1'',...,\lambda_{n-i_0}''$. Clearly, $\bar{\lambda}_1,...,\bar{\lambda}_n$ are eigenvalues of the submatrix obtained by removing the $i_0$-th row and the $i_0$-th column of $I-K$. Let $\lambda_1<\cdots<\lambda_n$ be nonzero eigenvalues of $I-K$. By Theorem 4.3.8 in \cite{HJ90}, we have $\bar{\lambda}_i\le\lambda_i\le\bar{\lambda}_{i+1}$ and this leads to
\[
 \sum_{i=2}^n\frac{1}{\bar{\lambda}_i}\le\sum_{i=1}^n\frac{1}{\lambda_i}\le\sum_{i=1}^n\frac{1}{\bar{\lambda}_i}=\sum_{k=0}^{i_0-1}\frac{\pi([0,k])}{\pi(k)p_k}+\sum_{k=i_0+1}^n\frac{\pi([k,n])}{\pi(k)q_k},
\]
where the first equality uses (\ref{eq-passage}). By Proposition \ref{p-tvsep}, we obtain, for $\epsilon\in(0,1)$,
\[
 T_{\text{\tiny TV}}^{(c)}(\epsilon)\le T_{\text{sep}}^{(c)}(\epsilon)\le\left(\frac{\sqrt{\epsilon}+\sqrt{1-\epsilon}}{\sqrt{\epsilon}}\right)\min_{0\le i\le n}\left\{\sum_{k=0}^{i-1}\frac{\pi([0,k])}{\pi(k)p_k}+\sum_{k=i+1}^n\frac{\pi([k,n])}{\pi(k)q_k}\right\}.
\]
The above discussion also holds in discrete time case with the assumption that $p_i+q_{i+1}\le 1$ for all $0\le i<n$. This includes the $\delta$-lazy chain for $\delta\in[1/2,1)$ and we apply it to get the following corollary.

\begin{cor}\label{c-tn}
Let $\mathcal{F}=\{(\Omega_n,K_n,\pi_n)|n=1,2,...\}$ be a family of irreducible birth and death chain in \textnormal{(\ref{eq-bdcfamily})} with birth, death and holding rates $p_{n,i},q_{n,i},r_{n,i}$. For $n\ge 1$, set
\[
 t_n=\min_{0\le i\le n}\left\{\sum_{k=0}^{i-1}\frac{\pi_n([0,k])}{\pi_n(k)p_{n,k}}+\sum_{k=i+1}^n\frac{\pi_n([k,n])}{\pi_n(k)q_{n,k}}\right\}.
\]
If $\mathcal{F}_c$ or $\mathcal{F}_\delta$ has a total variation cutoff, then, for $\epsilon\in(0,1)$ and $\delta\in[1/2,1)$,
\[
 \limsup_{n\ra\infty}\frac{T_{n,\textnormal{sep}}^{(c)}(\epsilon)}{t_n}\le 1,\quad
\limsup_{n\ra\infty}\frac{T_{n,\textnormal{sep}}^{(\delta)}(\epsilon)}{t_n}\le\frac{1}{1-\delta},
\]
and, for $\epsilon\in(0,1)$,
\[
 \limsup_{n\ra\infty}\frac{T_{n,\textnormal{\tiny TV}}^{(c)}(\epsilon)}{t_n}\le 1\quad
\limsup_{n\ra\infty}\frac{T_{n,\textnormal{\tiny TV}}^{(\delta)}(\epsilon)}{t_n}\le\frac{1}{1-\delta}.
\]
\end{cor}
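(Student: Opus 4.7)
The plan is to combine the explicit upper bound on the separation mixing time that is already derived in the paragraph preceding the corollary with the cutoff hypothesis, and then to sharpen the resulting $\epsilon$-dependent constant by letting the accuracy parameter tend to one. Concretely, the discussion built on Proposition~\ref{p-tvsep} together with the eigenvalue interlacing bound $\sum_{i=1}^n1/\lambda_i\le\sum_{i=1}^n1/\bar{\lambda}_i$ already shows, for the continuous time chain,
\[
 T_{n,\textnormal{sep}}^{(c)}(\eta)\le\frac{\sqrt{\eta}+\sqrt{1-\eta}}{\sqrt{\eta}}\,t_n,\qquad\forall\,\eta\in(0,1).
\]
Applying the same reasoning to $K_{n,\delta}$, whose birth and death rates are $(1-\delta)p_{n,i}$ and $(1-\delta)q_{n,i}$ and which satisfies $p_i^{(\delta)}+q_{i+1}^{(\delta)}\le 1$ as soon as $\delta\in[1/2,1)$, rescales $t_n$ by $1/(1-\delta)$, giving
\[
 T_{n,\textnormal{sep}}^{(\delta)}(\eta)\le\frac{\sqrt{\eta}+\sqrt{1-\eta}}{(1-\delta)\sqrt{\eta}}\,t_n,\qquad\forall\,\eta\in(0,1),\,\delta\in[1/2,1).
\]

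Next I would invoke Theorem~\ref{t-bdc-sep2}: the total variation cutoff hypothesis implies that both $\mathcal{F}_c$ and $\mathcal{F}_\delta$ (for $\delta\in[1/2,1)$) present a separation cutoff. By the very definition of a cutoff, $T_{n,\textnormal{sep}}^{(c)}(\epsilon)\sim T_{n,\textnormal{sep}}^{(c)}(\eta)$ and $T_{n,\textnormal{sep}}^{(\delta)}(\epsilon)\sim T_{n,\textnormal{sep}}^{(\delta)}(\eta)$ for all $\epsilon,\eta\in(0,1)$. Evaluating the bounds above at $\eta$ and dividing by $t_n$ therefore gives
\[
 \limsup_{n\ra\infty}\frac{T_{n,\textnormal{sep}}^{(c)}(\epsilon)}{t_n}\le\frac{\sqrt{\eta}+\sqrt{1-\eta}}{\sqrt{\eta}},\qquad \limsup_{n\ra\infty}\frac{T_{n,\textnormal{sep}}^{(\delta)}(\epsilon)}{t_n}\le\frac{\sqrt{\eta}+\sqrt{1-\eta}}{(1-\delta)\sqrt{\eta}}.
\]
Since the left-hand sides do not depend on $\eta$, I may let $\eta\ua 1$; the ratio $(\sqrt{\eta}+\sqrt{1-\eta})/\sqrt{\eta}$ tends to $1$, and the two announced separation bounds $\le 1$ and $\le 1/(1-\delta)$ fall out.

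The total variation statements then follow at once from the comparison $T_{\textnormal{\tiny TV}}^{(\cdot)}(\epsilon)\le T_{\textnormal{sep}}^{(\cdot)}(\epsilon)$ recorded in \eqref{eq-septv}; the continuous time bound transfers to all $\delta\in(0,1)$ via Theorem~\ref{t-cutcomp} (which under cutoff gives $T_{n,\textnormal{\tiny TV}}^{(\delta)}(\epsilon)\sim T_{n,\textnormal{\tiny TV}}^{(c)}(\epsilon)/(1-\delta)$), so no restriction $\delta\ge 1/2$ is actually needed in the total variation part.

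I do not anticipate a real obstacle: the only non-routine move is the $\eta\ua 1$ limit that converts the $\epsilon$-dependent Chebyshev constant into the sharp constant $1$, and this is legitimate precisely because the cutoff hypothesis collapses all $T_{n,\textnormal{sep}}^{(\cdot)}(\eta)$ to the same leading order. The one detail to keep straight is that the separation half of the argument really does require $\delta\in[1/2,1)$, both so that Proposition~\ref{p-tvsep} applies to $K_{n,\delta}$ and so that Theorem~\ref{t-bdc-sep2} produces a separation cutoff in the lazy case.
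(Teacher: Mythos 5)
Your argument is correct and is essentially the paper's intended one: the corollary is meant as a direct application of the displayed bound $T_{n,\mathrm{sep}}^{(c)}(\eta)\le\frac{\sqrt{\eta}+\sqrt{1-\eta}}{\sqrt{\eta}}\,t_n$ and its lazy analogue with $t_n/(1-\delta)$ (valid for $\delta\in[1/2,1)$), combined with Theorem \ref{t-bdc-sep2} to get separation cutoffs, the identification of mixing times at different levels under cutoff, and the limit $\eta\uparrow 1$ to turn the Chebyshev constant into $1$; the transfer of the total variation bound to all $\delta\in(0,1)$ via Theorem \ref{t-cutcomp} is also as intended. The only step to make explicit is that "$T_{n,\mathrm{sep}}^{(\delta)}(\epsilon)\sim T_{n,\mathrm{sep}}^{(\delta)}(\eta)$" is the ratio form of cutoff, which for the discrete-time families requires diverging mixing times (see the remark after Definition \ref{d-cutoff}); this holds here because the cutoff hypothesis gives $s_n\lambda_n\to\infty$ (Theorem \ref{t-mixingtvsep3}), hence $s_n\to\infty$ since $\lambda_n\le 2$, and the lower bounds in Proposition \ref{p-tvsep} then force the separation and total variation mixing times to tend to infinity.
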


\begin{rem}
In \cite{CSal12-3}, the constant $t_n$ in Corollary \ref{c-tn} is proved to be of the same order as the constant $s_n$ in Theorem \ref{t-mixingtvsep3} and the following term
\[
 \sum_{k=0}^{i_n-1}\frac{\pi_n([0,k])}{\pi_n(k)p_{n,k}}+\sum_{k=i_n+1}^n\frac{\pi_n([k,n])}{\pi_n(k)q_{n,k}},
\]
where $i_n$ satisfies $\pi_n([0,i_n])\ge 1/2$ and $\pi_n([i_n,n])\ge 1/2$.
\end{rem}
\begin{rem}
The bound in Corollary \ref{c-tn} is also be obtained implicitly in \cite{DLP10} using a coupling argument.
\end{rem}

\section*{Acknowledgements}
We thank the anonymous referee for her/his very careful reading of the manuscript.

\bibliographystyle{plain}
\bibliography{reference}

\end{document}